\newtheorem{theorem}{Theorem}[section]
\newtheorem{lemma}[theorem]{Lemma}
\newtheorem{claim}{Claim}
\newtheorem{fact}[theorem]{Fact}
\newtheorem{proposition}[theorem]{Proposition}
\newtheorem{corollary}[theorem]{Corollary}
\theoremstyle{definition} 
\newtheorem{definition}[theorem]{Definition}
\newtheorem{example}{Example}
\newtheorem{counterexample}[example]{Counterexample}
\newtheorem{examples}{Examples}
\newtheorem{remarks}[theorem]{Remarks}
\newtheorem{remark}[theorem]{Remark}
\newtheorem{open}{Open Question}
\newcommand{\cx}{\mbox{\bf cx}}
\renewcommand{\c}{\mbox{\bf c}}
\newcommand{\sx}{{\bf sx}}
\newcommand{\chief}{{\bf chief}}
\newcommand{\Ax}{{\bf Axioms}}
\newcommand{\Fit}{{\bf Fit}}
\newcommand{\Der}{{\bf Der}}
\newcommand{\Solv}{{\bf Solv}}
\newcommand{\Nil}{{\bf Nil}}
\newcommand{\solv}{{\bf solv}}
\newcommand{\JH}{{\bf JH}}
\newcommand{\soc}{\mbox{\rm soc}}
\newcommand{\cxsolv}{\cx_{solv}}
\newcommand{\Z}{\mathbb{Z}}
\newcommand{\sur}{\twoheadrightarrow}
\newcommand{\N}{\mathbb{N}}
\newcommand{\1}{{\mathbf 1}}
\renewcommand{\S}{{\mathscr{S}}}
\newcommand{\SNAG}{{\mathscr{SNAG}}}
\newcommand{\SIMPLE}{{\mathscr{SIMPLE}}}
\newcommand{\surS}{{\bf sur}}
\newcommand{\subS}{{\bf sub}}
\newcommand{\V}{{\bf V}}
\newcommand{\n}{{\bf n}}
\newcommand{\Span}{Span}
    \title{Hierarchical Complexity of Finite Groups}
    \author{Chrystopher L. Nehaniv}
    \thanks{This work was supported by the Natural Sciences and Engineering Research Council of Canada (NSERC), funding reference number RGPIN-2019-04669. 
Cette recherche a \'et\'e financ\'ee par le Conseil de recherches en sciences naturelles et en g\'enie du Canada (CRSNG), num\'ero de r\'ef\'erence RGPIN-2019-04669.\\ The author also is grateful to Mr.\ Thomas George, recipient of a University of Waterloo President's Award as undergraduate research assistant, for assistance with constructing software tools with which the counterxample groups in this paper were initially found and studied by the author.}
    \address{University of Waterloo,  Ontario, Canada }
    \email{chrystopher.nehaniv@uwaterloo.ca}
\begin{document}

    \maketitle
 \begin{quote}
   {\small  {\it  In any field of mathematics, complexity is first level of sophistication beyond knowing the building blocks.} 
     --John Rhodes}
 \end{quote}
\begin{abstract} 
 What are simplest ways to construct a finite group from its atomic constituents?   To understand part-whole relations between finite simple groups (``atoms'') and the global structure of finite groups, we axiomatize complexity measures on finite groups.  
From the Jordan-Hölder theorem and Frobenius-Lagrange embedding in an iterated wreath product, any finite group $G$ can be constructed from a unique collection of simple groups, its Jordan-H\"{o}lder factors, each with well-defined multiplicities through iterated extension by simple groups.  What is the least number of levels needed in such a hierarchical construction if a level is allowed to include several of these atomic pieces? 
To pose and answer this question rigorously, we give a natural set of hierarchical complexity axioms for finite groups relating to constructability, extension, quotients, and products, and prove these axioms are satisfied by a unique maximal complexity function $\cx$. 
We prove this function is the same as the minimal number of ``spans of gems'' (direct products of simple groups) in a subnormal series with all factors of this type. Hierarchical complexity $\cx$ is thus effectively computable, and bounded below by all other complexity measures satisfying the axioms, including generalizations of derived length, Fitting height and solvability.  Also, the hierarchical complexity of a normal subgroup is bounded above by the complexity of the whole group, although this is not assumed in the axioms and does not follow from them.

For solvable groups, the unique maximal group complexity measure defined satisfying the axioms agrees with the restriction of the one for all finite groups, and in addition satisfies an embedding axiom - which decidedly cannot be applied in the general case of all finite groups.  In both cases, the complexity of a group is bounded above and below by various natural functions. In particular, hierarchical complexity is sharply bounded above by socle length, which yields a canonical decomposition and satisfies all the axioms except the extension axiom. 
 Examples illustrate applications of the bounds and axiomatic methods in determining complexity of groups. We show also that minimal decompositions need not be unique in terms of what components occur nor their ordering. 

The complexity axioms are also shown to be independent. 
\end{abstract}

\newpage

\tableofcontents
\newpage

\section{Introduction}\label{sec_intro}

\subsection{Motivation and Brief Overview}
This article follows the viewpoint of Rhodes~\cite{wildbook} on complexity of mathematical objects, developing it for finite groups using a global, axiomatic approach.
Such an axiomatic approach complements the study of extensions of finite groups initiated by Otto H\"older in the 19th century which attends to the local process of putting together groups from simpler parts.
Such complexity theories develop the study of part-whole relations and provide a global hierarchical picture for understanding how simple building block components of
 objects may be put together to constituent the whole (albeit perhaps non-uniquely).  
Previously such an axiomatic complexity theory has been developed by Krohn, Rhodes, Tilson, Eilenberg, Steinberg and many others
for  finite semigroups and finite automata  (\cite{KrohnRhodes-PNAS}, \cite{EilenbergVolB}, \cite{TilsonInEilenbergB}, \cite{BioCpx},\cite[Ch.~4]{qtheory})  and for finite aperiodic semigroups~\cite{aperiodicCpx}. 
Remarkably, although these areas are close to finite group theory and their axioms are abstracted from the research practices of group theory \cite[Ch.~2-3]{wildbook}, such a complexity theory for all finite groups has not been developed.\footnote{The treatment of subgroups (or equivalently embedding) presents a subtle point (cf.~Section~\ref{SubgroupSec} for this ``Michelangelo Problem''), that is not an issue for solvable groups. It turns out we can take an agnostic approach that avoids any axiom mentioning the complexity of subgroups or complexity of normal subgroups and still develop an axiomatic theory that reveals how to handle this issue.  As a consequence of this approach, it naturally follows that maximal complexity function and other measures satisfy the Normal property, i.e., that a normal subgroup has complexity no more than the complexity of the whole -- although, as we show, this statement too is independent of the other axioms!} 
An appropriate formulation of the complexity axioms in the context of finite group theory allows us to do so here, at least some intitial steps.  

We axiomatize hierarchical complexity of finite group and study the functions satisfy the axioms and subsets of them.  The axioms are shown to be independent and there is a unique maximal complexity function $\cx$ on finite groups satisfying them.  Certain well-known functions are also complexity functions on finite or finite solvable groups (Fitting height, derived length and others), which are lower bounds to $\cx$. 
The socle length is closely related to $\cx$, but shown to be a sharp upper bound and fails one of the axioms. \\

\subsection{Complexity Axioms for Finite Groups}\label{CpxAxioms}
We consider functions $\c$ from the class of finite groups to the natural numbers satisfying the following natural complexity axioms\footnote{All groups considered here are finite. } :

\begin{enumerate}
\item Extension.  If   $N$ is normal in $G$, then $\c(G) \leq \c(N) + \c(G/N)$.
\item Quotient. 
If a homomorphism maps $G$ onto $H$, then $\c(G)\geq \c(H)$.
\item Normal.  If $N$ is normal in $G$, $\c(N) \leq \c(G)$.
\item Constructability from building blocks.   Each group $G$ either has complexity at most 1 or can be constructed by iterated extension from groups with complexity at most 1.   

\item  Product.    If $G= H \times K$,  then $\c(G)=\max({\c(H), \c(K)})$.
\item Initial Condition. The complexity of the trivial group $\1$ is 0, i.e. $\c(\1)=0$

\end{enumerate}

That ``the constituent parts are not more complex than the whole'' is captured by the quotient and normal properties. 

We say a group $G$ is an {\em extension of $Q$ by $N$} if $N$ is isomorphic to a normal subgroup of  $G$ and $G/N$ is isomorphic to $Q$.
The extension axiom expresses that the complexity of group is bounded by a simple function of the complexity of  component parts of the group. 

Since isomorphic groups map onto each other, any $\c$ satisfying the quotient axiom must assign them the same value.
So we may regard such a $\c$ as a function on the set of isomorphism classes of finite groups.  The quotient axioms says that a homomorphic image of a group is no more complex than the group - this is also called ``the mapping axiom''. 

Another immediate consequence
of the quotient axiom is:

(2$^\prime$). If $N$ is normal in $G$, $\c(G/N) \leq \c(G) $. \\
For functions that are well-defined on isomorphism classes of finite groups, (2$^\prime$) is equivalent to the quotient axiom (2) since for $\varphi: G\sur H$,  by the first isomorphism theorem $H\cong G/N$ for $N=\ker \varphi$.  Hence, (2$^\prime$) plus well-definedness on isomorphism classes may be used interchangably with (2). 

The product axiom captures the fact that direct product of factors does not combine them in a hierarchical way, but in parallel,  which is not more complex than the independent factors.

Precisely, the constructability axiom says each $G$ is {\em constructable from groups of complexity at most 1}, i.e., either $\c(G)\leq 1$ or $G$ is an extension of of $Q$ by $N$ where $Q$ and $N$ are constructable from groups of complexity at most 1.

Recall that if $G$ is an extension of $Q$ by $N$ then $G$ is isomorphic to a subgroup of the wreath product $N\wr Q$ which acts on the set $N\times Q$  (e.g., Frobenius-Lagrange coordinates \cite[pp.~29--33]{baumslag},  \cite[Theorem~1.17]{AutomataNetworks},  Kalu\v{z}hnin-Krasner Universal Embedding Theorem \cite{KrasnerKaloujnine}, \cite[Theorem 2.6A]{DixonMortimer}).\footnote{Note that 
we follow  \cite{wildbook,qtheory,AutomataNetworks} for basic results and notation for wreath products: 
Recall the (permutational) wreath product $W= N\wr Q$ is the group of mappings on $N\wr Q$ of the form $w=(w_2,w_1)$ where
$(n,q)w=(n w_2(q), q w_1)$ for some $w_2: Q\rightarrow N$ and $w_1\in Q$. Then $W$ is a semidirect product $N^Q \rtimes Q$.
Similarly, the $n$-fold wreath product of $G_n, \ldots, G_1$ is $W= G_n \wr \cdots \wr G_1$,  consisting of functions
$w$ on the set $G_n\times \cdots \times G_1$ of the form $w=(w_n, \ldots, w_1)$ with $w_i: G_{i-1}\times \cdots \times G_1 \rightarrow G_i$ ($1\leq i \leq n)$, so  $w_1\in G_1$, with  $(g_n, \ldots , g_1) w= (g_n w_n(g_{n-1},\ldots, g_2 w_2(g_1), g_1 w_1)$ ($g_i \in G_i$).  
More generally, if $G_i \leq Sym(X_i)$, the symmetric group on set $X_i$, we write
$(X_n, G) \wr \cdots \wr (X_1, G_1)$ for the permutation group $W$ acting on $X_n\times \cdots \times X_1$ consisting of all $w=(w_n, \ldots, w_1)$ with $w_i: X_{i-1}\times \cdots \times X_1 \rightarrow G_i$ ($1\leq i \leq n)$,  $w_1\in G_1$, with  
$$(x_n, \ldots , x_1) w= (x_n w_n(x_{n-1},\ldots, x_2 w_2(g_1), x_1 w_1),$$ $x_i\in X_i, g_i\in G_i$. Thus $G_n \wr \cdots \wr G_1$ denotes $(G_n,G_n) \wr \cdots \wr (G_1,G_1)$, where each $G_i$ acts on itself by right multiplication.}

Therefore,  the \underline{constructability axiom} may be restated as follows:\\

(4$^\prime$) Each group $G$ has a subnormal series 
$$\1 =G_n \lhd G_{n-1}  \lhd\cdots \lhd G_0=G, \mbox{  with  $\c(G_{i}/G_{i+1}) \leq 1$},$$
 for each $0\leq i < n$ and some $n\geq 0$.\\
 
\noindent
\underline{Remarks and Questions on Global Structure}: As the iterated (permutational) wreath product is associative, we may `parenthesize' the quotients $G_{i+1}/G_i$ ($0 \leq i < n$), corresponding to a subnormal series for a given group $G$ arbitrarily in constructing the embedding of $G$ in the iterated wreath product of these quotients:  $$G \leq G_{n-1}/G_{n} \wr \cdots \wr G_0/G_{1}.$$
Observe that the Jordan-H\"older factors of $G$ are the union of the Jordan-H\"older factors for $G_{i}/G_{i+1}$ with the same multiplicities. If each quotient $G_{i}/G_{i+1}$ has complexity at most 1, 
then $n$  bounds the complexity of $G$  above (by the extension axiom).
{\em Therefore, determining complexity of $G$ is solved if we determine how to `pack' the simple pieces of $G$ (its Jordan-H\"older factors) into these quotient groups  with a shortest hierarchical decomposition using the wreath product, giving the global structure of how the parts can be put together to build the whole.}\footnote{The embedding a group   using iterated extensions into the wreath product constitutes a so-called a `cascade product' of the factors synthesizing the group from these constituents~\cite{cascadeproduct} and is supported by computer algebraic tools of the {\sc SgpDec} package for {\sc GAP}~\cite{SgpDec}.}

Is such a minimal decomposition of $G$ into constituent components canonical? 
Are the factors $G_{i}/G_{i+1}$ constituting $G$ for a minimal decomposition unique (up to isomorphism)?  Is their order and manner of putting them unique?   We shall see that for all finite groups there is a canonical 
decomposition via the socle series that sharply bounds complexity above, but that for hierarchical complexity itself neither the constituents nor their ordering are unique, as we show by examples. 
On the other hand, for solvable groups, both the Fitting series and derived series give lower bounds with canonical constituents.\\

\noindent
\underline{Remarks on Axioms, Independence and Bounds}:
Hierarchical complexity will be constructed and shown to be the unique maximal complexity function satisfying the axioms. 
Therefore by constructing many complexity functions and functions the fail a single axiom or property, we shall be able to use them to show lower and upper bounds, respectively, for the hierarchical complexity on all groups and particular classes of groups as well as in concrete cases of computing complexity. This will also allow us to show that the complexity axioms themselves together with the normal subgroup property are logically independent of one another. 

Moreover, the study and systematization of the properties of many well-known functions (such as nilpotency class,  derived length, Fitting height, socle length, minimal number of generators, exponent,  etc., - sometimes called {\em arithmetic functions}) on groups and subclasses of groups is achieved by placing them in the context of the  complexity axioms allowing  to be related and compared in a larger framework.

\begin{lemma}\label{simplecpx}
Let $S$ be a finite simple group and $\c$
 a function on finite groups that satisfies the constructability axiom. 
 Then $\c(S)\leq 1$. 
 \end{lemma}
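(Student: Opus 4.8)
The plan is to argue directly from the constructability axiom (axiom 4, equivalently 4$^\prime$), using minimality of a subnormal series to force a simple group down to complexity at most $1$. Suppose $S$ is a finite simple group and $\c$ satisfies constructability. If $\c(S) \le 1$ we are done, so assume for contradiction that $\c(S) \ge 2$. Then $S$ is not itself of complexity at most $1$, so by the constructability axiom $S$ must be built by iterated extension from groups of complexity at most $1$; equivalently, by (4$^\prime$), there is a subnormal series $\1 = G_n \lhd G_{n-1} \lhd \cdots \lhd G_0 = S$ with $\c(G_i/G_{i+1}) \le 1$ for each $i$, and we may take one with $n \ge 1$ minimal.

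The key observation is that $S$ is simple, so it has no proper nontrivial normal subgroups. In the series $\1 = G_n \lhd \cdots \lhd G_1 \lhd G_0 = S$, the subgroup $G_1$ is normal in $G_0 = S$, hence $G_1 = \1$ or $G_1 = S$. The minimality of the series (or just the fact that all the $G_i$ between $G_1$ and $G_n$ collapse if $G_1 = \1$) rules out $G_1 = S$ unless the series is trivial; more carefully, if $G_1 = S$ then we could have started the series at $G_1$, contradicting minimality of $n$, so $G_1 = \1$. But then $n = 1$ and the series is just $\1 = G_1 \lhd G_0 = S$, whose single factor is $G_0/G_1 = S/\1 \cong S$. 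The constructability axiom (in the form 4$^\prime$) requires $\c(G_0/G_1) \le 1$, i.e.\ $\c(S) \le 1$, contradicting our assumption $\c(S) \ge 2$.

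An alternative, perhaps cleaner, phrasing avoids the explicit minimality argument: since $S$ is simple, any subnormal series from $\1$ to $S$ must, reading from the top, have its first proper term equal to $\1$ (as that term is normal in $S$ and $S$ is simple, and it is a proper subgroup because the series is strictly descending and has length at least one when $\c(S) > 1$). Hence the only subnormal series witnessing constructability of $S$ is the length-one series with factor $S$ itself, which by axiom (4$^\prime$) forces $\c(S) \le 1$. I would present whichever version reads most smoothly in context; the content is identical.

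I do not anticipate a genuine obstacle here — the statement is essentially immediate from the simplicity of $S$ together with the reformulation (4$^\prime$) of the constructability axiom. The one point requiring a little care is to make sure the reformulation (4$^\prime$) is being used correctly: constructability allows a group of complexity $\le 1$ to bypass the series requirement, but for a simple group the series (when needed) is forced to be trivial with the single factor $S$, so either way $\c(S)\le 1$. I would state this cleanly and keep the proof to a few lines.
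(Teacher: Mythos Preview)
Your argument is correct and is essentially the same as the paper's, which dispatches the lemma in one line by invoking the equivalent form (4$^\prime$) and noting that a simple group admits only the length-one subnormal series $\1 \lhd S$, forcing $\c(S)\le 1$. Your minimality argument is a perfectly valid elaboration of that observation; the only caveat is that in your alternative phrasing the series from (4$^\prime$) is not assumed strictly descending, so rather than invoking strict descent you should argue (as your first version does) that repeated terms can be stripped, or equivalently that the first index $i$ with $G_i=\1$ gives $G_{i-1}/G_i\cong S$.
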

 \begin{proof} This follows from (4$^\prime$), since each simple group has a unique subnormal series of length 1. \end{proof}
Conversely, 
\begin{lemma}\label{constru->simple1}
If a function on finite groups $\c$ satisfies $\c(S)\leq 1$ for every simple group $S$, then
$\c$ satisfies the constructability axiom. 
\end{lemma}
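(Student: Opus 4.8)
The plan is to prove the statement by induction on the order of the group $G$, using the Jordan-Hölder theorem to extract a single simple quotient and then apply the inductive hypothesis to the kernel.

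First I would dispose of the trivial case: if $G = \1$, then $G$ has the empty subnormal series (i.e., $n = 0$ in the formulation of $(4')$), so there is nothing to check. Now suppose $G \neq \1$. By finiteness, $G$ has a maximal proper normal subgroup $N \lhd G$, so that $S := G/N$ is a (nontrivial) finite simple group. By the hypothesis on $\c$, we have $\c(S) = \c(G/N) \leq 1$. Since $|N| < |G|$, the inductive hypothesis gives a subnormal series $\1 = N_m \lhd N_{m-1} \lhd \cdots \lhd N_0 = N$ with $\c(N_i / N_{i+1}) \leq 1$ for all $i$. Prepending $G$ to this series yields
$$\1 = N_m \lhd N_{m-1} \lhd \cdots \lhd N_0 = N \lhd G,$$
and all successive quotients have $\c$-value at most $1$: the new top quotient is $G/N = S$, handled above, and the rest are the $N_i/N_{i+1}$. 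This is exactly a series as demanded by $(4')$, so $\c$ satisfies the constructability axiom.

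There is essentially no obstacle here; the only point requiring a moment's care is the base case and the bookkeeping that $(4')$ allows series of length $n = 0$ for the trivial group (matching $\c(\1)$, though note the constructability axiom as stated does not itself constrain $\c(\1)$). One should also note that the argument uses nothing about $\c$ beyond the stated bound on simple groups — in particular neither the quotient axiom nor well-definedness on isomorphism classes is needed, so the lemma is genuinely a converse to Lemma~\ref{simplecpx} in the stated generality. If one prefers to avoid induction, the same conclusion follows immediately by taking any composition series $\1 = G_n \lhd \cdots \lhd G_0 = G$ of $G$ (which exists since $G$ is finite): each factor $G_i/G_{i+1}$ is simple, hence has $\c$-value at most $1$ by hypothesis, and this is a series of the form required by $(4')$. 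I would likely present this non-inductive version as the cleaner proof, relegating the inductive phrasing to a remark.
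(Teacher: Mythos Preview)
Your proof is correct and is essentially the same as the paper's: the paper also observes that a composition series for $G$ has simple quotients, each of which satisfies $\c(S)\leq 1$ by hypothesis, so $(4')$ holds. Your inductive phrasing via a maximal normal subgroup is just an unrolled version of the same argument, and your preferred non-inductive version matches the paper's exactly.
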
 \begin{proof} 
Since every finite group $G$ is either simple or has a maximal normal subgroup, $G$ can be constructed by iterated extension from finite simple groups.  In particular, $G$ is isomorphic to a subgroup of the iterated wreath product of its Jordan-H\"older factors (e.g.\ \cite[Theorem 1.17]{AutomataNetworks}). Alternatively, since each of the quotients  $S$ in a composition series for $G$ is a simple group,
constructability in the form (4$^\prime$) follows from the hypothesis that $\c(S)\leq 1$ for each of these. \end{proof}
It is now immediate that 
the constructability axiom is equivalent to the complexity of simple groups being bounded above by 1:
\begin{lemma}\label{construct-simple}
Let $\c$ be a function from finite groups to $\N$.  Then:\\
$\c$ satisfies  the constructability axiom if and only if $\c(S)\leq 1$ for all $S$ simple. 
\end{lemma}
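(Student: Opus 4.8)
The plan is to observe that this biconditional is precisely the conjunction of Lemmas~\ref{simplecpx} and~\ref{constru->simple1}, so essentially nothing new is required. For the forward implication, assume $\c$ satisfies the constructability axiom; then Lemma~\ref{simplecpx} already gives $\c(S)\le 1$ for every finite simple group $S$. For the reverse implication, assume $\c(S)\le 1$ for all simple $S$; then Lemma~\ref{constru->simple1} already gives that $\c$ satisfies the constructability axiom. Putting the two together yields the stated equivalence, and this is the proof I would write.

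If a self-contained argument is preferred, I would proceed directly. For $(\Leftarrow)$: by the Jordan--H\"older theorem every finite group $G$ admits a composition series $\1=G_n\lhd G_{n-1}\lhd\cdots\lhd G_0=G$ in which each factor $G_i/G_{i+1}$ is simple, hence has $\c(G_i/G_{i+1})\le 1$ by hypothesis; this is exactly a witness for the reformulation $(4')$ of the constructability axiom, so $\c$ is constructable. For $(\Rightarrow)$: let $S$ be a finite simple group and let $\1=G_n\lhd\cdots\lhd G_0=S$ be a subnormal series witnessing $(4')$. Since the only normal subgroups of $S$ are $\1$ and $S$, a downward induction shows each $G_i$ equals $\1$ or $S$; taking the largest index $j$ with $G_j=S$ we get $G_{j+1}=\1$, so $G_j/G_{j+1}\cong S$ and therefore $\c(S)\le 1$.

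I do not expect any genuine obstacle here: the statement is a packaging of the two immediately preceding lemmas. The only points needing a moment's care are to invoke the equivalent form $(4')$ of the constructability axiom (legitimate because of the Frobenius--Lagrange embedding recalled above) and to use the standard convention that a simple group is nontrivial, so that the single composition factor of a simple group is the group itself, which is what forces $\c(S)\le 1$ in the $(\Rightarrow)$ direction.
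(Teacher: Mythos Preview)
Your proposal is correct and matches the paper's approach exactly: the paper simply remarks that the equivalence is immediate from Lemmas~\ref{simplecpx} and~\ref{constru->simple1}, which is precisely your first paragraph. Your optional self-contained argument is also fine and just unpacks those two lemmas.
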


\noindent
{\em{\underline{\bf Important Note:}} In the sequel, we shall \textbf{not} assume the Normal property~(3) hold for our complexity functions. Nevertheless, it will be shown to hold for the unique maximal complexity function $\cx$ on finite groups and related measures. }

\section{The Unique Maximal Complexity Function}

By the constructability axiom,  each simple group has complexity at most one. 
The following is clear.
\begin{lemma}\label{trivial}
(1) The trivial complexity function $z$ that assigns to each group the number zero satisfies the complexity axioms.\\
(2) 
The function $\delta$ with $\delta(1)=0$ for the trivial group, and $\delta(G)=1$ for $G$ a nontrivial finite group satisfies the complexity axioms.
\end{lemma}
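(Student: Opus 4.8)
The plan is to verify each of the six axioms directly for both functions, exploiting the fact that $z$ and $\delta$ depend only on whether a group is trivial. For the trivial function $z$ every axiom degenerates: Extension reads $0 \le 0 + 0$, Quotient reads $0 \ge 0$, Normal reads $0 \le 0$, Product reads $0 = \max(0,0)$, Initial Condition reads $z(\1) = 0$, and Constructability holds because $z(S) = 0 \le 1$ for every simple $S$, invoking Lemma~\ref{construct-simple}. So part~(1) is immediate.

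For $\delta$ I would first record the elementary closure facts that the trivial group cannot be built from nontrivial pieces: every quotient, every normal subgroup, and every direct factor of $\1$ is again $\1$, and if $N \lhd G$ with both $N$ and $G/N$ trivial then $G = \1$. With these in hand each axiom is a short case split on triviality. Initial Condition is $\delta(\1) = 0$. For Quotient, if $G \sur H$ with $H$ nontrivial then $G$ is nontrivial and $\delta(G) = 1 = \delta(H)$, while if $H = \1$ the inequality $0 \le \delta(G)$ is clear; Normal is the same argument with $N$ in place of $H$. For Product, if $G = H \times K$ is nontrivial then some factor is nontrivial, so $\max(\delta(H),\delta(K)) = 1 = \delta(G)$, and otherwise both factors are trivial. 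For Extension, if $G$ is nontrivial then $N$ or $G/N$ is nontrivial, hence $\delta(N) + \delta(G/N) \ge 1 = \delta(G)$; if $G = \1$ the right side is $0$. Finally $\delta(S) = 1 \le 1$ for simple $S$, so Constructability follows from Lemma~\ref{construct-simple}.

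There is no real obstacle here: the lemma is a consistency check showing the axiom system admits at least two distinct models, a fact to be used later in the independence results. The only step that benefits from a second look is Extension for $\delta$, where one notes that $\delta(N) + \delta(G/N)$ fails to dominate $\delta(G) = 1$ only when both summands vanish, i.e.\ when $N$ and $G/N$ are both trivial, which forces $G = \1$ and contradicts the assumption that $G$ is nontrivial.
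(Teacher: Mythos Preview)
Your proposal is correct and is essentially the direct verification the paper leaves implicit: the paper offers no proof beyond ``The following is clear,'' and your axiom-by-axiom check for both $z$ and $\delta$, together with the appeal to Lemma~\ref{construct-simple} for Constructability, is exactly the routine that makes this clear.
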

So complexity functions exist.

A complexity function $\c$ is {\em maximal} if it pointwise dominates any other one $\c'$ satisfying the axioms, i.e.,
$\c(G) \geq \c'(G)$ for all finite groups $G$.   If $\c$ and $\c'$ are both maximal, then $\c(G) \geq \c'(G)$ and $\c'(G) \geq \c(G)$, whence $\c=\c'$. Thus, if there is a maximal complexity function, it is unique.

\subsection{The Jordan-Hölder Function}

Let $\JH(G)$ be the number of Jordan-Hölder factors of a finite group $G$ counting multiplicities.  

\begin{lemma} \label{JH}
\begin{enumerate}
\item $\JH$ satisfies the constructability, quotient, normal subgroup, initial condition, and extension axioms, but not the product axiom.     
\item If $\c$ satisfies the axioms except possibly the product axiom, then $\c(G)\leq \JH(G)$ for any finite group. \label{DomByJH} 
\item $\JH$ is the unique maximal function on groups satisfying the axioms except possibly  the product axiom. 
\end{enumerate}
\end{lemma}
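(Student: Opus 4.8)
The plan is to prove the three parts in order, with part~(1) feeding directly into parts~(2) and~(3).

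For part~(1), I would verify each axiom directly from the Jordan--Hölder theorem, the one essential ingredient being the additivity of composition length: if $N$ is normal in $G$, then the multiset of composition factors of $G$ is the disjoint union of those of $N$ and those of $G/N$ (apply the Schreier refinement theorem to the series $\1 \lhd N \lhd G$), so $\JH(G) = \JH(N) + \JH(G/N)$. From this the extension axiom holds with equality, hence a fortiori as the required inequality; the normal axiom follows since $\JH(N) \le \JH(N) + \JH(G/N)$; and the quotient axiom follows by writing any surjection $G \sur H$ as $G \sur G/\ker$ and using $\JH(H) = \JH(G/\ker) \le \JH(G)$. The initial condition is immediate since $\1$ has empty composition series, and constructability holds by Lemma~\ref{construct-simple} because $\JH(S) = 1$ for every simple $S$. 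Finally the product axiom fails: for any simple group $S$ we have $\JH(S \times S) = 2 > 1 = \max(\JH(S),\JH(S))$.

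For part~(2), I would induct on $|G|$ (equivalently on $\JH(G)$). If $G = \1$ then $\c(G) = 0 = \JH(G)$ by the initial condition. If $G$ is simple then $\c(G) \le 1 = \JH(G)$ by Lemma~\ref{simplecpx}, which uses only constructability. Otherwise choose a maximal normal subgroup $N \lhd G$, so $G/N$ is simple, $\JH(G/N) = 1$, $\JH(N) = \JH(G) - 1$, and $|N| < |G|$; then the extension axiom gives $\c(G) \le \c(N) + \c(G/N)$, and the inductive hypothesis together with the simple case gives $\c(N) + \c(G/N) \le \JH(N) + 1 = \JH(G)$. Part~(3) is then purely formal: part~(1) exhibits $\JH$ as a function satisfying all the axioms except possibly the product axiom, part~(2) shows it pointwise dominates every such function and hence is maximal among them, and uniqueness of a maximal function was already observed (two maximal functions each dominate the other).

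I do not anticipate a genuine obstacle here; the only point demanding a little care is the multiplicity bookkeeping behind $\JH(G) = \JH(N) + \JH(G/N)$, which is precisely the content of the Jordan--Hölder/Schreier theorem, and ensuring the case split $G = \1$ / $G$ simple / $G$ with a proper nontrivial maximal normal subgroup is exhaustive so that the induction on $|G|$ goes through.
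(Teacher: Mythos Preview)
Your proposal is correct and follows essentially the same line as the paper's own proof: the key identity $\JH(G)=\JH(N)+\JH(G/N)$ (which the paper derives by splicing composition series rather than invoking Schreier refinement by name) yields the extension, normal, and quotient properties; constructability comes from $\JH(S)=1$ for simple $S$; the product axiom fails on a product of two simple groups; and part~(2) is handled by the identical induction on composition length, choosing a maximal normal subgroup and using the extension axiom for $\c$ together with $\c(G/N)\le 1$. Part~(3) is the same formal wrap-up in both.
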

\begin{proof}  (1) Since the number of Jordan-Hölder factors of a simple group is 1 and every finite group can be constructed from simple groups by iterated extension, constructability holds for $\JH$. 
If $G$ is an extension of $Q$ by $N$, then since a composition series for $G$ can be constructed from the one from $Q$ by multiply each subgroup in the series by $N$, and followed by a composition series for $N$ down to the trivial group.  The quotients of this series are exactly the Jordan-Hölder factors of $Q$ together with those of $N$ counting mulitplicities.
Therefore
$\JH(G)=\JH(N)+\JH(Q)$, whence extension holds for $\JH$. 
The quotient axiom $\JH(G/N) \leq \JH(G)$ holds since quotienting removes the occurrences of Jordan-Hölder factors of $N$ from  those of $G$. 
The product axiom does not hold since, e.g. $\JH(K \times H)=2$ but $\max(\JH(K),\JH(H))=1$ for any simple groups $K$ and $H$. Since the trivial group has no Jordan-Hölder factors $\JH(\1)=0$.

(2) Consider functions $\c$ on all finite groups which satisfy all axioms except possibly the direct product axiom. 
Let $$\epsilon(G)= \sup \{\c(G) : \c \mbox{ satisfies the complexity axioms, except possibly the product axiom}\}.$$
Suppose $\c$ satisfies the axioms except possibly the direct product axiom. 
We claim $\c(G)\leq \JH(G)$ for every finite group $G$, showing this by induction on the length of a composition series for $G$, i.e., on $\JH(G)$.  The case $\JH(G)=0$ is trivial.
If $\JH(G)=1$, then $G$ is simple, and $G$ cannot be constructed by iterated extension from other groups, so $\c(G)\leq 1$ by constructability, and the claim holds. 
If $\JH(G)>1$, then $G$ is not simple
and $G$ has a maximal normal subgroup $N$ with $G/N$ is simple, so $\JH(G/N)=1$. By the extension axiom, $\c(G) \leq \c(N) + \c(G/N)$. Now $\c(N) \leq \JH(N)$ by induction hypothesis,  so $\c(G)\leq \JH(N)+1=\JH(G)$.
We conclude that $\c(G) \leq \JH(G)$ for all $G$ by induction.  Since $\c$ was arbitrary, 
 $\epsilon(G) \leq \JH(G)$.
Since $\JH$ itself satisfies the axioms (except the product axiom), it follows that $\epsilon(G)=\JH(G)$ for all $G$. This proves (3).
\end{proof}

\subsection{Existence of a Maximal Group Complexity Function}

\begin{theorem}\label{uniqueCpx}
There is exists a unique maximal hierarchical complexity function $\cx$ on finite groups satisfying the axioms, where 
$$\cx(G)= \sup \{\c(G) : \c \mbox{ satisfies the complexity axioms }\}. $$

\end{theorem}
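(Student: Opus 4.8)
The plan is to verify that the pointwise supremum $\cx$ defined in the statement is (i) a well-defined function from finite groups to $\N$, (ii) itself a complexity function, and therefore (iii) the unique maximal one, uniqueness having already been recorded in the paragraph preceding the theorem. For (i): by Lemma~\ref{trivial} the class of complexity functions is nonempty (it contains $z$ and $\delta$), so for each finite group $G$ the set $\{\c(G)\colon\c\text{ a complexity function}\}$ is a nonempty subset of $\N$; by Lemma~\ref{JH}(\ref{DomByJH}) every complexity function satisfies $\c(G)\le\JH(G)$, so this set is bounded above by the finite number $\JH(G)$ and hence has a maximum. Thus $\cx(G)$ is a well-defined natural number (and in fact the supremum is attained, by a complexity function that may depend on $G$).

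For (ii) I would check each axiom by ``passing to the supremum'', using that each axiom bounds the value at a group by a monotone expression in the values at related groups. For the initial condition, $\c(\1)=0$ for every $\c$, so $\cx(\1)=0$. For the quotient axiom, if $\varphi\colon G\sur H$ then $\c(H)\le\c(G)$ for every $\c$, whence $\cx(H)=\sup_\c\c(H)\le\sup_\c\c(G)=\cx(G)$; the same one-line argument gives $\cx(N)\le\cx(G)$ whenever $N\lhd G$, so the Normal property is inherited by $\cx$ as well, should one count it among the axioms. For the extension axiom, fix $N\lhd G$: for every complexity function $\c$, $\c(G)\le\c(N)+\c(G/N)\le\cx(N)+\cx(G/N)$, and since the right-hand side is independent of $\c$, taking the supremum over $\c$ on the left gives $\cx(G)\le\cx(N)+\cx(G/N)$. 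For constructability, by Lemma~\ref{construct-simple} it suffices to show $\cx(S)\le1$ for every simple $S$; but Lemma~\ref{construct-simple} also gives $\c(S)\le1$ for every complexity function, so $\cx(S)=\sup_\c\c(S)\le1$. Finally, for the product axiom with $G=H\times K$: for every $\c$, $\c(G)=\max(\c(H),\c(K))\le\max(\cx(H),\cx(K))$, and as this bound is independent of $\c$ we get $\cx(G)\le\max(\cx(H),\cx(K))$; conversely the two projections $G\sur H$ and $G\sur K$, together with the quotient property of $\cx$ just established, give $\cx(G)\ge\cx(H)$ and $\cx(G)\ge\cx(K)$, so $\cx(G)=\max(\cx(H),\cx(K))$.

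For (iii): by construction $\cx(G)\ge\c(G)$ for every complexity function $\c$ and every finite group $G$, so $\cx$ pointwise dominates every complexity function; since by (ii) $\cx$ is itself a complexity function, it is a maximal complexity function, and, as noted before the theorem, any maximal complexity function is unique.

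The only step requiring a little care is the product axiom, since it is an equality rather than a one-sided bound: its ``$\le$'' half passes to the supremum directly (crucially using that the $\c$ over which we take the sup themselves satisfy the product axiom), whereas its ``$\ge$'' half is not of \emph{sup-stable} shape and must instead be obtained from the quotient property already proved for $\cx$ rather than argued function by function. All the remaining verifications are routine once one observes that each axiom has the appropriate monotone logical form.
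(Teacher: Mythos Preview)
Your proof is correct and follows essentially the same strategy as the paper: show the supremum is finite using Lemmas~\ref{trivial} and~\ref{JH}, then verify each axiom passes to the pointwise supremum.

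The one notable difference is in the ``$\geq$'' half of the product axiom. The paper chooses complexity functions $\c'$ and $\c''$ maximizing the values at $H$ and $K$ respectively, and then applies the product axiom for $\c'$ and $\c''$ to $G=H\times K$ to bound $\cx(G)$ below. Your route is cleaner: once the quotient axiom has already been established for $\cx$, the projections $H\times K\sur H$ and $H\times K\sur K$ immediately give $\cx(G)\ge\cx(H)$ and $\cx(G)\ge\cx(K)$. This avoids juggling three separate complexity functions and instead leverages an axiom already verified for $\cx$ itself. Both arguments are valid; yours is shorter and highlights that the ``$\geq$'' half of the product axiom is in fact a consequence of the quotient axiom alone.

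One small remark: you include a verification of the Normal property alongside the quotient axiom. This is harmless, but note that in the paper the Normal property is deliberately \emph{not} taken as an axiom (see the Important Note preceding Section~2), so strictly speaking the functions $\c$ over which you take the supremum need not satisfy it, and your one-line inheritance argument ``$\c(N)\le\c(G)$ for every $\c$'' would not go through. Since the theorem only asserts that $\cx$ satisfies the five axioms of Section~\ref{CpxAxioms}, this does not affect the correctness of your proof of the theorem as stated; the Normal property for $\cx$ is established later by other means (Theorem~\ref{cpxNorm}).
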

\begin{proof} 
By Lemma~\ref{trivial}, the supremum is over a non-empty set of natural numbers. By Lemma~\ref{JH}(\ref{DomByJH}),  $\JH(G)$ is an upper bound on $\c(G)$ for each complexity function $\c$. Therefore $\cx(G)$ is natural number no larger than $\JH(G)$.

We claim $\cx$ satisfies the complexity axioms.

Given $G$ choose $\c$ satisfying the axioms such that
$\c(G)$ is maximal,  then for any $N$ normal in $G$, we have $$\cx(G)=\c(G) \leq \c(N)+\c(G/N) \leq \cx(N) + \cx(G/N).$$
The first inequality holds since the axiom extension holds for $\c$.  The second holds by definition of $\cx$. Therefore $\cx$ satisfies  extension.

For the quotient axiom, choose a complexity function $\c_Q$ for which $\c_Q(G/N)$ is greatest, then by the quotient axiom for $\c_Q$ and definition of $\cx$
$$\cx(G) \geq \c_Q(G) \geq \c_Q(G/N)=\cx(G/N).$$

Clearly, $\cx(\1)=0$ since $\c(\1)=0$ for each $\c$ satisfying the axioms.
For each finite simple group $K$, we have $\cx(K)\leq 1$ since for each complexity function $\c(K)\leq 1$.  Hence, each group
$G$ can be constructed by iterated extension from groups $K$ for which $\cx(K)\leq 1$. Thus constructability axiom holds for $\cx$. (In fact, $\cx(K)\geq \delta(K)=1$ for simple $K$ shows $\cx(K)=1$.)

For the product axiom, suppose
$G=H\times K$, and
choose a complexity function $\c$ for which $\c(G)$ is greatest,  $\c'$ for which $\c'(H)$ is greatest, and $\c''$ for which $\c''(K)$ is greatest. Then we have
\begin{eqnarray*}
\cx(G)&= &\c(G) \mbox{ by choice of $\c$}\\
         &= &\max\{\c(H),\c(K)\} \mbox{ by product axiom for $\c$}\\
         &\leq&\max\{\c'(H),\c''(K)\} \mbox{ by choice of $\c'$ and $\c''$}\\
         & = &\max\{\cx(H),\cx(K)\} \mbox{ by choice of $\c$ and $\c''$ and definition of $\cx$}
         \end{eqnarray*}
         On the other hand,
 \begin{eqnarray*}   
 \cx(G) & \geq &\max\{\c'(G), \c''(G)\} \mbox{ since $\cx$ is maximal at $G$ }\\
         & =&\max\{\max\{\c'(H),\c'(K)\},
                  \max\{\c''(H),\c''(K)\}\} \mbox{ by the product axiom for $\c'$ and $\c''$}\\
         & \geq &\max\{\c'(H), \c''(K)\} \\
         & = & \max\{\cx(H),\cx(K)\} \mbox{ by choice of   $\c'$ and $\c''$ and definition of $\cx$ at $H$ and at $K$}.
       \end{eqnarray*}   
This proves $\cx(H\times K)=\max\{\cx(H),\cx(K)\}$ as required.

By its definition $\cx(G)\geq \c(G)$ for any $\c$ that also satisfies the axioms. It follows $\cx$ is unique, since if $\c$ were also pointwise maximal then $\cx(G)\geq \c'(G) \geq \cx(G)$, and so $\cx(G)=\c(G)$ for every group $G$.  
\end{proof}

From the argument of the proof, the following is clear.
\begin{corollary}
The function given by the pointwise maximum of any set of complexity functions is complexity function.  That is, 
let $I$ index a set of complexity functions, then the function $$\c_I(G)=\max_{i\in I} \c_i(G),$$
for  $G$ a finite group,   satisfies the complexity axioms. 
\end{corollary}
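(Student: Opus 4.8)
The plan is to observe that the Corollary is essentially a repackaging of the argument already used to prove Theorem~\ref{uniqueCpx}: the key point is that each of the six complexity axioms is preserved under taking a pointwise supremum (or maximum) of complexity functions, and the proof of Theorem~\ref{uniqueCpx} established exactly this for the particular set of \emph{all} complexity functions. So I would fix an arbitrary index set $I$ and a family $\{\c_i : i \in I\}$ of complexity functions, set $\c_I(G) = \max_{i \in I} \c_i(G)$ (noting this maximum is attained since, by Lemma~\ref{JH}(\ref{DomByJH}), each $\c_i(G) \le \JH(G)$, so the values lie in a finite subset of $\N$), and verify the six axioms one at a time, in each case reusing verbatim the relevant paragraph of the proof of Theorem~\ref{uniqueCpx} with $\cx$ replaced by $\c_I$ and ``$\c$ satisfies the axioms'' replaced by ``$\c \in \{\c_i : i \in I\}$''.

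Concretely: for the Extension axiom, given $N \lhd G$ pick $i \in I$ with $\c_i(G) = \c_I(G)$; then $\c_I(G) = \c_i(G) \le \c_i(N) + \c_i(G/N) \le \c_I(N) + \c_I(G/N)$, the first inequality by extension for $\c_i$ and the second by definition of $\c_I$ as a pointwise max. For the Quotient axiom, pick $j \in I$ with $\c_j(G/N) = \c_I(G/N)$; then $\c_I(G) \ge \c_j(G) \ge \c_j(G/N) = \c_I(G/N)$. The Initial Condition is immediate since $\c_i(\1) = 0$ for all $i$, so $\c_I(\1) = 0$. Constructability follows from Lemma~\ref{construct-simple}: for any simple $S$, $\c_I(S) = \max_i \c_i(S) \le 1$ since each $\c_i(S) \le 1$. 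The Product axiom is the one slightly fussier case and is handled exactly as in Theorem~\ref{uniqueCpx}: to get $\c_I(H \times K) \le \max\{\c_I(H), \c_I(K)\}$, pick $i$ attaining $\c_I(H\times K)$ and use the product axiom for $\c_i$ to write $\c_i(H\times K) = \max\{\c_i(H), \c_i(K)\} \le \max\{\c_I(H), \c_I(K)\}$; for the reverse inequality, pick $i'$ attaining $\c_I(H)$ and $i''$ attaining $\c_I(K)$, and observe $\c_I(H\times K) \ge \max\{\c_{i'}(H\times K), \c_{i''}(H\times K)\} \ge \max\{\c_{i'}(H), \c_{i''}(K)\} = \max\{\c_I(H), \c_I(K)\}$, using the product axiom for $\c_{i'}$ and $\c_{i''}$ to drop the $K$- and $H$-components respectively. (The Normal axiom need not be checked, as it is not among the six axioms being preserved — though the same template would handle it too.)

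There is no real obstacle here; the only mild subtleties to flag are (i) that one must confirm the supremum defining $\c_I$ is actually achieved — this is where the uniform bound $\c_i(G) \le \JH(G)$ from Lemma~\ref{JH} is used, guaranteeing $\c_I$ takes values in $\N$ and that ``$\max$'' is literally correct rather than merely ``$\sup$'' — and (ii) that in the Product argument one must be careful to choose the maximizing index separately for $H$, for $K$, and for $H \times K$, since a single $\c_i$ need not simultaneously maximize all three. Since the proof of Theorem~\ref{uniqueCpx} already contains both of these points, the cleanest write-up is simply to remark that the verification is identical to that in the proof of Theorem~\ref{uniqueCpx}, carried out for the family $\{\c_i : i \in I\}$ in place of the family of all complexity functions, and then either cite or briefly reproduce the Product axiom computation. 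I expect the entire proof to be two short paragraphs.
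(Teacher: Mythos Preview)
Your proposal is correct and takes essentially the same approach as the paper: the paper's entire proof of this Corollary is the single sentence ``From the argument of the proof, the following is clear,'' referring back to Theorem~\ref{uniqueCpx}, and your proposal does precisely what that sentence invites---it reruns the axiom-by-axiom verification from Theorem~\ref{uniqueCpx} with the family $\{\c_i : i\in I\}$ in place of all complexity functions. Your attention to the two subtleties (attainment of the max via the $\JH$ bound, and choosing separate maximizing indices in the Product argument) is exactly right and matches what the proof of Theorem~\ref{uniqueCpx} does.
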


\section{The Subgroup Axiom (or ``Embedding Axiom'')}\label{SubgroupSec}

\begin{quote}
 {\small \it   
The sculpture is already complete within the marble block, before I start my work. It is already there, I just have to chisel away the superfluous material.}  \ \ \ \ \ --Michelangelo
\end{quote}
\vspace{1em}

It is possible to consider another axiom, namely that the complexity of a subgroup should be no more than the complexity of a group containing it:\\[.5ex]

\underline{Subgroup Axiom.}  If $H$ is a subgroup of $G$, then $\c(H)\leq \c(G)$.\\[.5ex]

\noindent
While this axiom sounds reasonable, it leads to mainly uninteresting complexity functions reminiscent of detecting only the marble block in which a beautiful sculpture might be found. 

\begin{proposition}[``The Michelangelo Problem'']\label{cpx-no-subgroup}
If a complexity measure $\c$ satisfies the subgroup axiom, its value is bounded above by 1.
\end{proposition}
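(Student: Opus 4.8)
The plan is to show that every finite group embeds into a finite simple group, so that the subgroup axiom, combined with Lemma~\ref{simplecpx} (which uses only constructability), pins the complexity of every group at $\le 1$; together with the initial condition $\c(\1)=0$ this is exactly the asserted bound.

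First I would invoke Cayley's theorem: a finite group $G$ embeds in the symmetric group $S_n$ with $n=|G|$. Next I would use the standard embedding $S_n \hookrightarrow A_{n+2}$, which sends an even permutation to itself and an odd permutation $\sigma$ to $\sigma$ composed with the transposition $(n{+}1\ n{+}2)$; one checks in a line that this is an injective homomorphism whose image consists of even permutations of $\{1,\dots,n+2\}$. Setting $m=\max(n+2,5)$ and using $A_{n+2}\leq A_m$, we get an embedding $G \hookrightarrow A_m$ with $A_m$ a nonabelian finite simple group.

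Then I would apply the subgroup axiom transitively along the chain $G \leq A_m$ to conclude $\c(G) \leq \c(A_m)$, and Lemma~\ref{simplecpx} to conclude $\c(A_m)\leq 1$. Hence $\c(G)\leq 1$ for every finite group $G$, as claimed.

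The argument is essentially obstacle-free: the one point needing explicit verification is that $S_n$ really does embed in $A_{n+2}$ (equivalently, the well-known fact that every finite group is a subgroup of some finite alternating group), and one could instead simply cite this. I would close with the remark that the bound is sharp for nontrivial groups, since the function $\delta$ of Lemma~\ref{trivial}(2) satisfies the subgroup axiom and takes the value $1$ on every nontrivial group, while $\c(\1)=0$ forces the statement to be phrased as ``bounded above by $1$'' rather than ``equal to $1$''.
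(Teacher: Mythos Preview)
Your proof is correct and follows essentially the same approach as the paper: embed $G$ into a finite simple alternating group, then apply the subgroup axiom together with Lemma~\ref{simplecpx}. The only difference is cosmetic: the paper's Lemma~\ref{EmbedInAlternatingGroup} doubles the regular representation to land in $A_{2|G|}$, whereas you use Cayley plus the $S_n\hookrightarrow A_{n+2}$ trick; both are standard and either suffices.
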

\begin{proof}
By the next lemma, any finite group $G$ embeds into a finite simple group $S$.   
This simple group has complexity at most $1$  by Lemma~\ref{simplecpx}, whence the subgroup axiom implies
$\c(G) \leq \c(S) \leq 1$.
\end{proof}

\begin{lemma}\label{EmbedInAlternatingGroup}
Every finite group $G$ embeds in a finite simple alternating group.
\end{lemma}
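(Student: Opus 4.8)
The plan is to exhibit, for any finite group $G$, an explicit embedding into some finite alternating group $A_m$ which is simple (i.e., with $m \geq 5$). The starting point is Cayley's theorem: $G$ embeds in the symmetric group $S_n$ where $n = |G|$, via the left (or right) regular action of $G$ on itself. So it suffices to embed $S_n$ into a simple alternating group.

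First I would observe that $S_n$ does not embed into $A_n$ in general (for instance $S_3$ has no faithful action on $3$ points inside $A_3$), so a trick is needed to land inside the even permutations. The standard device is to take the action of $S_n$ on a larger set. Concretely, consider the action of $S_n$ on the disjoint union of two copies of $\{1,\dots,n\}$, say on $\{1,\dots,n,1',\dots,n'\}$, where each $\sigma \in S_n$ acts ``diagonally'' as $\sigma$ on the first copy and as $\sigma$ on the second copy. A transposition then acts as a product of two transpositions, which is even, and since transpositions generate $S_n$ the whole image lies in $A_{2n}$. This action is clearly faithful, so $S_n \hookrightarrow A_{2n}$. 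Composing with Cayley's embedding gives $G \hookrightarrow A_{2n}$ with $n = |G|$.

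The only remaining point is simplicity of the target: $A_m$ is simple precisely when $m \geq 5$. Taking $m = 2n = 2|G|$, this holds automatically unless $|G| \leq 2$. For the finitely many small cases ($G$ trivial or $G \cong \Z/2\Z$) one handles them directly: both embed in $A_5$ (indeed any group of order at most $2$ embeds in $A_5$ via an explicit transposition-product or trivially). So in all cases $G$ embeds in a finite simple alternating group.

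I do not anticipate a genuine obstacle here — the argument is elementary. The one subtlety worth stating carefully is \emph{why} the doubled action lands in the alternating group: it is because $S_n$ is generated by transpositions and each transposition becomes a product of an even number of transpositions under the diagonal doubling, so the image is contained in the kernel of the sign homomorphism on $S_{2n}$. An alternative route, if one prefers not to double, is to note that $S_n$ embeds into $A_{n+2}$ by sending each odd permutation $\sigma$ to $\sigma \cdot (n{+}1\ \, n{+}2)$ and each even permutation to itself; this is a homomorphism into $A_{n+2}$ and is faithful. Either way the conclusion and the handling of small cases are the same, and the proposition ``The Michelangelo Problem'' follows immediately by combining this lemma with Lemma~\ref{simplecpx}.
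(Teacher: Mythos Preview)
Your proof is correct and follows essentially the same approach as the paper: use the regular representation (Cayley's theorem), double it to land in $A_{2|G|}$, and handle the small-$|G|$ cases separately by embedding into $A_5$. The only cosmetic differences are that the paper doubles the regular action of $G$ directly rather than first passing through $S_n$, and verifies evenness by observing each cycle is duplicated rather than by checking generators; your alternative $S_n \hookrightarrow A_{n+2}$ route is not in the paper but is a fine variant.
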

\begin{proof}  If $|G|\leq 3$ then $G$ embeds in the smallest simple alternating group $A_5$.
If $|G|$ is at least 3, then consider the right regular representation of $G$ acting on itself.
This gives an embedding of $G$ in the symmetric group on $|G|$ points. From this we can construct
an embedding of $G$ into an alternating group on $n=2|G|$ points, which is a simple group since $n\geq 5$ as
shown by Galois. Taking two disjoint copies of 
$G$ as a set, let $G$ act on each one as in the right regular representation within each copy by right multiplication.
Whether $g\in G$ acts on the itself by an odd or even permutation, the action of $g$ on the $2|G|$  points is always even since
we have two disjoint copies of each cycle  in $g$ action on $|G|$.  
This gives an embedding of $G$ in the simple group.
\end{proof}

We shall see in section~\ref{solv} that for complexity measures on solvable groups the subgroup axiom does not lead to the triviality seen in ``Michelangelo problem'' but is useful (and actually a consequence for the hierarchical complexity function and related functions on solvable groups).

\section{Spans of Gems and Hierarchical Complexity}

Viewing finite simple groups as the rare `gems' among finite groups, from which all finite groups are built,
call a finite group $G$ {\em a span of gems} if it is simple or the direct product of simple groups, i.e.,
$$G\cong K_1 \times \cdots \times K_n \mbox{ (with each $K_i$ simple, $0 \leq i \leq n$)}.$$
(Note: $n=0$, $G$ is the trivial group $\1$.)
We also refer to such a group as a {\em m\={a}l\={a}}\ (or  {\em necklace) of simple groups}\footnote{This is a Sanskrit word referring to a circular chain of beads in the form of a rosary.} in line with terminology introduced for other complexity functions (cf.~Section~\ref{der-fit-solv-cpx}).

Indeed, from the constructability and product axioms, it follows that a span of gems can have complexity at most one. 
We shall see that for the maximal complexity function $\cx$ on finite groups, the nontrivial spans of gems are exactly the groups of complexity of 1, and layering them leads to all higher complexity groups. 

We first investigate spans of gems in Section~\ref{1stSplittingLemmas},  showing in detail how these groups are split as internal direct products by their normal subgroups, and also by their quotients.  In Section~\ref{sec-mu-complexity}, Theorem~\ref{mucpx} characterizes the maximal complexity function on groups $\cx$ as the hierarchical complexity measure $\mu$ where spans of gems comprise the `layers' in a hierarchy in a minimal length decomposition of group $G$, with $\cx(G)$-many layers in the recursive construction of finite groups from simpler ones by iterated extensions.

\noindent
\underline{Notation}: For  a collection of groups $\S$, let $\Span(\S)$ denote all groups isomorphic to direct products $G_1\times \cdots \times  G_n$ with each $G_i\in \S$ ($0 \leq i \leq n$).  We say $G$ is a {\em span of groups from $\S$} (or  a {\em m\={a}l\={a} of $\S$ groups})  if $G\in \Span(\S)$.

\subsection{Spans of Gems:  Splitting Lemmas~I}\label{1stSplittingLemmas}

In this subsection we prove two variants of ``the span of gems splitting lemma''.  Namely, we shall see that they are split by any normal subgroup in complementary spans of gems in a particular manner that is ``rigid'' for their simple non-abelian group (SNAG) factors but ``fluid'' for their maximal elementary abelian factors (direct powers of prime-order cyclic groups).  These results reformulating classical facts (see references in the first paragraph of Section~\ref{MinSocHier})  are needed in the sequel.

We begin with this observation: 

\begin{lemma}[SNAG Factor Rigidity]\label{SNAGrigidity}
Let $H$ be a simple non-abelian group (SNAG). Consider any direct product $H\times K$ with another group $K$. Let $\pi: H\times K \rightarrow K$ be the projection morphism. 
For $N \lhd H\times K$, if $(h_0, k_0) \in N$ with $h_0\neq 1$ then $H\times \1 \lhd N$, and
 $N=H \times \pi(N)$. Otherwise $N=\1 \times \pi(N)$. In either case, $\pi(N)\lhd K$.
\end{lemma}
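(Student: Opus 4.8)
The plan is to analyze the projection $\pi|_N : N \to \pi(N) \le K$ together with its kernel. Write $N_0 = N \cap (H \times \1)$, which is precisely the kernel of $\pi|_N$ restricted to $N$; under the identification $H \times \1 \cong H$, the subgroup $N_0$ corresponds to a normal subgroup of $H \times K$ contained in $H$, hence in particular $N_0 \lhd H$. Since $H$ is simple, either $N_0 = \1$ or $N_0 = H \times \1$. I would dispose of these two cases separately, and the crux is to show that the first alternative forces $N \subseteq \1 \times K$.

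First I would treat the case $N_0 = \1 \times \1$. Then $\pi|_N$ is injective, so $N \cong \pi(N)$. I claim every element of $N$ has trivial $H$-coordinate. Suppose $(h_0, k_0) \in N$ with $h_0 \ne 1$. Because $H$ is simple and non-abelian, it is non-abelian, so there exists $h_1 \in H$ with $h_1 h_0 h_1^{-1} \ne h_0$. The element $(h_1, 1)$ lies in $H \times K$ and normalizes $N$, so the commutator $[(h_1,1),(h_0,k_0)] = (h_1 h_0 h_1^{-1} h_0^{-1}, 1)$ lies in $N$; its second coordinate is trivial and its first coordinate is non-trivial, contradicting $N_0 = \1$. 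Hence every element of $N$ has the form $(1,k)$, i.e. $N = \1 \times \pi(N)$. This settles the ``otherwise'' clause: if $N$ contains no element with non-trivial $H$-part, we are automatically in this case (if $N_0 \ne \1$ then $N_0 = H\times\1$ already exhibits such an element).

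Next, the case $N_0 = H \times \1$. Then $H \times \1 \lhd N$ directly. For any $(h, k) \in N$ we have $(h,1) \in N$ (as $(h,1) \in H \times \1 = N_0 \le N$), hence $(1,k) = (h,1)^{-1}(h,k) \in N$, so $(1,k) \in \1 \times \pi(N) \subseteq N$. Combined with $H \times \1 \le N$ this gives $N \supseteq H \times \pi(N)$; the reverse inclusion $N \subseteq H \times \pi(N)$ is immediate since every element of $N$ has second coordinate in $\pi(N)$ and first coordinate in $H$. Thus $N = H \times \pi(N)$. Finally, in both cases $\pi(N) \lhd K$: $\pi$ is a surjective homomorphism onto $K$ and the image of a normal subgroup under a surjection is normal, so $\pi(N) \lhd \pi(H \times K) = K$. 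The only real subtlety is the commutator argument in the first case, which is where non-abelianness of $H$ is essential — for abelian simple $H$ the statement genuinely fails, matching the ``fluidity'' remark in the surrounding text.
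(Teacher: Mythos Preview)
Your proof is correct and follows essentially the same route as the paper: both arguments produce a nontrivial element of $N\cap(H\times\1)$ via a commutator with $(h_0,k_0)$, then invoke simplicity of $H$ to force $H\times\1\le N$, and finish by multiplying by $(h^{-1},1)$ to see $\1\times\pi(N)\le N$. Your framing through $N_0=N\cap(H\times\1)$ and the dichotomy $N_0\in\{\1,\,H\times\1\}$ is a slightly cleaner way to organize the same computation.
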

\begin{proof} If there is no $(h_0,k_0)$ with $h_0$ a non-trivial element of $H$, then clearly $N=\1 \times \pi(K)$, and also  $\pi(N) \lhd K$ since $\1\times \pi(N)=N$ is normal in $H\times K$. Thus, the assertion holds.

Otherwise there is a $(h_0,k_0)\in N$  with $h_0 \neq 1$, so also $(h_0^{-1}, k_0^{-1})\in N$. Since $N$ is normal, for all $(h,k)\in H\times K$,
 $(h^{-1}h_0^{-1}h, k^{-1}k_0^{-1}k) \in N$. So taking $k=k_0$, and multiplying the following two members of $N$ yields
$$(h^{-1}h_0^{-1}h, k_0^{-1}k_0^{-1}k_0)(h_0, k_0)=(h^{-1}h_0^{-1}h h_0,1) \in N.$$
Since $H$ is non-abelian, there is an $h\in H$ so $h^{-1}h_0^{-1}h h_0$ is not $1$. Write $h_1=h^{-1}h_0^{-1}h h_0$.  Since $h_1\neq 1$ and $H$ is simple, the normal closure in $H\times K$ of the group generated by $(h_1,1) \in N$ contains $H\times \1$. Thus, $H\times\1 \leq N$. Of course $H\times \1$ is normal in $N$ since it is normal in $H\times K$.  
Now for any $(h,k) \in N$, also $(1,k)\in N$ since $(h^{-1}, 1) \in H\times \1 \leq N$ and $(1,k)=(h,k)(h^{-1},1)$.  That is $\1 \times \pi(N) \leq N$. 
Hence we can write each $(h,k)$ in $N$ as a member of $H\times \1$ times a member of $\1\times \pi(N)$. By normality of $N$, any conjugate of $(1,k)$ lies in $N$, so $\pi(N)\lhd K$ (since the $H$-coordinate of such a conjugate is $1$).  Thus $N$ is the (internal) direct product of $H\times \1$ and 
$\1 \times \pi(N) \lhd N$. 
\end{proof}
Thus if a normal subgroup intersects a SNAG factor of a direct product nontrivially, it contains the whole SNAG. By iterated the Lemma, we have, 
\begin{corollary}
A normal subgroup $N$ of a direct product of SNAGs and another group $K$ is the direct product of a subset of these SNAGs and the projection $\pi(K)$ of $N$ to $K$, and, moreover, $\pi(N)\lhd K$.
\end{corollary}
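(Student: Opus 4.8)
The plan is to prove the corollary by induction on the number $m$ of SNAG factors, using Lemma~\ref{SNAGrigidity} to peel off one SNAG at a time. Write $G = H_1 \times \cdots \times H_m \times K$ with each $H_i$ a SNAG, and let $\pi \colon G \to K$ be the projection; here $\pi(N)$ is what is meant in the statement. When $m = 0$ there is nothing to prove: $\pi$ is the identity, $N \lhd G = K$, and the empty subset of SNAGs works.

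For the inductive step, I would regard $G$ as $H_1 \times K'$ with $K' = H_2 \times \cdots \times H_m \times K$, and let $\pi' \colon G \to K'$ be the projection. Applying Lemma~\ref{SNAGrigidity} with $H = H_1$ (its role of ``$K$'' played by $K'$) to $N \lhd G$, we get that either $N = H_1 \times \pi'(N)$ or $N = \1 \times \pi'(N)$, and in both cases $\pi'(N) \lhd K'$. Since $K'$ is a direct product of the $m-1$ SNAGs $H_2, \dots, H_m$ with the group $K$, the induction hypothesis applies to $\pi'(N) \lhd K'$: there is a subset $I' \subseteq \{2, \dots, m\}$ with $\pi'(N) = \bigl(\prod_{i \in I'} H_i\bigr) \times \pi_K(\pi'(N))$ inside $K'$, where $\pi_K \colon K' \to K$ is the projection and $\pi_K(\pi'(N)) \lhd K$.

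To conclude, I would note that $\pi = \pi_K \circ \pi'$, so $\pi(N) = \pi_K(\pi'(N)) \lhd K$, and substituting the description of $\pi'(N)$ into the two cases from Lemma~\ref{SNAGrigidity} yields $N = \bigl(\prod_{i \in I} H_i\bigr) \times \pi(N)$ with $I = \{1\} \cup I'$ in the first case and $I = I'$ in the second, completing the induction. I do not expect a genuine obstacle: the corollary is just an iteration of Lemma~\ref{SNAGrigidity}, and the only thing needing care is the bookkeeping — checking that the successive projections compose correctly and that each intermediate image $\pi'(N)$ is genuinely normal in the smaller direct product, which is precisely the ``in either case'' clause of Lemma~\ref{SNAGrigidity}, so the induction hypothesis is legitimately available at each stage.
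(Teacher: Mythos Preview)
Your proposal is correct and is exactly the approach the paper takes: the paper simply says the corollary follows by iterating Lemma~\ref{SNAGrigidity}, and your induction on the number of SNAG factors is precisely that iteration carried out carefully. The bookkeeping you flag (that $\pi = \pi_K \circ \pi'$ and that $\pi'(N) \lhd K'$ so the induction hypothesis applies) is exactly what is needed, and there is no obstacle.
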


 The `span of gems' groups play a special role in the hierarchical complexity theory of finite groups and have special properties.
 Such a group can be ``split along any normal subgroup'' into a direct product:
 
 \begin{lemma}[Span of Gems - Splitting Lemma I]\label{SpanOfGems}
 If a finite group $G$ is a direct product of simple groups, and $A \lhd G$, then there exists $B\lhd G$, such that $G=A\times B$ as an internal direct product.  
\end{lemma}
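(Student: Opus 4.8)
The plan is to prove the Span of Gems Splitting Lemma by induction on the number of simple direct factors of $G$, using the SNAG Factor Rigidity Lemma~\ref{SNAGrigidity} to peel off non-abelian factors and elementary linear algebra over $\mathbb{F}_p$ to handle the abelian part. First I would write $G = K_1 \times \cdots \times K_n$ with each $K_i$ simple. Group the factors so that the abelian ones (each cyclic of prime order) are collected into a subproduct $V = C_{p_1} \times \cdots$, which is a direct sum of one-dimensional vector spaces over the various prime fields $\mathbb{F}_{p}$, and the non-abelian ones into $M = S_1 \times \cdots \times S_r$ (the SNAGs). The base case $n=0$ (or $n=1$) is immediate: take $B = \mathbf{1}$ if $A = G$, and $B = G$ if $A = \mathbf{1}$, and if $G$ is simple these are the only normal subgroups.

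For the inductive step I would split into two cases according to whether $G$ has a non-abelian simple factor that the analysis can exploit. Case 1: suppose $K_n = S$ is a SNAG. Write $G = H \times S$ where... actually more conveniently put $S$ as the distinguished factor and $K = K_1 \times \cdots \times K_{n-1}$, so $G = S \times K$. Apply Lemma~\ref{SNAGrigidity} (with the roles set up so $S$ plays the role of $H$): either $A = S \times \pi(A)$ with $\pi(A) \lhd K$, or $A = \mathbf{1} \times \pi(A)$ with $\pi(A) \lhd K$. In the first case, $\pi(A) \lhd K$ and $K$ is a product of $n-1$ simple groups, so by induction there is $B' \lhd K$ with $K = \pi(A) \times B'$; then $B = \mathbf{1} \times B'$ works, since $G = S \times \pi(A) \times B' = A \times B$. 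In the second case, $A = \mathbf{1} \times \pi(A)$ with $\pi(A) \lhd K$, so by induction $K = \pi(A) \times B'$ and then $B = S \times B'$ satisfies $G = A \times B$. This disposes of $G$ whenever it has at least one non-abelian simple direct factor; iterating (or a secondary induction on $r$) reduces everything to the case $M = \mathbf{1}$, i.e. $G = V$ is elementary abelian of mixed characteristic.

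Case 2: $G = V$ is a finite abelian group that is a direct product of groups of prime order, i.e. $V \cong \bigoplus_p V_p$ where $V_p$ is an elementary abelian $p$-group, a vector space over $\mathbb{F}_p$. Any subgroup $A \lhd V$ decomposes as $A = \bigoplus_p A_p$ with $A_p = A \cap V_p$ (primary decomposition), and it suffices to find, for each $p$, a complement $B_p$ of the subspace $A_p$ inside the $\mathbb{F}_p$-vector space $V_p$; then $B = \bigoplus_p B_p$ is a complement of $A$ in $V$ and $G = A \times B$ internally. The existence of $B_p$ is just the fact that every subspace of a finite-dimensional vector space has a complementary subspace (extend a basis of $A_p$ to a basis of $V_p$). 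I should also note explicitly that $V$ itself really is a direct product of one-dimensional pieces so that ``direct product of simple groups'' genuinely means ``elementary abelian of squarefree-per-prime... '' — more precisely, a finite abelian group is a direct product of simple (hence prime cyclic) groups iff it is a direct sum of elementary abelian $p$-groups, which is automatic from the hypothesis.

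I expect the only real subtlety — and hence the main obstacle to state cleanly — is the bookkeeping in Case 1: making sure the decomposition $A = S \times \pi(A)$ or $A = \mathbf{1}\times\pi(A)$ produced by Lemma~\ref{SNAGrigidity} is correctly transported through the induction, and that the complement $B$ one builds is genuinely normal in all of $G$ (not merely in a factor). Normality of $B$ is in fact free here, since each $B$ we construct is a direct product of some of the original simple factors $K_i$ together with a complement inside the abelian part, and any such ``coordinate subgroup'' of a direct product is automatically normal in the whole; the same remark shows $A \cap B = \mathbf{1}$ and $AB = G$, giving the internal direct product. So the proof is essentially a clean induction whose two base phenomena — rigidity of SNAG factors and complementation of subspaces — are exactly the ``rigid vs.\ fluid'' dichotomy advertised before the statement.
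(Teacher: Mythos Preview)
Your proof is correct but takes a different route from the paper's. The paper inducts on $|G/A|$: at each step it picks any simple factor $S_i$ not contained in $A$, observes that $S_i\cap A$ is a proper normal subgroup of the simple group $S_i$ and hence trivial, so $S_iA\cong S_i\times A$ is normal in $G$ with strictly smaller index, and the induction hypothesis applied to $S_iA$ yields the complement. No abelian/non-abelian case split is made, and Lemma~\ref{SNAGrigidity} is not used at all.

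Your approach instead inducts on the number of simple factors, invoking SNAG rigidity to peel off non-abelian factors one at a time and then handling the remaining elementary abelian part by vector-space complementation. This is essentially the argument behind the Detailed Variant (Lemma~\ref{detailedSpanOfGems}), so you end up proving more structure than the bare statement requires: your complement $B$ is visibly a product of a subset of the original SNAG factors together with a subspace complement in the abelian part. The trade-off is that the paper's argument is shorter and uniform, while yours makes the ``rigid vs.\ fluid'' dichotomy explicit and immediately gives the finer description of Lemma~\ref{detailedSpanOfGems}. One minor remark: your closing claim that normality of $B$ is ``free'' because $B$ is a coordinate-style subgroup is true, but within your own induction the cleaner justification is simply that $B'\lhd K$ by the inductive hypothesis, whence $\mathbf{1}\times B'$ and $S\times B'$ are normal in $S\times K=G$.
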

\begin{proof} 
We may write $G$ as $S_1 \times \cdots \times S_k$, with $k\geq 1$ and each factor $S_i$ a simple group.
We proceed by induction on $|G/A|$.  If $|G/A|=1$, then we may choose the trivial subgroup of $G$ as $B$ and the assertion holds trivially. 
If $|G/A|>1$, there exists must some $S_i$ with $S_i\not\leq A$ (otherwise $A$ would be all of $G$). Now $S_i$ and $A$ are normal in $G$ properly contained $S_i\cap A$, which is normal in $S_i$, hence trivial. Since the commutator of any $s\in S_i$ and $a\in A$ lies in $S_i\cap A$ it follows that elements of $S_i$ commute with elements of $A$. So $SA=AS \cong S\times A$ is an internal direct and clearly normal in $G$. Now $|G/SA| < |G/A|$, so by induction hypothesis applied to $SA$ we have $G=(A\times S )\times C$, an internal direct product, for some $C\lhd G$. Letting $B=S\times C$, we have $G=A\times B$ as an internal direct product.
\end{proof}

We call $B$ as in Splitting Lemma~I (Lemma~\ref{SpanOfGems}) a {\em complement} to $A$ in $G$.  \\
 
\noindent \underline{Remarks on Splitting Lemma I}:
 The following are not hard to see : 

 1. Note complements are not generally unique as can be seen by considering $\Z_p \times \Z_p$, and $\Z_p \times \1$, which has complements any the other $p^2-p$ subgroups of size $p$.\\ 
 2. The hypothesis of normality of $A$ cannot dropped in if $G$ has a nonabelian simple group as a factor, as, e.g., any proper subgroup of simple group as no complement in $G$.\\
 3. Normality of $A$ can be dropped if $G$ has no simple nonabelian factor. \\
 4. If all the factor $S_i$, then $A$ consists of the factors it intersects has a unique complement consisting of the factors it does not intersect.

The proof of a variant of Lemma~\ref{SpanOfGems} gives more detail on this structure. 
\begin{lemma}[Span of Gems - Splitting Lemma I (Detailed Variant)]\label{detailedSpanOfGems}
Let $G$ be a finite group which is the direct product of simple groups, and let $N$ be a normal subgroup of $G$. Then, 
\begin{enumerate}
\item  \label{normalsubgroup}
$N$ is isomorphic to the direct product of simple groups.
\item  \label{quotient}
 $G/N$ is isomorphic to a direct product of simple groups.
\item  $G \cong N \times G/N$ as an internal direct product.
\end{enumerate}
\end{lemma}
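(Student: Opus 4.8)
The plan is to prove the three parts of Lemma~\ref{detailedSpanOfGems} by leveraging the structural information already extracted in Splitting Lemma~I (Lemma~\ref{SpanOfGems}) and the SNAG Factor Rigidity corollary. Write $G = S_1 \times \cdots \times S_k$ with each $S_i$ simple. First I would separate the factors into the abelian (cyclic of prime order) ones and the simple non-abelian (SNAG) ones, so $G = E \times P$ where $E$ is a direct product of cyclic groups of prime order (so an internal direct product of several elementary abelian $p$-groups, one per prime) and $P$ is a direct product of SNAGs. This reduction is convenient because the two types behave differently under intersection with $N$: the SNAG part is ``rigid'' and the abelian part is ``fluid.''

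For part~(3), $G \cong N \times G/N$ as an internal direct product: this is essentially immediate from Lemma~\ref{SpanOfGems} applied with $A = N$, which gives a normal complement $B$ with $G = N \times B$ (internal direct product); then $G/N \cong B$ by the second isomorphism theorem, so $G \cong N \times G/N$. I would state this first since it is the cleanest, and it will help with the other two parts.

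For part~(1), that $N$ is itself a direct product of simple groups: I would argue by induction on $k$, the number of simple direct factors of $G$, or alternatively invoke the Corollary after Lemma~\ref{SNAGrigidity} to peel off the SNAG factors of $G$ that $N$ meets nontrivially, reducing to the case where $G$ is elementary abelian (a direct product of cyclic groups of prime order over possibly several primes). In that abelian case, $N$ is a subgroup of a finite abelian group of squarefree exponent per prime, hence is again elementary abelian, hence a direct product of cyclic groups of prime order — a span of gems. Concretely: by the Corollary, $N = (S_{i_1} \times \cdots \times S_{i_r}) \times \pi(N)$ where the $S_{i_j}$ are the SNAG factors met by $N$ and $\pi(N)$ is the projection of $N$ into the remaining factors (all abelian plus the unmet SNAGs); since $N$ contains all SNAGs it meets, $\pi(N)$ actually lies inside the product of the abelian factors, where it is elementary abelian and so a direct product of cyclic groups of prime order. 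Hence $N$ is a span of gems.

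For part~(2), that $G/N$ is a direct product of simple groups: by part~(3), $G/N \cong B$ where $B$ is the normal complement from Lemma~\ref{SpanOfGems}, and $B$ is a normal subgroup of the span of gems $G$, so by part~(1) applied to $B \lhd G$, $B$ is a direct product of simple groups; hence so is $G/N$. The main obstacle, if any, is just being careful in part~(1) that the projection of $N$ to the abelian factors is genuinely elementary abelian and not merely a subgroup of something larger — but since each abelian factor $S_i$ is cyclic of prime order, the product of the abelian factors has squarefree exponent at each prime, so every subgroup is a direct product of cyclic groups of prime order, and this is routine. The rest is bookkeeping with internal direct products.
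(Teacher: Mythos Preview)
Your argument is correct and rests on the same structural facts as the paper's proof---SNAG Factor Rigidity for the non-abelian direct factors, and elementary abelian structure (subgroups of a product of prime-order cyclic groups are again such products) for the abelian ones. The core of part~(1) is identical to the paper's.

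The organization differs slightly, and your version is arguably cleaner. The paper proves (1), (2), (3) in that order by explicit construction: after peeling off the SNAG factors contained in $N$, it writes the abelian part as $(\Z_{p_1})^{n_1}\times\cdots\times(\Z_{p_k})^{n_k}$, identifies $\pi(N)$ as $(\Z_{p_1})^{m_1}\times\cdots\times(\Z_{p_k})^{m_k}$, and then explicitly chooses vector-space complements $(\Z_{p_i})^{n_i-m_i}$ to exhibit $G/N$ and the internal direct product decomposition. You instead invoke the already-proved Lemma~\ref{SpanOfGems} to obtain the normal complement $B$ and hence part~(3) immediately, and then get~(2) for free by applying~(1) to $B\lhd G$. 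This saves the explicit complement construction, at the cost of giving slightly less concrete information about the shape of $G/N$; the paper's route makes the ``rigid vs.\ fluid'' dichotomy (unique complement for SNAG factors, non-unique for the abelian part) visible in the formulas, which it uses elsewhere.
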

\begin{proof} 
Let $N \lhd G= K_1 \times \cdots \times K_n$ with the $K_i$ simple. 
Let $\pi_i: G\sur K_i$ be the projection onto the $i$th factor.  
Let $I=\{i : \pi_i(N) \neq 1, 1\leq i \leq n\}$.  We may assume 
the first $m \leq n$ factors have $\pi_i(N)\neq \1$. 
Then $N=K_1 \times \cdots \times K_m\times \1 \times \cdots \times \1$.
the first $\ell$ factors are SNAGS and the rest are prime order cyclic groups ($0\leq \ell \leq m$).
By the SNAG rigidity applied successively for each SNAG factor we have that
$$N=K_1 \times \cdots K_{\ell} \times \pi(K_{\ell+1} \times \cdots \times K_m \times \1 \times \cdots \times \1),$$
where   $\pi: G \rightarrow K_{\ell+1} \times \cdots \times K_n$ is projection morphism and
$$\pi(N) \lhd  K_{\ell+1} \times \cdots \times K_m \times \1 \times \cdots \times \1.$$
Now $ K_{\ell+1} \times \cdots \times K_m $ is a product of elementary abelian groups (direct products of prime power order groups), so we may assume $$ K_{\ell+1} \times \cdots \times K_m = (\Z_{p_1})^{n_1} \times \cdots \times (\Z_{p_k})^{n_k},$$
where the $p_i$ are pairwise distinct primes and $n_i>0$. 
It follows from the structure theory of finitely generated abelian groups that
$$\pi(N)\cong (\Z_{p_1})^{m_1} \times \cdots \times (\Z_{p_k})^{m_k},$$
with $0\leq m_i \leq n_i$.  Thus, 
$$N = K_1 \times \cdots \times K_{\ell} \times (\Z_{p_1})^{m_1} \times \cdots \times (\Z_{p_k})^{m_k} \times \1 \times \cdots \times \1.$$
This proves (1). 
Each elementary abelian group  $(\Z_{p_i})^{n_i} $ is a $n_i$-dimensional $\Z_p$ vector space 
and is an internal direct product of its $m_i$ dimensional subspace $(\Z_{p_i})^{m_i}$ and a (non-unique) $n_i-m_i$ dimensional complement $(\Z_{p_i})^{n_i-m_i} $.
It follows that $$G/N\cong \1 \times \dots \times \1 \times (\Z_{p_1})^{n_1-m_1} \times \cdots \times (\Z_{p_j})^{n_k-m_k} \times  K_{m+1} \times \cdots \times K_n,$$
where the first $\ell$ factors are $\1$ and the last $n-m$ are the factors for which $\pi_i(N)=\1$.  This shows (2), and now (3) that 
$$G=K_1 \times \cdots \times K_{\ell} \times (\Z_{p_1})^{n_1} \times \cdots \times (\Z_{p_k})^{n_k} \times K_m \times \cdots \times K_n$$  is the internal direct product of $N$ and $G/N$ follows.
\end{proof}

Remark:  Lemma~\ref{detailedSpanOfGems}(\ref{normalsubgroup})  is not true if the word ``normal'' is removed, since by Lemma~\ref{EmbedInAlternatingGroup} any finite group is a subgroup of a (simple) alternating group.\\[1ex]
 
\subsection{Characterization of the Maximal Complexity Function}\label{sec-mu-complexity}
For each finite group $G$, define the {\em hierarchical complexity} $\mu(G)$ to be the shortest length $\ell$ for which $G$ has a subnormal series 
\begin{equation*}
   \1 =V_0 \lhd V_1 \lhd \cdots \lhd V_{\ell} =G \tag{$\star$}
\end{equation*}
such that $V_{i+1}/V_i$ is isomorphic to a span of gems (i.e.,  the direct product of simple groups) for $0\leq i < \ell$.
We say a series of length $\mu(G)$ of this kind gives 
{\em decomposition of $G$ at its hierarchical complexity}.

We show $\mu$ is bounded by $\JH$ and satisfies the complexity axioms.
\begin{proposition}\label{mu-axioms}
Let $G$ be a finite group. 
\begin{enumerate}
    \item  \label{bounded} $\mu(G) \leq \JH(G) $
     \item (Initial Condition). $\mu(\1)=0$, \label{mu-init}
     \item $\mu(K) =1$ if $K$ is simple or the product of one or more simple groups. \label{mu-on-mala}
     \item  (Constructability Axiom). \label{mu-construct}
     $G$ can be constructed by iterated extension from groups on which $\mu$ is no more than 1.
    \item (Product Axiom).   \label{mu-product} If $G= H \times K$, then $\mu(G)=\max(\mu(H),\mu(K))$.
    
   For $N$ normal in $G$, 
    
   \item       (Quotient Axiom). $ \mu(G/N) \leq \mu(G)$ \label{mu-quotient}
    \item (Extension Axiom). $\mu(G) \leq \mu(N)+\mu(G/N).$ \label{mu-extension}
   
    \item (Normality Property). \label{mu-normal}
    For $N$ normal in $G$, $\mu(N)\leq \mu(G)$.
\end{enumerate}
\end{proposition}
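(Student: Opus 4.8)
The plan is to prove every part by manipulating subnormal series whose factors are spans of gems, using the two halves of the Splitting Lemma (Lemma~\ref{detailedSpanOfGems}) to track what such factors become under quotients, projections, and intersections. Parts (\ref{bounded})--(\ref{mu-construct}) are essentially immediate: a composition series of $G$ is a span-of-gems series of length $\JH(G)$, which gives (\ref{bounded}); the length-$0$ series $\1=V_0=\1$ handles (\ref{mu-init}); for (\ref{mu-on-mala}) a nontrivial span of gems $K$ has the length-$1$ series $\1\lhd K$ and cannot satisfy $\mu(K)=0$; and (\ref{mu-construct}) is nothing but the observation that a minimal series $(\star)$ realizing $\mu(G)$ exhibits $G$ as an iterated extension of its factors $V_{i+1}/V_i$, each of $\mu$-value $\le 1$ by (\ref{mu-on-mala}).

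For the quotient axiom (\ref{mu-quotient}), I would take a shortest span-of-gems series $(\star)$ for $G$ and a surjection $q\colon G\sur G/N$. Then $q(V_0)\lhd q(V_1)\lhd\cdots\lhd q(V_\ell)=G/N$, and since the composite $V_{i+1}\to q(V_{i+1})\sur q(V_{i+1})/q(V_i)$ kills $V_i$, the factor $q(V_{i+1})/q(V_i)$ is a homomorphic image of the span of gems $V_{i+1}/V_i$, hence a span of gems by Lemma~\ref{detailedSpanOfGems}(\ref{quotient}); deleting repeated terms yields a span-of-gems series for $G/N$ of length $\le\ell=\mu(G)$. The product axiom (\ref{mu-product}) then follows in the ``$\ge$'' direction by applying (\ref{mu-quotient}) to the two coordinate projections $H\times K\sur H$, $H\times K\sur K$ (and isomorphism-invariance of $\mu$); for the ``$\le$'' direction I would take shortest span-of-gems series $\1=A_0\lhd\cdots\lhd A_m=H$ and $\1=B_0\lhd\cdots\lhd B_n=K$, pad the shorter by repeating its top term, and form $A_0\times B_0\lhd A_1\times B_1\lhd\cdots\lhd H\times K$, whose $i$-th factor $(A_{i+1}/A_i)\times(B_{i+1}/B_i)$ is a direct product of two spans of gems and hence a span of gems; after deleting the repeats that arise when both coordinate factors are trivial, this has length $\le\max(m,n)$. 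For the extension axiom (\ref{mu-extension}) I would concatenate a shortest span-of-gems series for $N$ with the preimages under $q$ of a shortest span-of-gems series $\1=\bar B_0\lhd\cdots\lhd\bar B_n=G/N$: since $q^{-1}(\bar B_{j+1})/q^{-1}(\bar B_j)\cong\bar B_{j+1}/\bar B_j$, the concatenation is a span-of-gems series for $G$ of length $\mu(N)+\mu(G/N)$.

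The real content, and the step that genuinely does not follow from the preceding axioms, is the normality property (\ref{mu-normal}). Here I would intersect a shortest span-of-gems series $(\star)$ for $G$ with $N$, setting $W_i=N\cap V_i$, so that $\1=W_0\lhd W_1\lhd\cdots\lhd W_\ell=N$ (using that $V_i\lhd V_{i+1}$ forces $N\cap V_i\lhd N\cap V_{i+1}$). The point is that, because $N$ is normal in $G$, $N\cap V_{i+1}$ is normal in $V_{i+1}$; hence $(N\cap V_{i+1})V_i/V_i$ is a \emph{normal} subgroup of $V_{i+1}/V_i$, and since the latter is a span of gems, Lemma~\ref{detailedSpanOfGems}(\ref{normalsubgroup}) says this normal subgroup is itself a span of gems; but by the second isomorphism theorem it is isomorphic to $W_{i+1}/W_i=(N\cap V_{i+1})/(N\cap V_i)$. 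After deleting repeats this produces a span-of-gems series for $N$ of length $\le\mu(G)$, so $\mu(N)\le\mu(G)$. I expect this to be the only real obstacle: the naive intersection argument merely shows $W_{i+1}/W_i$ \emph{embeds} in a span of gems, which says nothing (every finite group embeds in a simple alternating group by Lemma~\ref{EmbedInAlternatingGroup}), so one must exploit the normality of $N\cap V_{i+1}$ in $V_{i+1}$ together with the ``normal subgroup'' half of the Splitting Lemma rather than its ``quotient'' half.
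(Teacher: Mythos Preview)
Your proposal is correct and follows essentially the same approach as the paper: the paper also proves (\ref{mu-quotient}) by pushing forward a minimal series, (\ref{mu-product}) by forming the product of padded series and projecting for the reverse inequality, (\ref{mu-extension}) by concatenating a series for $N$ with preimages of one for $G/N$, and (\ref{mu-normal}) by intersecting a minimal series with $N$ and invoking Noether's isomorphism theorem together with Lemma~\ref{detailedSpanOfGems}(\ref{normalsubgroup}). Your organization is in fact cleaner than the paper's (you derive the ``$\geq$'' half of the product axiom directly from the quotient axiom, and you cleanly separate the normality argument from the extension argument, whereas the paper intertwines them), but the underlying ideas are identical.
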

\begin{proof}
(\ref{bounded}) holds since a composition series for $G$ has simple groups has as its quotients the Jordan-Hölder factors of $G$, each of which is trivially a 1-fold product of simple groups. 
(\ref{mu-init}) is trivial, while  (\ref{mu-on-mala}) is immediate from the definition of $\mu$.  

(\ref{mu-construct}) The values of $\mu$ on $\1$, simple groups and their products follow immediately from the definition of $\mu$. This entails constructability by iterated extension from groups on which $\mu$ takes value 1 (cf.\ Lemma~\ref{construct-simple}).

(\ref{mu-product})  Suppose $\mu(H)=a$ and $\mu(K)=b$.
Choose minimal length subnormal series for $H$ and $K$ such that each has quotients that are  spans of gems, i.e., simple or the direct products of simple groups 
$$\1 =H_0 \lhd H_1 \lhd \cdots \lhd H_{a} =H,$$
$$\1 =K_0 \lhd K_1 \lhd \cdots \lhd K_{b} =K,$$
We may assume $a \leq b$.  If $a < b$, define $H_{a+1}= \ldots = H_{b}=H$.     
Now consider the series:
$$\1 =H_0 \times K_0 \lhd H_1 \times K_1 \lhd \cdots \lhd H_{b} \times K_{b}= H \times K.$$
We have $$(H_{i+1}\times K_{i+1}) / (H_i \times K_i) \cong 
H_{i+1} / H_i \times K_{i+1} / K_i .$$  By choice of the series for $H$ and $K$ it follows that these factors are products of finite simple groups.   This shows $\mu(H\times K) \leq \max\{ \mu(H), \mu(K)\}$.
On the other hand, if $G=H\times K$ had a shorter series with direct products of simple groups as its quotients $G_{i+1}/G_i$,
$$\1=G_0 \lhd \cdots \lhd G_\ell=H\times K,$$ then
we could project on the first coordinate with $\pi_H: H\times K \sur H$ to obtain a subnormal series with groups $\pi_H(G_i) \leq H$: Since $\pi_H(gG_i)=\pi_H(g)\pi_H(G_i)$ for all $g\in G_{i+1}$, the projection $\pi_H$ induces a morphism from $G_{i+1}/G_i$ onto $\pi_H(G_{i+1})/\pi_H(G_i)$. By Lemma~\ref{SpanOfGems} (Splitting Lemma I), since $G_{i+1}/G_i$
is a direct product simple groups so is $\pi_H(G_{i+1})/\pi_H(G_i)$.
Now $\pi(G_{i+1})/\pi_H(G_i)$ are the quotients of a subnormal series for $H$ of length less than $\max\{\mu(H),\mu(K)\}$. Similarly, for  $K$. This would contradict the minimality of the chosen of series for $H$ or $K$.  Therefore,
$\mu(H\times K) = \max\{ \mu(K), \mu(K)\}$.

(\ref{mu-quotient})
On the other hand,  applying the natural morphism from $G$ to $G/N$ to the series for $G$ yields a series for $G/N$ in which the successive quotients are homomorphic images of the quotients from the original series \cite[pp. 102-3, first part of proof of Theorem 5.16]{Rotman}. Since homomorphic images of direct products of simple groups are also direct products of simple groups by Lemma~\ref{SpanOfGems}, this series shows $G/N$ has $\mu(G/N) \leq \mu(G)$.

(\ref{mu-extension}) To establish the extension axiom we use strong induction on $\mu(G)$. Let $N\lhd G$. If $\mu(G)=0$, then $G=\1$ and the result is trivial. If  $\mu(G)=1$, then $G$ is the direct product of simple groups. By Lemma~\ref{detailedSpanOfGems}~(Splitting Lemma~I - Detailed Variant), 
$N$ and $G/N$ are each also direct products of simple groups with $G\cong N \times G/N$. So $\mu(N)$ and $\mu(G/N)$ are less than or equal to 1. By (\ref{mu-product}) the direct product axiom for $\mu(G)=\max\{\mu(N),\mu(G/N)\}=1$, and also $\mu(G) \leq \mu(N)+\mu(G/N)$ holds.  
Next suppose the extension property holds for all groups on which  with $\mu$ is less than $\ell$.   Let $G$ be such that $\mu(G)=\ell$,   and take a subnormal minimal series of $G$ whose quotients are products of simple groups. 

Let $N \lhd G$ be a proper normal subgroup.   We then have a series for $N$ obtained from the series for $G$ of length $\ell$ 
as in ($\star$) above.

$$\1 =V_0 \cap N \lhd V_1 \cap N \lhd \cdots \lhd V_{\ell}\cap N =N,$$

Then
each factor is 
$$W_{i}= (V_{i+1}\cap N) / (V_i \cap N).$$ Now by  Noether's 
isomorphism theorem  that $HK/K\cong H/(H\cap K)$ for $K$ normalized by $H$, we can take $H=V_{i+1}\cap N$ and $K=V_i$ (which is normal in $V_{i+1}$ hence normalized by $V_{i+1}\cap N$), to conclude that 
$$(V_{i+1}\cap N)V_i/V_i \cong (V_{i+1}\cap N)/(V_i\cap (V_{i+1}\cap N)) = (V_{i+1}\cap N)/(V_i \cap N)=W_{i}.$$
But note that 
$$(V_{i+1}\cap N)V_i/V_i \,\, \leq \,\, V_{i+1}/V_i,$$
since 
$V_{i+1}\cap N$ and $V_i$ are subgroups of $V_{i+1}$. 
Moreover,  since $(V_{i+1}\cap N)V_i$  is normal in $V_{i+1}$, 
$$(V_{i+1}\cap N)V_i/V_i \,\, \lhd \,\,  V_{i+1}/V_i,$$ by
the Correspondence Theorem (e.g.\cite[Theorem~2.28]{Rotman}).
By choice of the series for $G$,  $V_{i+1}/V_i$ is the direct product of simple groups, so it follows by Lemma~\ref{detailedSpanOfGems} (Splitting Lemma I)
that $(V_{i+1}\cap N)V_i/V_i\cong W_i$ is the direct product of simple groups.   
Thus we have obtained series for $N$ witnessing the fact that $\mu(N) \leq \ell =\mu(G)$.

Here is an alternative proof of
(\ref{mu-extension}). Now we show $\mu(G) \leq \mu(N) + \mu(G/N)$:
Taking minimal series of whose whose groups are direct products of simple groups for $N$ and $G/N$

$$\1 =A_0  \lhd A_1   \lhd \cdots \lhd A_{a}  = N,$$
$$\1 =B_0  \lhd B_1  \lhd \cdots \lhd B_{b} =G/N,$$
and letting $\varphi:G\sur G/N$ be the natural quotient map, to obtain
$$N = \varphi^{-1}(B_0) \lhd \varphi^{-1}(B_1) \lhd \cdots \lhd \varphi^{-1}(B_b) = G,$$
with quotients $\varphi^{-1}(B_{i+1})/\varphi^{-1}(B_i)$ isomorphic to  $B_{i+1}/B_i$ by the Correspondence Theorem (e.g.\ \cite[Theorem~2.28]{Rotman}).  Concatenating the first and third series above yields a subnormal series for $G$,  with all quotients direct products of simple groups, establishing $\mu(G) \leq  a + b = \mu(N)+\mu(G/N)$. \end{proof}

\begin{theorem}\label{mucpx}
The hierarchical complexity measure $\mu$ satisfies the complexity axioms and dominates any other complexity function $\c$.   Hence, $\mu$ is $\cx$, the unique maximal complexity measure on groups.
\end{theorem}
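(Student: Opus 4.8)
The plan is to separate the statement into two halves. The first half — that $\mu$ satisfies all the complexity axioms — is already done: it is exactly the content of Proposition~\ref{mu-axioms}, which verifies the initial condition, constructability, product, quotient, and extension axioms (indeed also the Normal property, which is not required of a complexity function). So in the proof I would simply invoke Proposition~\ref{mu-axioms} and turn to the substantive claim, namely that $\mu$ pointwise dominates every complexity function.

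For the domination, let $\c$ be an arbitrary function satisfying the complexity axioms and let $G$ be a finite group. The first step is to observe that $\c$ takes value at most $1$ on every span of gems: if $K\cong K_1\times\cdots\times K_n$ with each $K_i$ simple, then $\c(K_i)\leq 1$ for each $i$ by Lemma~\ref{simplecpx} (constructability), and hence $\c(K)=\max_i\c(K_i)\leq 1$ by the product axiom, the case $n=0$ being covered by $\c(\1)=0$. The second step is to fix a subnormal series $\1=V_0\lhd V_1\lhd\cdots\lhd V_{\mu(G)}=G$ realizing $\mu(G)$, so that every quotient $V_{i+1}/V_i$ is a span of gems, and to climb it by induction using the extension axiom: $\c(V_1)=\c(V_1/V_0)\leq 1$, and $\c(V_{i+1})\leq\c(V_i)+\c(V_{i+1}/V_i)\leq\c(V_i)+1$, so that $\c(V_k)\leq k$ for all $k$ and in particular $\c(G)\leq\mu(G)$.

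Finally I would assemble the two halves. Since $\mu$ is itself a complexity function by Proposition~\ref{mu-axioms}, it is dominated by the maximal one, so $\mu(G)\leq\cx(G)$; conversely, the domination just proved, applied with $\c=\cx$, gives $\cx(G)\leq\mu(G)$. Hence $\mu=\cx$, which by the uniqueness established in Theorem~\ref{uniqueCpx} is the unique maximal complexity measure on finite groups.

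I do not expect a serious obstacle here. The only points needing care are the base case of the induction and the implicit appeal to $\c(\1)=0$ for the empty span of gems; the genuinely delicate structural facts — that spans of gems are closed under passing to normal subgroups and to quotients, which is what makes $\mu$ well behaved in the first place — have already been settled in the Splitting Lemmas and in Proposition~\ref{mu-axioms}. If anything deserves a flag, it is that the definition of $\mu(G)$ must actually be witnessed, i.e.\ that every finite group admits \emph{some} subnormal series with span-of-gems quotients, but this is immediate from the existence of a composition series (every Jordan--Hölder factor being a one-fold product of simple groups).
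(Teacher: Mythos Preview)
Your proposal is correct and follows essentially the same approach as the paper: invoke Proposition~\ref{mu-axioms} for the axioms, then dominate an arbitrary $\c$ by climbing a $\mu$-witnessing series using the extension axiom together with $\c\leq 1$ on spans of gems (via Lemma~\ref{simplecpx} and the product axiom), and finish by appealing to Theorem~\ref{uniqueCpx}. Your induction along the fixed series is in fact slightly cleaner than the paper's induction on $\mu(G)$, since you avoid arguing that the truncated series is itself minimal for $G_{\ell-1}$; but the underlying argument is the same.
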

\begin{proof} 
By Proposition~\ref{mu-axioms}, $\mu$ satisfies the complexity axioms. Let $\c$ be any complexity measure satisfying the axioms. 
Then $\mu(\1)=\c(\1)=0$.   By definition of $\mu$,  if $\mu(G)=1$ then $G$ is a non-trivial direct product of simple groups.  For simple groups $K$, $\mu(K)=1 \geq \c(\1)$ by Lemma~\ref{simplecpx} for $\c$.  Thus,  $\c(G)$ is the $\max$ of $\c$ on the simple factors of $G$ by the product axiom, hence at most one, i.e, $\mu(G)=1\geq \c(G)$.

We now continue by induction on $\mu(G)=\ell > 1$:

Take a shortest series for $G$ with quotients isomorphic to direct products of simple groups :

$$\1 =G_0 \lhd G_1 \lhd \cdots \lhd G_{\ell-1} \lhd G_{\ell}=G =G,$$

This series must a shortest one for $G_{\ell-1}$ of this type, lest $G$ had a shorter such series. Therefore $$\mu(G_{\ell-1})=\ell-1$$
and by induction hypothesis
$\c(G_{\ell-1})\leq \mu(G_{\ell-1})$, and so by the extension axiom for $\c$
$$\c(G) \leq \c(G_{\ell-1})+\c(G/G_{\ell-1}) \leq (\ell-1)+1 =\mu(G).$$ 

We conclude by induction that $\mu$ dominates an arbitrary complexity function $\c$. Thus, by  Theorem~\ref{uniqueCpx}, $\mu=\cx$
\end{proof}
 
\begin{corollary} A nontrivial group has hierarchical complexity 1 if and only if it is a direct product of simple groups. That is, for a finite group $G\neq \1$, $$G \in \Span(\SIMPLE) \, \,  \Leftrightarrow \, \, \cx(G)=1.$$
\end{corollary}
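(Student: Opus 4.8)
The plan is to reduce the whole statement to the identification $\cx = \mu$ proved in Theorem~\ref{mucpx}, after which both implications become short unwindings of the definition of hierarchical complexity $\mu$ together with the elementary facts collected in Proposition~\ref{mu-axioms}.

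For the forward implication, I would argue as follows. Suppose $G \neq \1$ and $G \in \Span(\SIMPLE)$, i.e.\ $G$ is a (nonempty) direct product of simple groups. Then $G$ is a span of gems, so Proposition~\ref{mu-axioms}(\ref{mu-on-mala}) gives $\mu(G) = 1$, and Theorem~\ref{mucpx} turns this into $\cx(G) = 1$. (If one prefers to bypass $\mu$ here: each simple factor has $\cx \le 1$ by Lemma~\ref{simplecpx}, the product axiom yields $\cx(G) = \max$ of these values $\le 1$, and $\cx(G) \ge \delta(G) = 1$ for $G \neq \1$ since $\delta$ is a complexity function by Lemma~\ref{trivial} and $\cx$ is maximal; hence $\cx(G) = 1$. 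But the $\mu$-route is cleaner and is what I would write.)

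For the reverse implication, suppose $G \neq \1$ and $\cx(G) = 1$. By Theorem~\ref{mucpx} this is exactly $\mu(G) = 1$, so by the definition of $\mu$ the group $G$ has a subnormal series of the prescribed kind of length $1$, i.e.\ $\1 = V_0 \lhd V_1 = G$ with $V_1/V_0$ isomorphic to a span of gems. Since $V_1/V_0 = G/\1 \cong G$, we conclude that $G$ itself is isomorphic to a direct product of simple groups, i.e.\ $G \in \Span(\SIMPLE)$, as desired.

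There is no real obstacle here; the only thing needing care is the bookkeeping around the trivial group, which is exactly why the hypothesis $G \neq \1$ appears on both sides. In the forward direction, $G \neq \1$ is what guarantees the length-$1$ series is genuinely shortest (a series of length $0$ would force $G = \1$), so that $\mu(G) = 1$ and not $0$; in the reverse direction, $G \neq \1$ is needed because $\1 \in \Span(\SIMPLE)$ (the empty product) while $\cx(\1) = 0 \neq 1$. All the substantive work — that spans of gems split along normal subgroups and quotients, and that $\mu$ satisfies and dominates the axioms — has already been carried out in Section~\ref{1stSplittingLemmas}, Proposition~\ref{mu-axioms}, and Theorem~\ref{mucpx}.
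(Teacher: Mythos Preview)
Your proof is correct and matches the paper's intended approach: the corollary is stated there without proof precisely because it is immediate from Theorem~\ref{mucpx} ($\cx=\mu$), Proposition~\ref{mu-axioms}(\ref{mu-on-mala}), and the definition of $\mu$, exactly as you unwind it. Your additional bookkeeping on the trivial group and the alternative axiomatic derivation of the forward direction are accurate and harmless.
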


\noindent We observe that $\cx$ satisfies the normal property though it was not assumed: 
\begin{theorem}[Normal Property of Hierarchical Complexity]\label{cpxNorm}
If $N$ is a normal subgroup of $G$, then $\mu(N)\leq \mu(G)$. Hence, $\cx(N)\leq \cx(G)$. 
\end{theorem}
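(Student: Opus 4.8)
The plan is to push a minimal span-of-gems series of $G$ into $N$ by intersection --- essentially the construction already carried out in the first proof of the extension axiom in Proposition~\ref{mu-axioms}(\ref{mu-extension}) --- and then invoke Theorem~\ref{mucpx} to pass from $\mu$ to $\cx$. Concretely, I would fix a shortest subnormal series
$$\1 = V_0 \lhd V_1 \lhd \cdots \lhd V_\ell = G, \qquad \ell = \mu(G),$$
with every quotient $V_{i+1}/V_i$ a span of gems, and form the intersected chain
$$\1 = V_0\cap N \lhd V_1\cap N \lhd \cdots \lhd V_\ell\cap N = N.$$
Each inclusion here is subnormal, since $V_i \lhd V_{i+1}$ gives $V_i\cap N = V_i\cap(V_{i+1}\cap N)\lhd V_{i+1}\cap N$. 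If every quotient $(V_{i+1}\cap N)/(V_i\cap N)$ is again a span of gems, then this series (with any repeated terms deleted; the trivial group is the $n=0$ span of gems, so such terms are harmless) witnesses $\mu(N)\leq \ell=\mu(G)$, and $\cx(N)\leq\cx(G)$ follows at once from $\mu=\cx$.

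The key step is to see each such quotient as a normal subgroup of the corresponding span of gems $V_{i+1}/V_i$. I would apply Noether's isomorphism theorem ($HK/K\cong H/(H\cap K)$) inside $V_{i+1}$ with $H=V_{i+1}\cap N$ and $K=V_i$ (normal in $V_{i+1}$, hence normalized by $H$); since $H\cap K = V_i\cap N$, this gives
$$(V_{i+1}\cap N)/(V_i\cap N)\;\cong\;(V_{i+1}\cap N)V_i\,/\,V_i .$$
The right-hand side is a subgroup of $V_{i+1}/V_i$, and in fact a normal one: because $N\lhd G$ we have $V_{i+1}\cap N \lhd V_{i+1}$, so the product $(V_{i+1}\cap N)V_i$ of two normal subgroups of $V_{i+1}$ is normal in $V_{i+1}$, and this normality descends to $V_{i+1}/V_i$ by the Correspondence Theorem. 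Hence $(V_{i+1}\cap N)/(V_i\cap N)$ is isomorphic to a normal subgroup of the span of gems $V_{i+1}/V_i$, and by Lemma~\ref{detailedSpanOfGems}(\ref{normalsubgroup}) it is itself a span of gems, which is exactly what is needed.

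The genuinely load-bearing hypothesis --- and the reason this is a statement about \emph{normal} subgroups --- is the normality of $N$, used to get $V_{i+1}\cap N\lhd V_{i+1}$ and hence the \emph{normal} embedding of the intermediate quotient into $V_{i+1}/V_i$; without it the analogue fails, since every finite group embeds into a simple alternating group (Lemma~\ref{EmbedInAlternatingGroup}), which has complexity $1$. No induction on $\mu(G)$ is required: the argument is a single pass over the series, so I expect the only ``obstacle'' to be the careful bookkeeping of the three normality facts $V_i\cap N\lhd V_{i+1}\cap N$, $V_{i+1}\cap N\lhd V_{i+1}$, and $(V_{i+1}\cap N)V_i\lhd V_{i+1}$, together with the correct invocation of the Splitting Lemma to guarantee each quotient really is a span of gems.
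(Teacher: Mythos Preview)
Your proposal is correct and is exactly the argument the paper carries out in the proof of Proposition~\ref{mu-axioms}(\ref{mu-extension})/(\ref{mu-normal}): intersect a minimal span-of-gems series for $G$ with $N$, use Noether's isomorphism theorem and the Correspondence Theorem to identify each quotient $(V_{i+1}\cap N)/(V_i\cap N)$ with a normal subgroup of $V_{i+1}/V_i$, and invoke the Splitting Lemma (Lemma~\ref{detailedSpanOfGems}) to conclude it is a span of gems. The paper's one-line proof of Theorem~\ref{cpxNorm} is phrased in the opposite direction (``a minimal series for $N$ can be extended to one for $G$''), but the real work is the intersection argument you reproduce, which the paper has already recorded in Proposition~\ref{mu-axioms}.
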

\begin{proof}
If $N$ is normal in $G$, a minimal subnormal series for $N$ with quotients all spans of gems can be extended to one for $G$. Thus $\mu(N) \leq \mu(G)$.  By Theorem~\ref{mucpx}, $\mu=\cx$. 
\end{proof}

Do the complexity axioms, which do not include the normal property, imply it for an arbitrary $\c$ satisfying them? 
No, the normal subgroup property is independent of the other axioms (as shown in Section~\ref{normal}). However, it holds  for $\mu=\cx$ and many other important complexity functions~(see Sections~\ref{solv} and \ref{der-fit-solv-cpx}).

\subsection{Computing Hierarchical Complexity}

Let $G$ be a finite group. By Theorem~\ref{mucpx},  $\cx(G)=\mu(G)$ can be computed by examining all subnormal chains whose quotients are spans of gems to find one of minimal length.  
To make determination of $\cx(G)$ easier,  we can often compute or bound complexity of groups already from the axioms. 
Moreover, the normal property, as well as upper and lower bounds on $\cx$ can simplify this further (Theorems~\ref{cpxNorm}, \ref{lowerbounds}   \& \ref{upperbounds}, and other results).

\begin{example}[Span of Gems]
By definition of $\mu$,  if $\mu(G)=1$ then $G\neq \1$ is  a direct product  $G_1 \times \cdots\times G_n$  of finite simple groups $G_i$ ($1\leq i \leq  n$, $n\geq 1)$. 
So by Theorem~\ref{mucpx}, $\cx(G)=1$ if and only if $G$ is of this form, i.e., $G$ is a ``span of gems". 

\end{example}

\begin{example}[Two Jordan-H\"older Factors]\label{2JH}
 Let $G$ be an extension of $Q$ by $N$ with $Q$ and $N$ simple. We have $\cx(Q)=\cx(N)=1$.   By the extension axiom $\cx(G) \leq \cx(Q) + \cx(N)=2$.  
  If $G$ is the direct product $Q \times N$, $\cx(G)=1$ by the
product axiom.  Otherwise, $\cx(G)=2$ since the unique subnormal series  $\1 \lhd N \lhd G$ has length two, and quotients are spans of gems since they are simple.
\end{example}

\begin{example}[Symmetric Groups]\label{SymExample} Let $S_n$ be the symmetric group of
all permutations on $\{1,\ldots, n\}$, and let $A_n$ be the alternating group, the subgroup  of even permutations. 
If $n=2$, $S_2\cong\Z_2$ is simple and has complexity $1$. 
Now $S_n/A_n \cong \Z_2$ and for $2< n\neq 4$, $A_n$ and $\Z_2$ are simple, so $\cx(S_n))=2$ by example~\ref{2JH}. 
For $n=4$, $\cx(S_4)\leq \cx(A_4)+\cx(\Z_2)$.  $A_4$ has a normal subgroup $\Z_2\times \Z_2 \cong \langle (1,2)(3,4), (1,3),(2,4)\rangle$,
with quotient $\Z_3$, so $\cx(A_4)\leq \cx(\Z_2\times \Z_2) +\cx(\Z_3)=2$ by the extension and product axioms, and the simplicity
of $\Z_2$ and $\Z_3$.  It follows $\cx(S_4)\leq 3$.   
However, inspection shows $1\lhd  \langle \Z_2\times \Z_2\rangle\lhd  A_4 \lhd S_4$ is a unique minimal subnormal series for $S_4$ with quotients all spans of gems, so $\cx(S_4)=3$.
\end{example}
\begin{example} \label{ZpnExample}
Let $\Z_{p^n}$ denote the cyclic group generated by $x$ of order $p^n$ for $p$ prime, $n\geq 1$.
Then $\Z_{p^n}$ is has a normal subgroup $\langle x^p \rangle$ isomorphic to $\Z_{p^{n-1}}$ with quotient $\Z_{p^n}/\Z_{p^{n-1}}\cong \Z_p$,
so by the extension axiom, $\cx(\Z_{p^n})\leq \cx(\Z_{p^{n-1}}) + \cx(\Z_p)$. Since $\cx(\Z_p)=1$, it follows by induction that
$\cx(\Z_{p^n})\leq n$.  (We shall see that $n$ is also a lower bound in example~\ref{Zpwr}.)
\end{example}

\noindent {\em The number of factor groups in any minimal decomposition at complexity is $\mu(G)$ but neither the order nor the isomorphism classes of the quotients need to be the same:}

\begin{counterexample}[Order of Factors Not Unique at Complexity] The group of symmetries of the square, $D_4\cong \Z_2\wr \Z_2$ has
$\mu(D_4)=2$ but the quotients $\Z_2$ and $\Z_2\times \Z_2$ can appear in either order in decompositions at complexity.\\ \label{D4}
\begin{tabular}{|c|c|}
                                         
	\hline
                                         
	Group &  Subnormal Chain \\
                                         
	\hline

	$D_4$ & = $\langle (1,2), (3,4), (1,3)(2,4)\rangle $ \\ 
	$|{}$ & quotient $\Z_2\times \Z_2 $ \\ 
	$\Z_2$ &$=\langle   (1,2)(3,4)\rangle $ \\ 
	$|{}$ & quotient $\Z_2$ \\ 
	$\1$ & = $\1$\\

	\hline
                                         
\end{tabular}
\begin{tabular}{|c|c|}
                                         
	\hline
                                         
	Group & Subnormal Chain \\
                                         
	\hline

	$D_4$ & =$\langle  (1,2), (3,4), (1,3)(2,4)\rangle $ \\ 
	$|{}$ & quotient $\Z_2$\\ 
	$\Z_2\times \Z_2$ & = $\langle (1,2)(3,4), (1,2) \rangle $\\ 
		$|{}$ & quotient $\Z_2 \times \Z_2$ \\ 
	$\1$ &  $=\1$ \\

	\hline
                                         
\end{tabular}
\end{counterexample}

\begin{counterexample}[Isomorphism Types of Factors Not Unique at Complexity]\label{FactorsNotUnique}
Also the isomorphism types of the quotients are not unique for decompositions at complexity,
for $\Z_2\wr\Z_4$. Inspection of all chains of subnormal subgroups shows complexity $\mu(\Z_2\wr\Z_4)=3$, where all three quotients may be isomorphic to $\Z_2\times \Z_2 \times \Z_2$,
or the non-isomorphic $\Z_2$, $\Z_2\times \Z_2$, $\Z_2\times\Z_2\times \Z_2$ (with order not unique) for different subnormal chains, or even   $(\Z_2)^4$, $\Z_2$, $\Z_2$ for
the subnormal series
$$\1\lhd (\Z_2)^4\cong (\Z_2 \times \Z_2 \times \Z_2 \times \Z_2) \rtimes \1 \lhd (\Z_2 \times \Z_2 \times \Z_2 \times \Z_2) \rtimes \Z_2 \lhd \Z_2\wr \Z_4,$$
Either $\Z_2$ or $\Z_2\times \Z_2$ must occur as the topmost quotient of $\Z_2\wr \Z_4$, but with this 
constraint all possible orderings of direct products of the six $\Z_2$ Jordan-H\"older factors of $\Z_2\wr\Z_4$ occur in some decomposition at complexity.  Here are some more examples:

\begin{tabular}{cl}

 $\Z_2\wr\Z_4 = \langle (1,2), (3,4), (5,6), (7,8), (1,3,5,7)(2,4,6,8) \rangle$ \\ 
$|$ & quotient $\Z_2$  \\ 
$ (\Z_2 \times \Z_2 \times \Z_2) \rtimes \Z_4 \cong \langle (5,6)(7,8), (3,4)(5,6), (1,2)(3,4), (1,3,5,7)(2,4,6,8) \rangle$  \\ 
	$|$ & quotient  $\Z_2 \times \Z_2$  \\ 
$\Z_2 \times \Z_2 \times \Z_2  =\langle (1,2)(5,6), (3,4)(7,8), (1,5)(2,6)(3,7)(4,8) \rangle$

\end{tabular}\\[2ex]

Also, the subnormal series \\

\begin{tabular}{cl}

	 $\Z_2\wr\Z_4 = \langle (1,2), (3,4), (5,6), (7,8), (1,3,5,7)(2,4,6,8) \rangle$ \\ 

	$|$ &  quotient $\Z_2 \times \Z_2 $  \\
 $\Z_2 \times D_4 \cong \langle (5,6)(7,8), (3,4)(5,6), (1,2)(3,4), (1,5)(2,6)(3,7)(4,8) \rangle$ \\

\end{tabular}\\[2ex]

 may be continued to a subnormal series at complexity 3 by various normal subgroups of $\Z_2\times D_4$ yielding non-isomorphic quotients in different orders: \\

\begin{tabular}{cl}
	$|$ & quotient $\Z_2 $  \\
 $\Z_2 \times \Z_2 \times \Z_2  \cong \langle (1,2)(3,4)(5,6)(7,8), (1,5)(2,6)(3,7)(4,8), (3,4)(7,8) \rangle$ \\[1ex] 

or \\

	$|$ & quotient $\Z_2 \times  \Z_2$  \\ 
 $\Z_2 \times  \Z_2 \cong \langle  (1,2)(3,4)(5,6)(7,8), (1,5)(2,6)(3,8)(4,7) \rangle$ \\[1ex]
 
 or \\
	$|$ & quotient $\Z_2 \times \Z_2 \times \Z_2$  \\
$ \Z_2 \cong  \langle (1,2)(3,4)(5,6)(7,8) \rangle $ \\

\end{tabular}\\[2ex]

In fact, there are 15 different  subnormal subseries at complexity for $\Z_2\wr\Z_4$.
\end{counterexample}

\begin{remark}[Diversity of Constructions at Complexity] \hfill\\

\vspace{-0.5cm}
\begin{enumerate}
\item  There may be multiple ways to build a group by a {\em minimal number} of extensions from simple groups and their direct products (as Counterexamples~\ref{D4} and \ref{FactorsNotUnique} show).
In other words, `the' manner of constructing a group $G$ by iterated extension even {\em in a minimal way} (i.e., at complexity) is not always unique in terms of type of `pieces' and the order of extension to constitute the `whole'. There can be multiple dissimilar solutions to building $G$ using $\cx(G)$ pieces of complexity~1 in terms of how Jordan-H\"older factors are distributed amongst the `pieces' (i.e., constituents of the iterated extension). 
\item Iterated extensions using isomorphic pieces in the same order may correspond to essentially different minimal constructions of $G$. For example, in  the first 3-m\={a}l\={a} minimal decomposition of $\Z_2\wr\Z_4$ in Counterexample~\ref{FactorsNotUnique}, the normal group of $ (\Z_2 \times \Z_2 \times \Z_2 \times \Z_2) \rtimes \Z_2 =\langle (1,5)(2,6)(3,7)(4,8), (7,8), (5,6), (3,4), (1,2) \rangle$ can be replaced by $\langle (3,4)(7,8), (1,2)(5,6), (5,6)(7,8) \rangle\cong \Z_2\times \Z_2 \times \Z_2$, to yield a minimal construction of $\Z_2\wr\Z_4$ at complexity
with the same constituents $\Z_2$, $(\Z_2)^2$,  $(\Z_2)^3$ in the same order as  in the second decomposition in Counterexample~\ref{FactorsNotUnique} above. Although built from the same pieces $(\Z_2)^2$ and  $(\Z_2)^3$ in the same order, the middle groups are of the form $(\Z_2 \times \Z_2 \times \Z_2) \rtimes \Z_4$ and $(\Z_2 \times \Z_2 \times \Z_2 \times \Z_2) \rtimes \Z_2$ but not isomorphic, since as can be shown they have centers of size 2 and 4, respectively. Nevertheless, $\Z_2$ can be extended by either of these middle groups to yield a minimal construction of $\Z_2\wr\Z_4$. 
\end{enumerate}
\end{remark}

\begin{example}[Nilpotent Groups]\label{nilpotentcpx}
Since a nilpotent group $N$ is a direct product $N=P_1\times \cdots \times P_k$ of its Sylow $p$-groups (e.g., \cite[Theorem~10.3.4]{Hall}),   by the product axiom, $\cx(N)$ is the maximum complexity $\cx(P_i)$.  This reduces the computation of the complexity of nilpotent groups to computing complexity of $p$-groups. 
\end{example}

\begin{example}[Solvable Group Complexity Bounds]
If $G$ is a solvable group, $G$ has a minimal length subnormal series in which quotients $N_i$ are nilpotent. The length of such a series is 
the Fitting height of $G$  (see also Section~\ref{der-fit-solv-cpx}, Rhodes' Lemma~\ref{RhodesLemma}). 
Refining this series using  minimal length series for the $N_i$ whose quotients are spans of gems from example~\ref{nilpotentcpx} yields a subnormal series whose length is an upper bound for $\cx(G)$.
By the extension property  of complexity,
$$\cx(G)\leq \sum_{i=1}^n \cx(N_i)  \leq \sum_{i=1}^n\sum_{j=1}^{k_j} \max\{\cx(P_1^{(i)}), \ldots , \cx(P_{k_j}^{(i)})\},$$
where $P_1^{(i)}, .., P_{k_j}^{(i)}$ ($k_j\geq 1$) denote the Sylow $p$-subgroups of the nilpotent group $N_i$ as in example~\ref{nilpotentcpx}. 
\end{example}

\subsection{Wreath Product of Complexity 1 Groups}

We can apply some of what we have just seen to compute the (hierarchical) complexity of the wreath product of two complexity 1 groups.

\begin{proposition}\label{wreathcpx1}
If $N$ and $Q$ have hierarchical complexity 1, then the wreath product $N\wr Q$ has hierachical complexity 2. 
In particular,  if $N$ and $Q$ are simple groups, their wreath product  has complexity 2. 
\end{proposition}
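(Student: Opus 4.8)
The plan is to prove the two inequalities $\cx(N\wr Q)\le 2$ and $\cx(N\wr Q)\ge 2$ separately, using throughout that complexity-$1$ groups are exactly the nontrivial spans of gems (the Corollary to Theorem~\ref{mucpx}), so in particular $N\ne\1$ and $Q\ne\1$. For the upper bound I would write $N\wr Q=N^Q\rtimes Q$ and take the base group $B=N^Q$, which is normal with quotient $Q$. Since $N$ has complexity $1$ it is a direct product of simple groups, hence so is $B=N^Q$ (a finite direct product of copies of $N$), and $B\ne\1$, so $\cx(B)=1$; the extension axiom (Theorem~\ref{uniqueCpx}, Proposition~\ref{mu-axioms}) then gives $\cx(N\wr Q)\le\cx(B)+\cx(Q)=1+1=2$.

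For the lower bound, since $N\wr Q\ne\1$ we already have $\cx(N\wr Q)\ge 1$, so it suffices to show $N\wr Q$ is not a span of gems. Suppose it were. Applying the Span of Gems Splitting Lemma~I (Lemma~\ref{SpanOfGems}) to the normal subgroup $B=N^Q$ produces a normal complement $C\lhd N\wr Q$ with $N\wr Q=B\times C$ as an internal direct product. Then $B\cap C=\1$ and, both being normal, $[B,C]\le B\cap C=\1$, so $C\le C_{N\wr Q}(B)$. The crux is then the centralizer identity $C_{N\wr Q}(B)=Z(B)\ (\le B)$: conjugation by an element of the base $B$ fixes every coordinate subgroup $N_q$ of $B$ setwise, whereas conjugation by an element $t$ of the top copy of $Q$ permutes the coordinate subgroups according to the translation action of $Q$ on the index set $Q$, which is free, so an element centralizing $B$ must have trivial $Q$-part, and an element of $B$ centralizing $B$ lies in $Z(B)$. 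Hence $C\le B$, so $C=B\cap C=\1$, forcing $N\wr Q=B=N^Q$ — impossible, since $|N\wr Q|=|N|^{|Q|}\,|Q|>|N^Q|$ as $|Q|>1$. This contradiction yields $\cx(N\wr Q)\ge 2$, and combined with the upper bound $\cx(N\wr Q)=2$.

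The ``in particular'' clause is then immediate, since simple groups have complexity $1$ (Lemma~\ref{simplecpx} together with the function $\delta$ of Lemma~\ref{trivial}, or Proposition~\ref{mu-axioms}(\ref{mu-on-mala})). I expect the main obstacle to be exactly the centralizer computation $C_{N\wr Q}(N^Q)=Z(N^Q)$: carrying it out carefully in the paper's (right-action) wreath-product conventions, i.e.\ verifying that conjugation by a base element preserves each coordinate subgroup setwise while conjugation by a nontrivial top element moves them (freeness of the translation action) — this is precisely why the semidirect complement $Q$ fails to be normal, which is what would be needed for $N\wr Q$ to split off $Q$ as a direct factor. A minor point is to keep this argument uniform over both the case where $N$ is abelian (e.g.\ $\Z_2\wr\Z_2\cong D_4$, with $\cx=2$ as in Counterexample~\ref{D4}) and the case where $N$ has nonabelian composition factors; the single centralizer computation handles both with no case split.
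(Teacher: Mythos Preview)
Your proof is correct and follows the same overall strategy as the paper: both obtain the upper bound $\cx(N\wr Q)\le 2$ from the extension and product axioms applied to the base-group/quotient decomposition $N^Q\lhd N\wr Q$, and both rule out $\cx(N\wr Q)=1$ by invoking Splitting Lemma~I (Lemma~\ref{SpanOfGems}) to produce a normal complement to the base group $B=N^Q$ and then showing that complement must be trivial.

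The difference lies only in how the complement is shown to be trivial. The paper writes an arbitrary element $(c_1,q)$ of the top copy of $Q$ as a product of something in $B$ and something in the complement, and then performs an explicit wreath-product coordinate computation to conjugate the complement-part into $B$, forcing it to be $1$. You instead argue structurally: a normal complement $C$ must commute with $B$, so $C\le C_{N\wr Q}(B)$; since the translation action of $Q$ on the index set is free, any centralizing element has trivial $Q$-part, giving $C_{N\wr Q}(B)=Z(B)\le B$ and hence $C=C\cap B=\1$. Your centralizer argument is cleaner and sidesteps the coordinate bookkeeping; the paper's direct computation is more hands-on but reaches the same contradiction. Both arguments work uniformly whether $N$ is abelian or not.
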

\begin{proof}
The wreath product $W=N\wr Q$ is an extension of $Q$ by the $|Q|$-fold direct product $N^Q= N \times \cdots \times N$,
and is a semidirect product $N^Q\rtimes Q$.
By the extension and product axioms $\cx(W)\leq \cx(Q) + \cx(N^Q) = \cx(Q)+\cx(N)$.  

If $N$ and $Q$ are spans of gems (complexity 1),  then $0\neq \cx(W)\leq 2$. We show $\cx(W)=2$. 
Suppose for a contradiction that $\cx(W)=1$ then we would have $N\wr Q $ a span of gems.
Consider the natural morphism $\pi: W \sur Q$. The kernel of $\pi$ is $N^Q\rtimes\1 \lhd W$ and $\ker(\pi)\cong N^Q$. 
By Splitting Lemma~I (Lemma~\ref{SpanOfGems}),  $W$ is an internal direct product $(N^Q\rtimes\1) \times B$, for $B$ a normal complement to $(N^Q\rtimes\1)$ in $W$, 
$B\lhd W$ with $B\cap (N^Q\rtimes\1) $ is trivial.  We have $B\cong W/(N^Q\rtimes \1)\cong Q$.   
Consider the subgroup $\1^Q\times Q$ of $W$.  Choose any $(c_1,q)\in \1^Q\times Q$ where $c_1: Q\rightarrow \1 \leq N$ is the constant function, with $q\neq 1$.
Since $W=(N^Q\times \1)B$, we may write $(c_1,q)$ as 
$$(c_1,q)=(w_2,1)(b_2,b_1)\mbox{ with }(w_2,1)\in N^Q\times \1 \mbox{ and }(b_2,b_1)\in B$$
Here $w_2, b_2 \in  N^Q N$, and $1, b_1 \in Q$.   Now $(c_1,q)=(w_2,1)(b_2,b_1)=(w_2b_2,b_1)$.  Hence $b_1=q$ and $c_1=w_2b_2$,
so $b_2=w_2^{-1}$ 
Now since $B\lhd W$, we have 
$(c_1,q)^{-1} (b_2,b_1) (c_1, q)= (c_1, q^{-1})(w_2^{-1},q)(c_1,q)= (w_2^{-1}, q^{-1}qq)=(w_2^{1}, 1) \in B$.
So,  $(w_2^{-1},1)\in (N^Q\times 1)\cap B=\1$. This shows $(b_2,b_1)$ is conjugate to the identity, hence $(b_2,b_1)=1$.
Hence $(c_1,q)=(w_2,1)(b_2,b_1)=(w_2,1)$.  This contradicts $q\neq 1$. Hence $\cx(W)\neq 1$, but rather $\cx(W)=2$.
\end{proof}

\noindent\underline{Remark}: One can easily check $\1^Q\rtimes N$ is not normal in $W=N\wr Q$, but this is implied by the above
argument, for otherwise $\1^Q\rtimes Q$ would be a normal complement to $(N^Q\rtimes\1)$ in $W$.

\section{Minimal Normal Subgroups, Socle Length and Hierarchical Complexity}\label{MinSocHier}
 To begin to develop the `semi-local' theory relating finite groups of complexity 1 (spans of gems) occurring as hierarchical layers within the structure of higher complexity groups that contain them, this section develops material related on minimal normal subgroups (e.g., \cite{Hall}, \cite{DixonMortimer}, \cite{Rotman})  and the socle of a finite group (e.g.,  \cite{DixonMortimer}, \cite{CannonHolt}).
While much of this is classical and well-known,  there are also some new formulations and results that we shall need subsequently for the complexity theory of finite groups. These include more detailed splitting and fragmentation lemmas of Sections~\ref{1stSplittingLemmas}~and~\ref{FragSplitII},  and lead to proofs of original results that socle length satisfies the normal and quotient axioms, but {\em not} the extension axiom, and is a sharp upper bound for hierarchical complexity.

 \subsection{Minimal Normal Subgroups}
 A normal subgroup $M\neq \1$ of a group $G$ is a {\em minimal normal subgroup of $G$} if  it contains no other non-trivial normal subgroup of $G$. That is,  $\1 \neq K \lhd G$  with  $K \leq M$ implies $K=M$.
 We make some observations on minimal normal subgroups.  By the Correspondence Theorem (e.g., \cite[Theorem 2.28]{Rotman}), the following is clear.

 \begin{lemma}\label{Min}
The image of a minimal normal subgroup under a surjective map is either a minimal normal subgroup of the image or is trivial.
\end{lemma}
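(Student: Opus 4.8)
The statement to prove is Lemma~\ref{Min}: the image of a minimal normal subgroup under a surjective map is either a minimal normal subgroup of the image or is trivial.

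\medskip

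The plan is to work directly with the Correspondence Theorem, exactly as the text hints. Let $\varphi\colon G\sur H$ be surjective and let $M$ be a minimal normal subgroup of $G$. First I would record the easy part: $\varphi(M)$ is normal in $H$, since the image of a normal subgroup under a surjection is normal (for any $h\in H$ pick $g\in G$ with $\varphi(g)=h$, then $h\varphi(M)h^{-1}=\varphi(gMg^{-1})=\varphi(M)$). So the only question is whether $\varphi(M)$ is \emph{minimal} among the nontrivial normal subgroups of $H$, assuming it is nontrivial.

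\medskip

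Next, suppose $\varphi(M)\neq\1$ and let $\1\neq L\lhd H$ with $L\leq\varphi(M)$. The goal is to show $L=\varphi(M)$. Consider the preimage $\varphi^{-1}(L)$, which is normal in $G$ and contains $K:=\ker\varphi$. Set $K_M:=K\cap M$, which is normal in $G$ (intersection of two normal subgroups), and observe $K_M\leq M$. By minimality of $M$, either $K_M=\1$ or $K_M=M$. If $K_M=M$ then $M\leq K$, so $\varphi(M)=\1$, contrary to assumption; hence $K_M=\1$, i.e.\ $\varphi$ restricted to $M$ is injective. Now consider $N:=\varphi^{-1}(L)\cap M$: it is normal in $G$ and contained in $M$, so by minimality either $N=\1$ or $N=M$. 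If $N=\1$, then since $\varphi|_M$ is injective and $\varphi(N)\supseteq$ ... — here I should be careful: $\varphi(\varphi^{-1}(L)\cap M)= L\cap\varphi(M)=L$ because $L\leq\varphi(M)$ and $\varphi$ is surjective onto $H$ (every element of $L$ is $\varphi$ of something in $M$: if $\ell\in L\subseteq\varphi(M)$ then $\ell=\varphi(m)$ for some $m\in M$, and $m\in\varphi^{-1}(L)\cap M$). So $\varphi(N)=L\neq\1$, forcing $N\neq\1$, hence $N=M$; applying $\varphi$ gives $L=\varphi(M)$, as desired.

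\medskip

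Actually the argument can be streamlined: it suffices to use the single observation that $N:=\varphi^{-1}(L)\cap M$ is a normal subgroup of $G$ inside $M$ with $\varphi(N)=L$ (using $L\leq\varphi(M)$), then minimality of $M$ gives $N=\1$ or $N=M$; since $L\neq\1$ we get $N=M$, hence $L=\varphi(N)=\varphi(M)$. One does not even need the intermediate injectivity claim. I expect no real obstacle here — the only point demanding a little care is verifying $\varphi(\varphi^{-1}(L)\cap M)=L$ rather than merely $\subseteq L$, which relies on $L$ sitting inside the image $\varphi(M)$; this is precisely the hypothesis $L\leq\varphi(M)$ and is exactly the kind of statement packaged by the Correspondence Theorem, justifying the brevity of the proof in the paper.
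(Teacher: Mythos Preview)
Your proof is correct and is exactly the argument that the paper leaves implicit: the paper's proof consists of a single sentence invoking the Correspondence Theorem, and what you have written is precisely the detailed unpacking of that citation (pull back $L$ to $\varphi^{-1}(L)\cap M\lhd G$, apply minimality of $M$, push forward). Your streamlined version at the end is the cleanest form and matches the intended reasoning.
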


 We shall make use of the following well-known elementary Fact. 
 
  \begin{fact}\label{elemLemma}
   If $A$ and $B$ are normal subgroups of a group $G$ and $A\cap B=\1$, then $ab=ba$ for all $a\in A$, $b\in B$. Moreover, the group $\langle A, B\rangle$ generated by $A$ and $B$ equals $AB \lhd G$, and $AB$ is the internal direct product isomorphic to $A \times B$ \end{fact}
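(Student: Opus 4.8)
The plan is to establish the three assertions in sequence: the commutation relation between $A$ and $B$, then that $\langle A, B\rangle = AB$ is a normal subgroup of $G$, and finally that $AB$ is internally the direct product of $A$ and $B$.

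First I would prove the commutation relation by a standard two-parenthesization argument applied to the commutator $[a,b] = a^{-1}b^{-1}ab$ for fixed $a\in A$, $b\in B$. Reading $[a,b] = a^{-1}(b^{-1}ab)$ and invoking normality of $A$ shows $[a,b]\in A$; reading $[a,b] = (a^{-1}b^{-1}a)b$ and invoking normality of $B$ shows $[a,b]\in B$. Hence $[a,b]\in A\cap B = \1$, so $[a,b] = 1$, which is exactly $ab = ba$ for all $a\in A$, $b\in B$.

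Next I would check that $AB$ is a subgroup of $G$ coinciding with $\langle A, B\rangle$. Since every element of $A$ commutes with every element of $B$, one computes $(a_1b_1)(a_2b_2) = (a_1a_2)(b_1b_2)\in AB$ and $(ab)^{-1} = a^{-1}b^{-1}\in AB$, so $AB$ is closed under products and inverses, hence a subgroup; it contains $A$ (take $b=1$) and $B$ (take $a=1$), and conversely any subgroup containing both $A$ and $B$ contains all products $ab$, so $AB = \langle A, B\rangle$. Normality of $AB$ in $G$ then follows from that of $A$ and $B$: for $g\in G$, $g^{-1}(ab)g = (g^{-1}ag)(g^{-1}bg)\in AB$.

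Finally, for the internal direct product claim I would consider the map $\varphi\colon A\times B\to AB$ given by $\varphi(a,b) = ab$. The commutation relation makes $\varphi$ a group homomorphism; it is surjective by the definition of $AB$; and it is injective because $\varphi(a,b) = 1$ forces $a = b^{-1}\in A\cap B = \1$, whence $a = b = 1$. Thus $\varphi$ is an isomorphism, so $AB$ is the internal direct product of $A$ and $B$, isomorphic to $A\times B$. I do not expect a genuine obstacle here — the statement is entirely routine — the only point meriting care is the dual reading of the commutator $[a,b]$ in the first step, which is precisely what makes the two memberships $[a,b]\in A$ and $[a,b]\in B$ visible at once.
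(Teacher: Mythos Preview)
Your proof is correct and follows essentially the same approach as the paper: the two-parenthesization of the commutator to get $[a,b]\in A\cap B=\1$, then normality of $AB$ via $g^{-1}(ab)g=(g^{-1}ag)(g^{-1}bg)$, and finally the isomorphism $(a,b)\mapsto ab$ with injectivity from $A\cap B=\1$. The only cosmetic difference is that the paper phrases injectivity via uniqueness of the representation $ab=a'b'$ rather than via the kernel, but this is the same argument.
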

   
  \begin{proof} $(b^{-1}a^{-1}b)a \in A$ since $b^{-1}a^{-1}b\in A$, and similarly $b^{-1}(a^{-1}ba)\in B$ since $a^{-1}ba \in B$. So $b^{-1}a^{-1}ba=1$, i.e., $a$ and $b$ commute. In any product in $G=\langle A, B \rangle$, we can move all $a$'s to the left and all $b$'s to the right, so $\langle A, B\rangle=AB$. To multiply in $AB$, we
  have $(ab)(a'b')=a(ba')b'=aa'bb'$ ($a,a'\in A, b, b'\in B$).   Also for $g\in G$, $ab\in AB$, we have  $g^{-1}abg=(g^{-1}ag)(g^{-1}bg)\in AB$, so $AB$ is normal in $G$.
  Each element of $AB$ can be uniquely written in
  the form $ab$ : for if $ab=a'b'$  then it follows $a^{-1}a'=bb'^{-1}\in A\cap B=\1$, whence $a=a'$ and $b=b'$.
  Thus the map  $(a,b)\mapsto ab$ is a surjective and injective morphism, showing  $A\times B\cong AB$. 
 \end{proof}
 
 \begin{fact}\label{ProdMinNorm}
\begin{enumerate}
\item
Let  $N_1, \ldots, N_n$ be minimal normal subgroups of $M$. Then the subgroup they generate is an internal direct product and normal in $M$.  Furthermore,  
$\langle N_1, \ldots, N_n \rangle \cong N_{i_1} \times \ldots \times N_{i_m}, $
for some $1 \leq m \leq n$ and $1 = i_1 < \cdots < i_m \leq n$.
    \item Let  $M$ be a minimal normal subgroup of $G$, and $N$ be a minimal normal subgroup of $M$. 
    Then $M$ is the internal direct product of $N$ and (zero or more) conjugates of $N$. 
\end{enumerate}
\begin{proof}
 (1)  Inductively define $M_1=N_1$, and for $0 \leq i<n$, 
 $$M_{i+1}:= \begin{cases}
  \langle M_i, N_{i+1} \rangle  &  \mbox{ if } \1 = M_i \cap N_{i+1} \\
      M_i     & \mbox{ otherwise.}
 \end{cases}$$
 We proceed by induction from 1 up to $n$, to show that $M_i = \langle N_j : 1 \leq j \leq i\rangle$, that  $M_i$ is the internal direct product of a subset of $\{N_1, \ldots , N_i\}$,  and that $M_i \lhd M$:   This is trivially for $i=1$, since $M_1=N_1 \lhd M$.  Now suppose this assertion is true for $i$ ($1 \leq i<n$).  
 On the one hand, 
if $\1 = M_i \cap N_i$,  by Fact~\ref{elemLemma}, $M_{i+1}=\langle M_i, N_{i+1} \rangle= M_i N_{i+1} \cong M_i \times N_{i+1}$, and $M_{i+1} \lhd M$. By induction hypothesis $M_i=\langle N_j : 1 \leq j \leq n \rangle$ is the internal direct product a subset of   $\{N_1, \ldots , N_i\}$, so it follows $M_{i+1}$ is the internal  direct product of a subset of 
$\{N_1, \ldots N_{i+1}\}$.    On the other hand if $\1 \neq M_i \cap N_{i+1}$, then  $ M_i \cap N_{i+1} \lhd M$ since $M_i \lhd M$ (by induction hypothesis) and $N_{i+1}\lhd M$. So minimal normality of $N_{i+1}$ in $M$ entails $M_i \cap N_{i+1} =N_{i+1}$, so $N_{i+1} \leq M_i$.  In either case, $N_{i+1} \leq M_{i+1}$, and so $M_{i+1}=\langle M_i, N_{i+1} \rangle = \langle N_1, \ldots, N_i, N_{i+1}\rangle.$ This proves the assertion for $i+1$.  By induction, the assertion holds $i=n$ and (1) is proved.
 
 (2) Let $g_1(=1), \ldots, g_n$ be elements of $G$, such that the conjugates $N_i=g_i^{-1} N g_i$ are pairwise distinct for $i\neq j$ ($1\leq i, j \leq n$), and each conjugate of $g^{-1}Ng$ in $G$ occurs as  $g_i^{-1}Ng_i$ for some $i$.
  Since  conjugation is an automorphism of $G$, 
we have $g^{-1} N g $ is minimal normal in $g^{-1} M g=M$ (since $M \lhd G$).
Applying (1) yields that
$$M': =\langle g^{-1} N g : g\in G \rangle = \langle g_i^{-1} N g_i : 1 \leq i \leq n \rangle = 
 N_{i_1} \ldots N_{i_n} \lhd M, \mbox{ and } $$
 $$M'\cong  N_{i_1} \times  \ldots \times N_{i_n},$$ where $i_1=1$.
Clearly, $M'$ is the normal closure of $N$ in $G$, hence by minimal normality of $M$ in $G$, $M'=M$.  
\end{proof}
 \end{fact}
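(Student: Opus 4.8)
The statement to prove is Fact~\ref{ProdMinNorm}, in two parts. The plan is to establish part (1) by an explicit induction that greedily absorbs the $N_i$ one at a time, keeping track of an invariant that the partial product $M_i$ is simultaneously (a) the subgroup generated by $N_1,\dots,N_i$, (b) an internal direct product of a subset of those $N_j$, and (c) normal in $M$. Then part (2) follows quickly by feeding the set of all $G$-conjugates of $N$ into part (1) and invoking minimal normality of $M$ in $G$.

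For part (1), first I would set up the recursive definition of $M_i$: start with $M_1 = N_1$, and at each step form $M_{i+1} = \langle M_i, N_{i+1}\rangle$ if $M_i \cap N_{i+1} = \1$, and $M_{i+1} = M_i$ otherwise. The base case $i=1$ is immediate since $M_1 = N_1 \lhd M$ is a (trivially internal direct) product of a one-element subset. For the inductive step, I split into two cases. In the case $M_i \cap N_{i+1} = \1$: since $M_i \lhd M$ (inductive hypothesis) and $N_{i+1} \lhd M$, Fact~\ref{elemLemma} gives that $M_{i+1} = M_i N_{i+1}$ is normal in $M$ and is an internal direct product $M_i \times N_{i+1}$; composing with the inductive hypothesis that $M_i$ is an internal direct product of a subset of $\{N_1,\dots,N_i\}$ yields that $M_{i+1}$ is an internal direct product of a subset of $\{N_1,\dots,N_{i+1}\}$. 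In the case $M_i \cap N_{i+1} \neq \1$: the intersection is normal in $M$ (intersection of two normal subgroups), so minimal normality of $N_{i+1}$ forces $M_i \cap N_{i+1} = N_{i+1}$, i.e.\ $N_{i+1} \le M_i$, hence $M_{i+1} = M_i$ already contains $N_{i+1}$ and the invariant persists unchanged. Either way $N_{i+1} \le M_{i+1}$, so $M_{i+1} = \langle N_1, \dots, N_{i+1}\rangle$, closing the induction. At $i = n$ this gives exactly the claimed isomorphism $\langle N_1,\dots,N_n\rangle \cong N_{i_1} \times \cdots \times N_{i_m}$ with $i_1 = 1$, and normality in $M$.

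For part (2), I would let $\{g_1^{-1} N g_1, \dots, g_n^{-1} N g_n\}$ (with $g_1 = 1$) enumerate the distinct $G$-conjugates of $N$. Since $M \lhd G$, conjugation by any $g \in G$ is an automorphism of $G$ fixing $M$ setwise, so each $g^{-1} N g$ is again a minimal normal subgroup of $M$. Applying part (1) to this finite family shows their join $M' := \langle g^{-1} N g : g \in G\rangle$ is an internal direct product of a subset of the conjugates (including $N$ itself, since $i_1 = 1$) and is normal in $M$. But $M'$ is visibly the normal closure of $N$ in $G$ — it is normalized by $M$ and, being generated by a conjugation-invariant set, also normalized by all of $G$ — hence $\1 \neq M' \lhd G$ with $M' \le M$; minimal normality of $M$ in $G$ forces $M' = M$, giving the conclusion.

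The main thing to be careful about — the only genuine subtlety — is the bookkeeping in the inductive step of part (1): one must verify at each stage that $M_i$ is normal \emph{in $M$} (not merely in $M_{i+1}$), because that normality is what makes $M_i \cap N_{i+1}$ normal in $M$ in the second case and what lets Fact~\ref{elemLemma} apply in the first case. There is no deep obstacle here; it is purely a matter of stating the three-part loop invariant precisely and checking it is preserved. I do not anticipate needing anything beyond Fact~\ref{elemLemma}, the Correspondence Theorem (for "intersection of normals is normal," which is elementary anyway), and the definition of minimal normal subgroup.
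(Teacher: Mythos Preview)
Your proposal is correct and follows essentially the same approach as the paper's proof: the same greedy inductive construction of the $M_i$ with the same three-part invariant and case split for part (1), and the same application of part (1) to the set of $G$-conjugates of $N$ followed by the normal-closure argument for part (2). Your identification of the key bookkeeping point (normality of $M_i$ in $M$) matches the paper's reasoning as well.
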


 \begin{fact}\label{MinNormSubgroup}
 A minimal normal subgroup $M$ of a finite group $G$ is simple or the (internal) direct product of isomorphic simple groups.
 \end{fact}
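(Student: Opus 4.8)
The plan is to prove Fact~\ref{MinNormSubgroup} by combining the internal structure of a minimal normal subgroup as iterated conjugates of one of its own minimal normal subgroups (Fact~\ref{ProdMinNorm}(2)) with a descent argument that eventually reaches a simple group. First I would handle the trivial observation that a finite minimal normal subgroup $M$ is nontrivial by definition, so it has a minimal normal subgroup $N$ of itself: either $M$ is simple (in which case there is nothing more to prove), or $M$ has a proper nontrivial normal subgroup, and among such subgroups one of minimal order is a minimal normal subgroup $N$ of $M$.

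Next I would apply Fact~\ref{ProdMinNorm}(2) with the given $G$: $M$ is the internal direct product of $N$ and finitely many conjugates $g_i^{-1} N g_i$ of $N$ in $G$, say $M \cong N_{i_1} \times \cdots \times N_{i_n}$ with $N_{i_1} = N$. Since conjugation by $g_i$ is an automorphism of $G$ carrying $M$ to itself (as $M \lhd G$) and carrying $N$ to $N_{i_k}$, each factor $N_{i_k}$ is isomorphic to $N$; in particular all the direct factors of $M$ in this decomposition are isomorphic to one another. So it remains to show $N$ itself is simple.

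The key step is the descent: I claim $N$ is simple. Suppose not; then $N$ has a proper nontrivial normal subgroup, hence a minimal normal subgroup $L$ of $N$ with $\1 \neq L \lneq N$. Now $M \cong N_{i_1} \times \cdots \times N_{i_n}$ with each $N_{i_k}\cong N$, and $L \times \1 \times \cdots \times \1$ (viewing $L \leq N = N_{i_1}$) is nontrivial; the point is to derive a contradiction with minimal normality of $M$ in $G$, which would force $N$ to have been simple. This is the place where I expect the main subtlety: one must see that $N$ having a proper nontrivial normal subgroup produces a proper nontrivial \emph{normal-in-$G$} subgroup of $M$. The cleanest route is to induct on $|M|$: if $M$ is not simple, pick a minimal normal subgroup $N \lneq M$ of $M$, apply Fact~\ref{ProdMinNorm}(2) to write $M$ as an internal direct product of $N$ and its $G$-conjugates, note $|N| < |M|$, and then observe that $N$, being minimal normal in the finite group $M$ (with $N$ playing the role of ``the whole group'' — but here we need $N$ minimal normal in itself, i.e. $N$ simple, not merely minimal normal in $M$).

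To make the induction airtight I would instead argue directly that $N$ is simple: the normal closure of $L$ (a minimal normal subgroup of $N$) inside $N$ equals $N$ by minimality, and one shows that $L$ together with its $N$-conjugates generate an internal direct product equal to $N$, by Fact~\ref{ProdMinNorm}(2) applied with the pair $(G,M)$ replaced by $(N,N)$ — valid since $N \lhd N$ trivially. Thus $N \cong L^{(1)} \times \cdots \times L^{(r)}$ with the $L^{(j)}$ pairwise isomorphic. Now if $L$ is not simple we repeat; since $|L| < |N| < |M|$, the process terminates, and peeling back the layers shows $N$, and hence every $N_{i_k}$, and hence $M = N_{i_1} \times \cdots \times N_{i_n}$, is a direct product of copies of a single simple group. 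If $M$ was simple to begin with, we are in the first alternative. Either way, $M$ is simple or the internal direct product of isomorphic simple groups, as claimed. The main obstacle is keeping the two nested ``minimal normal'' relations straight (minimal normal in $G$ versus minimal normal in $M$ versus in $N$) and ensuring Fact~\ref{ProdMinNorm}(2) is invoked with the correct ambient group at each stage; once that bookkeeping is done, the result follows by induction on order.
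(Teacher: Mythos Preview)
Your overall strategy---write $M$ as an internal direct product of $G$-conjugates of a minimal normal subgroup $N$ of $M$ via Fact~\ref{ProdMinNorm}(2), then show $N$ is simple---matches the paper's. But your argument for the simplicity of $N$ is where things go wrong. You write that you would apply Fact~\ref{ProdMinNorm}(2) ``with the pair $(G,M)$ replaced by $(N,N)$''; this is not a valid instantiation (the hypothesis requires a minimal normal subgroup of the ambient group, and $N$ is not minimal normal in $N$). You presumably mean $(M,N)$, which is fine. More seriously, you claim ``the normal closure of $L$ inside $N$ equals $N$ by minimality''---but $L$ is already normal in $N$, so its normal closure in $N$ is just $L$, not $N$. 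The descent/induction you sketch can be made to work (apply Fact~\ref{ProdMinNorm}(2) with ambient group $M$, minimal normal subgroup $N$, and $L$ minimal normal in $N$, then iterate), but the bookkeeping you flag as the ``main obstacle'' is indeed not handled correctly in what you wrote.

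The paper avoids all of this with a one-line observation you are missing: once $M$ is the internal direct product of $N$ and its conjugates, any $K \lhd N$ is automatically normal in all of $M$, because $N$ normalizes $K$ and every other direct factor commutes elementwise with $N$ (hence with $K$). Since $N$ was chosen \emph{minimal} normal in $M$, this forces $K = \1$ or $K = N$; thus $N$ is simple directly, with no induction or descent needed. This is the key idea that makes the proof short.
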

 \begin{proof}  Let $S$ be a minimal normal subgroup of $M$.   
 By Fact~\ref{ProdMinNorm}(2), $M$ is the internal direct product of conjugates of $S$ (including $S$ itself). 
 Suppose $\1 < K  \lhd S$, then $S$ normalizes $K$ and  each internal direct product factor different from $S$  commutes with $S$, hence with $K$.   Thus $K \lhd M$.   Mimimal normality of $S$  in $M$ now implies $K=S$. Therefore $S$ is simple.
 \end{proof}
 
 In particular, a minimal normal subgroup is a span of gems.

\subsection{Fragmentation and Splitting Lemma II}\label{FragSplitII}
 
The {\em socle} $\soc(G)$ of a finite group is the subgroup of $G$ generated by the minimal normal subgroups of $G$.  

\begin{theorem} For a finite group $G\neq \1$, $\cx(\soc(G))=1$.  \end{theorem}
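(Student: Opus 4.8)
The plan is to show that $\soc(G)$ is itself a span of gems, i.e., a direct product of simple groups, and then invoke the earlier corollary that a nontrivial group $H$ satisfies $\cx(H)=1$ if and only if $H\in\Span(\SIMPLE)$. Since $G\neq\1$ is finite, $G$ has at least one minimal normal subgroup, so $\soc(G)\neq\1$; hence it suffices to prove $\soc(G)$ is a direct product of simple groups.

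First I would recall that $\soc(G)$ is by definition the subgroup generated by all minimal normal subgroups of $G$, and since $G$ is finite there are only finitely many of them, say $N_1,\ldots,N_r$, so $\soc(G)=\langle N_1,\ldots,N_r\rangle$. Now apply Fact~\ref{ProdMinNorm}(1) with $M=G$: the subgroup generated by minimal normal subgroups $N_1,\ldots,N_r$ of $G$ is an internal direct product of a subcollection $N_{i_1}\times\cdots\times N_{i_m}$ and is normal in $G$. Thus $\soc(G)\cong N_{i_1}\times\cdots\times N_{i_m}$. Next, by Fact~\ref{MinNormSubgroup}, each minimal normal subgroup $N_{i_j}$ of the finite group $G$ is simple or an (internal) direct product of isomorphic simple groups; in either case $N_{i_j}$ is a direct product of simple groups. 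Therefore $\soc(G)$, being a direct product of direct products of simple groups, is itself a direct product of simple groups, i.e., $\soc(G)\in\Span(\SIMPLE)$.

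Finally, since $G\neq\1$ guarantees a minimal normal subgroup exists, $\soc(G)$ is a nontrivial span of gems, and by the corollary following Theorem~\ref{mucpx} (equivalently, directly from the constructability and product axioms as noted in Section~\ref{1stSplittingLemmas}) we conclude $\cx(\soc(G))=1$.

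I do not anticipate a serious obstacle here: the statement is essentially a bookkeeping consequence of the already-established facts about minimal normal subgroups (Facts~\ref{ProdMinNorm} and \ref{MinNormSubgroup}) together with the characterization of complexity-$1$ groups. The only point requiring a moment's care is confirming that the internal direct product extracted in Fact~\ref{ProdMinNorm}(1) is genuinely all of $\soc(G)$ — but this is immediate, since by construction every $N_i$ with $N_i\not\le M_{i-1}$ gets absorbed and every $N_i$ with $N_i\le M_{i-1}$ is already contained in the product, so $M_r=\langle N_1,\ldots,N_r\rangle=\soc(G)$.
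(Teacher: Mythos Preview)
Your proof is correct and follows essentially the same route as the paper: apply Fact~\ref{ProdMinNorm}(1) to write $\soc(G)$ as an internal direct product of some of its minimal normal subgroups, use Fact~\ref{MinNormSubgroup} to see each such factor is a direct product of simple groups, and conclude $\soc(G)$ is a nontrivial span of gems so $\cx(\soc(G))=1$. Your version is slightly more explicit about the nontriviality of $\soc(G)$ and the applicability of Fact~\ref{ProdMinNorm}(1), but the argument is the same.
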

\begin{proof}
It is the internal direct product of minimal normal subgroups  and normal in $G$ by Lemma~\ref{ProdMinNorm}(1). By Fact~\ref{MinNormSubgroup}, each
of these minimal normal subgroups is a direct product of simple groups, so $\soc(G)$ is too. It follows $\cx(G)=\mu(G)=1$.
\end{proof}

If $\soc(G)$ intersects the normal subgroup $N$ of $G$ non-trivially,  a minimal normal subgroup $M$ of $G$  completely ``fragments'' as a product of isomorphic minimal normal subgroups of $N$, and lies in $\soc(N)$:

\begin{fact}[Fragmentation Lemma]\label{fragment}

Let $M$ be a minimal normal subgroup of $G$ and $N \lhd G$. Suppose $M \cap N\neq \1$. Then 
\begin{enumerate} 
\item $M$ is internal direct product of isomorphic minimal normal subgroups of $N$.  \label{frag}
\item  $M=M\cap N$.
\item  (Absorption 1). $M\lhd N$.
\item (Absorption 2).  $M \lhd \mbox{\rm soc}(N).$  
\end{enumerate}
\end{fact}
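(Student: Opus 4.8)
The plan is to establish the four assertions roughly in the order (2), (3), (1), (4), since (2) and (3) are immediate and are needed before one can even speak of minimal normal subgroups of $N$ lying inside $M$. First I would observe that $M\cap N$ is normal in $G$, being the intersection of two normal subgroups of $G$; it is nontrivial by hypothesis and is contained in $M$, so the minimal normality of $M$ in $G$ forces $M\cap N=M$. This is (2). Since it says $M\leq N$ while $M\lhd G$, we get $M\lhd N$ at once (a normal subgroup of $G$ contained in $N$ is normal in $N$), which is (3).

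For (1), now that $M$ is a nontrivial normal subgroup of the finite group $N$, it contains a minimal normal subgroup $T$ of $N$: take any minimal element of the nonempty poset of nontrivial normal subgroups of $N$ contained in $M$. The key observation is that for every $g\in G$, conjugation by $g$ is an automorphism of $G$ carrying $N$ onto $N$ and $M$ onto $M$, so $g^{-1}Tg$ is again a minimal normal subgroup of $N$ contained in $M$. Hence the $G$-normal closure $M^{\ast}:=\langle\, g^{-1}Tg : g\in G\,\rangle$ is a nontrivial normal subgroup of $G$ contained in $M$, and minimal normality of $M$ in $G$ gives $M^{\ast}=M$. Since $N$ is finite there are only finitely many distinct conjugates $g^{-1}Tg$, and applying Fact~\ref{ProdMinNorm}(1) with ambient group $N$ to this finite list of minimal normal subgroups of $N$ shows that $M=M^{\ast}$ is the internal direct product of a subset of them. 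Each such direct factor is $G$-conjugate to $T$, hence isomorphic to $T$, and each is a minimal normal subgroup of $N$; this is precisely (1). Finally, (4) follows quickly: by (1), $M$ is generated by minimal normal subgroups of $N$, so $M\leq\soc(N)$, and combining this with $M\lhd N$ from (3) and $\soc(N)\leq N$ yields $M\lhd\soc(N)$.

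I expect the only real obstacle to be the isomorphism claim in (1): one must guarantee that the direct factors produced can be taken pairwise isomorphic, which is exactly why it is better to work with the single conjugacy orbit $\{\,g^{-1}Tg : g\in G\,\}$ of one minimal normal subgroup $T$ of $N$ rather than with the possibly inhomogeneous collection of \emph{all} minimal normal subgroups of $N$ contained in $M$. Once the orbit is in hand, Fact~\ref{ProdMinNorm}(1) (whose proof uses only minimal normality relative to the ambient group, here $N$) does all the combinatorial work of extracting an internal direct product decomposition.
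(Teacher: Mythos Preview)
Your proof is correct and follows essentially the same approach as the paper: establish (2) by minimality of $M$ against the normal subgroup $M\cap N$, deduce (3), then pick a minimal normal subgroup of $N$ inside $M$, take its $G$-conjugacy orbit, use minimal normality of $M$ to see this orbit generates $M$, and invoke Fact~\ref{ProdMinNorm}(1) to extract the internal direct product of isomorphic copies; (4) then follows. The paper's argument is the same up to notation (it calls your $T$ by $K$ and your $M^{\ast}$ by $M'$), and your remark about why one should work with a single conjugacy orbit rather than all minimal normal subgroups of $N$ inside $M$ is exactly the point.
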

\begin{proof} (2) Since $N\lhd G$, $\1\neq M\cap N$ is normal in $G$. Since $M$ is minimal normal in $G$, $M\cap N=M$.
(3) follows from (2), since $M\cap N \lhd N$.
(1) We have $\1\neq M\cap N \lhd N$. Let $K\neq \1$ be a minimal normal a subgroup of $N$ contained in $M\cap N$. 
For any $g\in G$,  $g^{-1}Kg$ lies in $N$ since $N \lhd G$. Since conjugation is an automorphism, $g^{-1}Kg$ is a minimal normal subgroup of $N=g^{-1}Ng$.  
By  minimal normality  in $N$, either $K\cap g^{-1}Kg=1$ or $K=g^{-1}Kg$. Choose
representatives
$g_1,\ldots g_k \in G$ so that $g_1^{-1}Kg_1, \ldots, g_k^{-1}Kg_k$ comprise the collection of all the distinct conjugates of $K$. 
Since $K\leq M\cap N= M$ (by (2)) and each $g^{-1}Kg \leq g^{-1}Mg=M$, therefore, $M'=\langle g^{-1} K g : g \in G\rangle \leq M$, but this is the normal closure of $K$ in $G$ so $M'\lhd M$. By  minimal normality  of $M$ in $G$, $M'=M$.
By Fact~\ref{ProdMinNorm}(1),  the group $M'$ is normal in $N$ and is isomorphic to an internal direct product copies of $K$. 
Therefore $M$ is the internal direct product of $K$ and conjugates of $K$ in $G$.
(4) By (1), $M$ is the product of minimal normal subgroups of $N$, hence $M \lhd \soc(N)$.
\end{proof}

\begin{lemma}[Span of Gems - Splitting Lemma II]  \label{SpanOfGems2}
Let $N$ be a normal subgroup of a finite group $G$, then
\begin{enumerate}
\item $\soc(G) \cap N =M_1 \ldots  M_k = N_1 \ldots N_m,$
is the internal direct product of some minimal normal subgroups $M_i$ of $G$ ($m\geq 0$, $1 \leq i \leq m$),  
and also the internal direct product of some minimal normal subgroups $N_j$ of $N$ ($m \geq 0$, $1 \leq j \leq m \leq n$). 
\item In particular, $\soc(G) \cap N \leq \soc(N)$, and $\soc(G)\cap N \lhd \soc(N)$. 
\item The complement of $N_1\ldots N_m$ in $\soc(G)$ is an internal direct product of minimal normal subgroups $M'$ of $G$ that intersect $N$ trivially. That is, for some  $M_1', ..., M_s'$ minimal normal in $G$ 
  $$\soc(G) = (\soc(G) \cap N) \times M_1' \times  \ldots \times  M'_s,$$ is an internal direct product,
  with each $M_i'\cap \soc(N)=\1$, $s\geq 0$ ($1\leq i \leq s$).
\item The complement of $N_1 \ldots N_M$ in $\soc(N)$  is the internal direct product of minimal normal subgroups of $N'$ that intersect $\soc(G)$ trivially. That is, for some $N_1', ..., N_t'$ minimal normal in $N$ 
$$\soc(N) = (\soc(G) \cap N) \times N_1' \times \ldots \times  N'_t,$$ 
is an internal direct product, with each $N_i'\cap \soc(G)=\1$,  $t\geq 0$ ($1\leq i \leq t)$. 
\end{enumerate}
\end{lemma}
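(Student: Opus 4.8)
The plan is to prove Splitting Lemma~II by combining the Fragmentation Lemma (Fact~\ref{fragment}) with the first Splitting Lemma for spans of gems (Lemmas~\ref{SpanOfGems} and \ref{detailedSpanOfGems}), applied to the span of gems $\soc(G)$. First I would establish~(1). Write $\soc(G)=M_1\times\cdots\times M_k$ as an internal direct product of minimal normal subgroups of $G$, via Fact~\ref{ProdMinNorm}(1). For each $i$, $M_i\cap N$ is normal in $G$ (as $N\lhd G$), so by minimal normality either $M_i\cap N=\1$ or $M_i\leq N$; reorder so that $M_1,\ldots,M_m\leq N$ and $M_{m+1},\ldots,M_k$ meet $N$ trivially. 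Then $\soc(G)\cap N=M_1\times\cdots\times M_m$: the inclusion $\supseteq$ is clear, and for $\subseteq$ note that any element of $\soc(G)\cap N$, written in coordinates $(x_1,\ldots,x_k)$ with $x_i\in M_i$, must have each $x_i\in M_i\cap N$, forcing $x_{m+1}=\cdots=x_k=1$. By the Fragmentation Lemma~\ref{fragment}(1), each $M_i$ ($1\leq i\leq m$), since $M_i\cap N=M_i\neq\1$, is an internal direct product of isomorphic minimal normal subgroups of $N$; concatenating these across $i=1,\ldots,m$ expresses $\soc(G)\cap N$ as $N_1\times\cdots\times N_{m'}$ for minimal normal subgroups $N_j$ of $N$. (I would be slightly careful about the index count: the statement writes $m$ for both, which is harmless since we can just rename, but the honest statement is ``some'' minimal normal subgroups of $N$.)

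Next, part~(2) is essentially a restatement: each $N_j$ is minimal normal in $N$, hence $N_j\leq\soc(N)$, so $\soc(G)\cap N=N_1\cdots N_{m'}\leq\soc(N)$; and $\soc(G)\cap N$ is normal in $G$, a fortiori normal in $N\geq\soc(N)$, so $\soc(G)\cap N\lhd\soc(N)$. (One could also invoke Fact~\ref{fragment}(4) directly: each $M_i$ with $M_i\cap N\neq\1$ satisfies $M_i\lhd\soc(N)$.)

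For~(3), apply Splitting Lemma~I (Lemma~\ref{SpanOfGems}) to the span of gems $G_0:=\soc(G)$ and its normal subgroup $A:=\soc(G)\cap N$ (which is indeed normal in $\soc(G)$, being normal in $G$): there is a complement $B\lhd\soc(G)$ with $\soc(G)=A\times B$. The detailed variant, Lemma~\ref{detailedSpanOfGems}(\ref{normalsubgroup}), says $B$ is itself a direct product of simple groups; I then need to see that the simple factors of $B$ can be grouped into minimal normal subgroups $M_1',\ldots,M_s'$ of $G$ each meeting $N$ trivially. This follows because $B$ is a direct product of some of the $M_{m+1},\ldots,M_k$ from part~(1) — more precisely, $B\cong\soc(G)/A\cong M_{m+1}\times\cdots\times M_k$, and one checks that $\soc(G)=(M_1\times\cdots\times M_m)\times(M_{m+1}\times\cdots\times M_k)$ already exhibits $M_{m+1}\times\cdots\times M_k$ as one such complement (uniqueness of the complement is not claimed), with each $M_i'$ minimal normal in $G$ and $M_i'\cap N=\1$ by construction; and since $M_i'\cap N=\1$ forces $M_i'\cap\soc(N)\leq N$ intersected appropriately — actually $M_i'\cap\soc(N)\lhd G$ is contained in $M_i'$ so is $\1$ or $M_i'$, and if it were $M_i'$ then $M_i'\leq\soc(N)\leq N$, contradicting $M_i'\cap N=\1$ — we get $M_i'\cap\soc(N)=\1$ as required.

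Finally, part~(4) is the mirror image inside $N$: apply Splitting Lemma~I to the span of gems $\soc(N)$ and its normal subgroup $\soc(G)\cap N$ (normal in $\soc(N)$ by part~(2)) to get a complement, and group its simple factors into minimal normal subgroups $N_1',\ldots,N_t'$ of $N$ via the detailed variant; that each $N_i'$ can be taken minimal normal in $N$ follows by decomposing $\soc(N)$ as an internal direct product of minimal normal subgroups of $N$ (Fact~\ref{ProdMinNorm}(1)), observing each such factor either lies in $\soc(G)\cap N$ or meets it trivially (since $\soc(G)\cap N\lhd G$ and any minimal normal subgroup $N'$ of $N$ has $N'\cap(\soc(G)\cap N)\lhd N$, hence $=\1$ or $=N'$), and collecting the latter factors; and $N_i'\cap\soc(G)\leq\soc(G)\cap N$ is a normal subgroup of $N$ contained in $N_i'$ and disjoint from $\soc(G)\cap N$, hence trivial. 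The main obstacle I anticipate is purely bookkeeping: keeping the two direct-product decompositions of $\soc(G)\cap N$ — one indexed by minimal normal subgroups of $G$, one by minimal normal subgroups of $N$ — compatible, and being careful that ``complement'' means ``a complement'' (non-unique, cf.\ the Remarks after Lemma~\ref{SpanOfGems}), so that no uniqueness is silently assumed. There is no deep difficulty; everything reduces to the already-proven splitting and fragmentation lemmas plus the elementary fact that a normal subgroup of $G$ contained in a minimal normal subgroup is trivial or everything.
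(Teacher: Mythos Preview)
There is a genuine gap in your proof of part~(1). You fix an arbitrary decomposition $\soc(G)=M_1\times\cdots\times M_k$ into minimal normal subgroups of $G$, sort them according to whether $M_i\leq N$ or $M_i\cap N=\1$, and then assert that $\soc(G)\cap N=M_1\times\cdots\times M_m$. Your justification for the inclusion~$\subseteq$ is that ``any element of $\soc(G)\cap N$, written in coordinates $(x_1,\ldots,x_k)$ with $x_i\in M_i$, must have each $x_i\in M_i\cap N$.'' This is false: membership in~$N$ does not pass to the individual coordinates. Take $G=\Z_2\times\Z_2$ and $N=\{(0,0),(1,1)\}$ the diagonal; with $M_1=\Z_2\times\1$ and $M_2=\1\times\Z_2$ we have $M_1\cap N=M_2\cap N=\1$, so your argument gives $m=0$ and $\soc(G)\cap N=\1$, yet $\soc(G)\cap N=N\neq\1$. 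The element $(1,1)\in N$ has coordinates $(1,0)$ and $(0,1)$, neither in~$N$. The same defect propagates to your arguments for~(3) and~(4), which rest on this coordinate-splitting claim.

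The paper avoids this by working in the opposite direction: it starts from $P=\soc(G)\cap N$, observes $P\lhd\soc(G)$ so $P$ is itself a span of gems (Lemma~\ref{detailedSpanOfGems}), and then for each simple $S\lhd P$ forms the normal closure $M_S=\langle g^{-1}Sg:g\in G\rangle$, which is a minimal normal subgroup of~$G$ lying inside~$P$ (since $P\lhd G$). These $M_S$ generate~$P$, and Fact~\ref{ProdMinNorm}(1) then expresses $P$ as an internal direct product of some of them. In the counterexample above this produces $M_S=N$ itself, the ``right'' minimal normal subgroup that your fixed decomposition missed. Your approach can be repaired by choosing the decomposition of $\soc(G)$ adapted to~$N$ from the outset (e.g.\ first decompose $P$ as the paper does, then extend to all of $\soc(G)$ via Fact~\ref{ProdMinNorm}(1)), but as written the argument does not go through.
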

\begin{proof}
(1) Let $ P=\soc(G) \cap N$.  Then $P \lhd G$, since $\soc(N)$ and $P$ are normal in $G$, and so $P \lhd \soc(G)$. 
By Lemma~\ref{detailedSpanOfGems}, $P$ is a direct product of (zero or more) simple groups since $P\lhd \soc(G)$. 
Hence $P=\langle S : S \mbox{ simple, }S\lhd P \rangle$.
Note for simple $S\lhd P$, we have $P=S \times Q$  and $\soc(G)=P \times B$ by Lemma~\ref{detailedSpanOfGems}, so $\soc(G)=S\times Q\times B$ and $S\lhd \soc(G)$.
Let $M_S=\langle g^{-1} S g : g \in G\rangle$ which is a a minimal normal subgroup of $G$. 
Every conjugate $g^{-1}Sg$ of $S$ is simple and normal in $g^{-1}Pg=P$.   So $M_S \leq P$.  Thus 
$$P = \langle S :  S \mbox{ simple, }S\lhd P \rangle \leq \langle M_S :  S \mbox{ simple, }S\lhd P \rangle \leq P.$$
So by Fact~\ref{ProdMinNorm}(1) applied to the set of these minimal normal subgroups $M_S$ of $G$, 
we have that $P= \langle M_S :  S \mbox{ simple, }S\lhd P \rangle=M_{S_1}\ldots M_{S_n}$ is the internal direct product for some simple $S_1, \ldots S_n \lhd P$.
 Then, since,  $\1\neq S \leq M_S \cap N$, 
the Fragmentation Lemma (Lemma~\ref{fragment}\ref{frag}) shows that each $M_S$ is the internal direct product of some minimal normal subgroups of $N$.
We have that $P$ is the internal direct product of some minimal normal subgroups of $N$.
(2)~Since $\soc(N)$ is the generated by all the minimal normal subgroups of $N$, this follows from (1). Also $\soc(G) \cap N=P$ is normal in $G$, hence in $N$.
(3) follows from Lemma~\ref{ProdMinNorm}(1) applied  the set consisting of
$M_1, \ldots, M_n$ and all the minimal normal subgroups of $G$ that intersect $N$ trivially, since these together generate $\soc(G)$. 
Similarly, (4) follows from  Lemma~\ref{ProdMinNorm}(1) applied to the set consisting of $N_1, \ldots N_m$ and all $N'$ minimal normal in $N$ that intersect $\soc(G)$ trivially, since these together generate  $\soc(N)$. 
\end{proof}

\subsection{A Characteristic Series and Complexity Upper Bound}
\label{socleSeries}

\begin{definition}[Socle Characteristic Series and $\sx$] \label{length} Let $G$ be a finite group.  Let $V_0=\1$,   
define  $V_1=\soc(G)$,  and inductively for $i \geq 1$, define the natural surjective homomorphism  $h_i: G\sur G/V_i$,  $\soc_{i+1}(G)=\soc(G/V_i)$, $V_{i+1}=h_i^{-1}(\soc_i(G))$.  Since $\soc(G/V_i)$ is normal in $G/V_i$, its inverse image $V_{i+1}$ is normal in $G$.
Observe that if $V_i\neq G$, then  $V_{i+1}$ properly contains $V_i$.  
Therefore, $V_{\ell}=G$ for some $\ell \geq 0$. Let $\sx(G)$, {\it the socle length} (or {\it socle `complexity'}) of $G$, be the least integer $\ell\geq 0$ such that $V_{\ell}=G$. 
\end{definition}

\begin{fact} Let $G\neq \1$ be a finite group. \label{socle-fact}
\begin{enumerate} 
\item \label{socisproduct} The socle  $\soc(G)=\langle N_1 \ldots N_v \rangle$ is generated by the distinct minimal normal subgroups $N_i$ of $G$, is the internal direct product of a subset  $\{N_1, \ldots, N_v\}$, and is normal in $G$. 
\\ (Note that by minimality $N_i \cap N_j = \1$ (the trivial subgroup) of $G$ for  $i\neq j$.)
\item $\soc(G)$ is isomorphic to $S_1^{k_1} \times \cdots \times S_m^{k_m}$   where $m\geq 1$,  $S_i$ is simple,
$1 \leq k_i$ for $1 \leq i \leq m$. 
\item $\soc(G)$ is a characteristic normal subgroup of $G$. \label{socChar}
\item  $\1 =V_0 \lhd V_1 \lhd V_2 \lhd \cdots \lhd  V_{\ell} =G$ is a characteristic series with each $V_i$ is characteristic  in $G$.  \label{socCharSeries}
\item $\soc(\soc(G))=\soc(G)$ \label{SocIdempotent}
\item $\soc(G/V_i)=V_{i+1}/V_i=\soc(V_{i+1}/V_i)$.  \label{SocQuotients}
\item  \label{SmoothSocSeries}$\sx(G/V_i)=\ell-i$
for $0\leq i \leq \ell$.
\item If $G$ is a simple or a direct product of simple groups,  $\sx(G)=1$. \label{spxsimple}
\end{enumerate}
\end{fact}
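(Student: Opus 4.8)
The plan is to take the eight items more or less in order, leaning on the structural results already proved about minimal normal subgroups and on the Correspondence Theorem; the only genuine work is the idempotence of the socle, item~(5), together with the index bookkeeping for the quotient series, item~(7). For~(1) I would apply Fact~\ref{ProdMinNorm}(1) to the finite list $N_1,\dots,N_n$ of \emph{all} minimal normal subgroups of $G$: the subgroup they generate is $\soc(G)$ by definition, and that Fact says it is the internal direct product of a subset of them and normal in $G$; the parenthetical $N_i\cap N_j=\1$ for $i\neq j$ is immediate, since $N_i\cap N_j\lhd G$ lies inside two distinct minimal normal subgroups and so is trivial. Item~(2) is then one line from Fact~\ref{MinNormSubgroup}: each chosen $N_i$ is a direct product of isomorphic simple groups, so $\soc(G)$ is a direct product of simple groups, and collecting isomorphism types gives $S_1^{k_1}\times\cdots\times S_m^{k_m}$ with $m\geq1$, a nontrivial finite group having at least one minimal normal subgroup.

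For~(3), every automorphism $\varphi$ of $G$ carries a minimal normal subgroup to a minimal normal subgroup (normality and inclusion are preserved, essentially Lemma~\ref{Min} applied to the surjection $\varphi$), so $\varphi$ permutes the finite set of minimal normal subgroups and fixes the subgroup they generate; hence $\soc(G)$ is characteristic, in particular normal. Item~(4) follows by induction on $i$: $V_0=\1$ is characteristic, and if $V_i$ is characteristic then each $\varphi\in\operatorname{Aut}(G)$ induces an automorphism of $G/V_i$ that fixes $\soc(G/V_i)$ by~(3), so $\varphi$ fixes $V_{i+1}=h_i^{-1}(\soc(G/V_i))$; the strict containment $V_i\subsetneq V_{i+1}$ when $V_i\neq G$ is the observation already recorded in Definition~\ref{length}, namely that the nontrivial group $G/V_i$ has nontrivial socle.

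The step deserving care is~(5), $\soc(\soc(G))=\soc(G)$. Using~(2) I would write $\soc(G)$ as an internal direct product of simple groups $T_1\times\cdots\times T_r$ (each $T_j$ arising as a simple direct factor of some $N_i$, and normal in $\soc(G)$ since the remaining factors centralize it while $N_i$ normalizes it). A simple normal subgroup is automatically minimal normal, so $\soc(\soc(G))\supseteq\langle T_1,\dots,T_r\rangle=\soc(G)$, and the reverse inclusion is trivial. Granting~(5), item~(6) is nearly definitional: $h_i$ is surjective with kernel $V_i\leq V_{i+1}$, so $V_{i+1}/V_i=h_i(V_{i+1})=\soc(G/V_i)$, and then $\soc(V_{i+1}/V_i)=\soc(\soc(G/V_i))=\soc(G/V_i)$ by~(5).

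For~(7) I would check by induction that $\1=V_i/V_i\lhd V_{i+1}/V_i\lhd\cdots\lhd V_\ell/V_i=G/V_i$ is literally the socle characteristic series of $G/V_i$: by the Correspondence Theorem $(G/V_i)/(V_{i+j}/V_i)\cong G/V_{i+j}$, whose socle $V_{i+j+1}/V_{i+j}$ pulls back to the next term $V_{i+j+1}/V_i$; since this series strictly increases until it reaches $G/V_i$, its length $\ell-i$ equals $\sx(G/V_i)$. Finally~(8): if $G$ is simple it is its own unique minimal normal subgroup, and if $G$ is a direct product of simple groups each simple direct factor is minimal normal and together they generate $G$; either way $\soc(G)=G=V_1$, so $\sx(G)=1$, while $\sx(G)\geq1$ since $G\neq\1$. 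Beyond the two flagged points I expect only routine appeals to Fact~\ref{ProdMinNorm}, Fact~\ref{MinNormSubgroup}, Lemma~\ref{Min}, Definition~\ref{length}, and the Correspondence Theorem.
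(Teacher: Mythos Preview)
Your proposal is correct and follows essentially the same route as the paper: Fact~\ref{ProdMinNorm}(1) for item~(1), the structure of minimal normal subgroups for~(2), permutation of minimal normal subgroups under automorphisms for~(3), induction for~(4), the observation that simple direct factors are minimal normal for~(5), and then~(6)--(8) unwound from these and the definition. Your phrasing of~(4) via the induced automorphism on $G/V_i$ is cleaner than the paper's element-by-element computation, and you spell out the Correspondence-Theorem bookkeeping in~(7) where the paper simply asserts it follows from the definition; for~(2) the paper cites Lemma~\ref{SpanOfGems} rather than Fact~\ref{MinNormSubgroup}, but either reference suffices.
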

\begin{proof}
(1) holds by definition of socle and  Lemma~\ref{ProdMinNorm}(1). (2) follows from (1) and Lemma~\ref{SpanOfGems}.
(\ref{socChar}):  An automorphism $\alpha$ of $G$ send a minimal normal subgroup $M$ of $G$ to a minimal normal subgroup $\alpha(M)$ of $G$. Hence, $\alpha$ permutes the minimal normal subgroups of $G$. Thus, the group the minimal normal subgroups generate, namely $\soc(G)$, is mapped onto itself under automorphisms.  (\ref{socCharSeries}):  
  We show by induction on $i$ that $V_i$ is characteristic in $G$, i.e., $\alpha(V_i)=V_i$.  (In particular $V_i$ is normal since conjugation is an automorphism.)   
For $i=0$, $\alpha(V_0)=V_0$ is trivial.  Suppose the assertion is true for $i$.   Let $w\in V_{i+1}$, we have $h_i(w)\in \soc(G/V_i)$, i.e. $wV_i \in \soc(G/V_i)$. By (\ref{socisproduct}) we may write 
$$ w V_i = w_1 V_i \ldots w_k V_i, $$
for some $w_i\in G$, where the  each $w_j V_i$ lies in a minimal normal subgroup of
$G/V_i$.  Note: $w$ and $w_j$ lie $V_{i+1}$, since $w V_i, w_j V_i \in \soc(G/V_i)$.
Now $\alpha(w_j V_i)$ lies in a minimal normal subgroup of $\alpha(G)/\alpha(V_i)= G/V_i$ by the
induction hypothesis that $\alpha(V_i)=V_i$. Therefore
$\alpha(wV_i)=\alpha(w)V_i$ is a product of elements of minimal normal subgroups of $G/V_i$. 
This shows $\alpha(w)V_i \in \soc(G/V_i)$, so $\alpha(w) \in V_{i+1}$. 
Since $w$ was an arbitrary element of $V_{i+1}$, $\alpha(V_{i+1})\leq V_{i+1}$. Now $\alpha$ is injective and
 $V_{i+1}$ finite, so $\alpha(V_{i+1})=V_{i+1}$, i.e., as $\alpha$ was arbitrary, $V_{i+1}$ is characteristic in  $G$.   So the result follows by induction.

(\ref{SocIdempotent}):
The $\soc(G)$ is the direct product of simple groups $S_1\times \cdots \times S_k$.  Hence each factor is a minimal normal subgroup of $\soc(G)$, hence contained in $\soc(\soc(G))$. This implies the group these $S_i$ generate, namely, $\soc(G)$ is a subgroup of $\soc(\soc(G))$; but the latter is a subgroup of $\soc(G)$.

(\ref{SocQuotients}): By definition $V_{i+1}$ is the inverse image in $G$ of $\soc(G/V_{i})$ under the quotient map,  and so $\soc(G/V_{i})=V_{i+1}/V_i$. So $\soc(\soc(G/V_{i}))=\soc(V_{i+1}/V_i)$, whence by 
(\ref{SocIdempotent}), $\soc(G/V_i)) = \soc(V_{i+1}/V_i)$ too.

(\ref{SmoothSocSeries}) follows directly from the definition of socle length. \\
(\ref{spxsimple}) follows since a direct product of simple groups is its own socle by (\ref{SocIdempotent}).
\end{proof}

A finite simple group has complexity at most 1.  Socle length $\sx$ assigns it complexity 1.

\begin{theorem} Socle length $\sx$ dominates any hierarchical complexity function on finite groups.  \label{dom}
\end{theorem}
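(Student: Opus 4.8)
The plan is to reduce the statement to the single inequality $\cx(G)\le\sx(G)$ and then to exhibit the socle characteristic series itself as a witnessing subnormal series for $\mu(G)\le\sx(G)$.

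First I would invoke Theorem~\ref{uniqueCpx}: any function $\c$ satisfying the complexity axioms is pointwise dominated by the maximal complexity function $\cx$, so it suffices to prove $\cx(G)\le\sx(G)$ for every finite group $G$. Since $\cx=\mu$ by Theorem~\ref{mucpx}, the target becomes $\mu(G)\le\sx(G)$.

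Next I would take the socle characteristic series of Definition~\ref{length}, namely $\1=V_0\lhd V_1\lhd\cdots\lhd V_{\ell}=G$ with $\ell=\sx(G)$; this is in particular a subnormal series of $G$. By Fact~\ref{socle-fact}(\ref{SocQuotients}) its successive quotients are $V_{i+1}/V_i=\soc(G/V_i)$, and for $i<\ell$ (so that $G/V_i\ne\1$) the socle of a nontrivial finite group is the internal direct product of its minimal normal subgroups (Fact~\ref{ProdMinNorm}(1)), each of which is simple or a direct product of isomorphic simple groups (Fact~\ref{MinNormSubgroup}); one can also read this off directly from Fact~\ref{socle-fact}(2). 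In every case each factor $V_{i+1}/V_i$ lies in $\Span(\SIMPLE)$, i.e.\ is a span of gems.

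Therefore the socle characteristic series is a subnormal series for $G$ of length $\sx(G)$ all of whose factors are spans of gems, so by the definition of $\mu$ as the least length of such a series, $\mu(G)\le\sx(G)$. Chaining the inequalities gives $\c(G)\le\cx(G)=\mu(G)\le\sx(G)$ for every complexity function $\c$, which is the assertion. I do not expect a genuine obstacle here: the whole argument is an assembly of facts already in hand, and the only point requiring a moment's care is that at each stage the relevant normal subgroup of $G/V_i$ is precisely its socle — which Fact~\ref{MinNormSubgroup} together with Fact~\ref{ProdMinNorm}(1) guarantees is an honest direct product of simple groups, not merely a group built from simple groups by iterated extension.
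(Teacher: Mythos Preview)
Your proof is correct but takes a different route from the paper's. The paper argues directly with an arbitrary complexity function $\c$ by strong induction on $|G|$: it applies the extension axiom to the pair $(\soc(G),\,G/\soc(G))$, bounds $\c(\soc(G))\le 1$ via the product and constructability axioms, and then invokes the inductive hypothesis on $G/\soc(G)$ together with $\sx(G)=1+\sx(G/\soc(G))$. In particular, the paper does not appeal to Theorem~\ref{uniqueCpx} or to the identification $\cx=\mu$ from Theorem~\ref{mucpx}; Corollary~\ref{upperbound} ($\cx\le\sx$) is obtained afterwards as a consequence.

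You instead factor through the maximal function: reduce to $\cx\le\sx$, then to $\mu\le\sx$, and observe that the socle characteristic series itself is a subnormal series of length $\sx(G)$ with span-of-gems quotients, witnessing the $\mu$-bound. This is cleaner once Theorems~\ref{uniqueCpx} and~\ref{mucpx} are in hand and has the virtue of making explicit that the socle series is a decomposition of the required type; the paper's argument, on the other hand, is more self-contained, using only the axioms and the structure of the socle, and so would survive even in a presentation where $\mu$ had not yet been introduced.
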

\begin{proof}
Let $\c$ be any complexity function on finite groups.
We prove for all finite groups $ \c(G) \leq \sx (G)$ by strong induction on $|G|$.  
If $G=1$, $\sx(G)=\c(G)=0$. 
We have $\soc(G)$ embeds in $G$ with quotient $G/\soc(G)$, therefore by the extension axiom
$\c(G) \leq \c(\soc(G)) + \c(G/\soc(G))$.
Now $\soc(G)$ is a direct product of simple groups $S_1, \ldots \times S_k$, so by the product axiom  $\c(\soc(G)) = \max_{i=1}^k \c(S_i) \leq \1=\sx(\soc(G))$ (by Fact~\ref{socle-fact}(\ref{spxsimple})) since simple groups $S_i$ have $\c(S_i)\leq 1$ (by Lemma~\ref{simplecpx}).
By induction hypothesis, $\c(G/\soc(G)) \leq\sx(G/\soc(G))$.
Now the socle length of $G$ is one more than the socle length of $G/\soc(G)$ by definition of socle length (Fact~\ref{socle-fact}(\ref{SmoothSocSeries})), hence $$\c(G) \leq \sx(\soc(G))+ \sx(G/\soc(G))=1+\sx(G/\soc(G))=\sx(G).$$   \end{proof}

In particular 
\begin{corollary}\label{upperbound}
$\cx(G) \leq \sx(G)$ for all finite groups $G$.
\end{corollary}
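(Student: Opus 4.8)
The plan is simply to instantiate Theorem~\ref{dom} at the particular complexity function $\c = \cx$. By Theorem~\ref{uniqueCpx} — and the identification $\mu = \cx$ established in Theorem~\ref{mucpx} — the function $\cx$ is itself a hierarchical complexity function, i.e.\ it satisfies all of the complexity axioms. Theorem~\ref{dom} asserts precisely that socle length $\sx$ dominates \emph{every} such function: $\c(G) \le \sx(G)$ for all finite $G$ whenever $\c$ satisfies the axioms. Taking $\c = \cx$ yields $\cx(G) \le \sx(G)$ for all finite groups $G$, which is exactly the assertion of the corollary.

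There is no genuine obstacle here; all the work is already done in Theorem~\ref{dom}, whose argument is a strong induction on $|G|$: apply the extension axiom to the inclusion $\soc(G) \hookrightarrow G$ with quotient $G/\soc(G)$, bound $\c(\soc(G)) \le 1$ via the product axiom (since $\soc(G)$ is a span of gems and simple groups have complexity at most $1$ by Lemma~\ref{simplecpx}), and use $\sx(G) = 1 + \sx(G/\soc(G))$ from Fact~\ref{socle-fact}(\ref{SmoothSocSeries}) together with the inductive bound $\c(G/\soc(G)) \le \sx(G/\soc(G))$. One could equally well bypass the general statement and run the same induction directly with $\cx = \mu$ in place of $\c$, but invoking Theorem~\ref{dom} is the cleanest route. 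It is worth remarking that this upper bound is sharp in the sense announced in the introduction — socle length fails the extension axiom, so it need not coincide with $\cx$, yet it is attained for suitable groups — but establishing sharpness is a separate task not required for the corollary itself.
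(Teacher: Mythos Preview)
Your proposal is correct and matches the paper's approach exactly: the corollary is stated immediately after Theorem~\ref{dom} with the phrase ``In particular,'' so the intended proof is precisely to specialize that theorem to the complexity function $\c=\cx$, which you do. Your additional recap of the inductive argument behind Theorem~\ref{dom} and the remark on sharpness are accurate but not needed for the corollary itself.
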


\begin{theorem}[Complexity Upper Bounds]\label{upperbounds}

For every finite group $G$,
$$ \mu(G) =\cx(G)\leq \sx(G) \leq \chief(G) \leq \JH(G),$$
where $\chief$ is the length of a chief series for $G$ (which can include one type of simple group in the quotients).

\end{theorem}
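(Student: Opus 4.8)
The plan is to read the chain as four separate assertions, $\mu(G)=\cx(G)$, $\cx(G)\le\sx(G)$, $\sx(G)\le\chief(G)$ and $\chief(G)\le\JH(G)$, the first two of which are already in hand: $\mu(G)=\cx(G)$ is Theorem~\ref{mucpx} and $\cx(G)\le\sx(G)$ is Corollary~\ref{upperbound}. For the remaining two I would use a single elementary principle about finite groups: every normal series of $G$ refines to a chief series and every subnormal series refines to a composition series, and passing to a refinement never decreases the number of terms; this is combined with the Jordan--H\"older theorems, which say that all chief series of $G$ have the common length $\chief(G)$ and all composition series have the common length $\JH(G)$.

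For $\sx(G)\le\chief(G)$ I would take the socle characteristic series $\1=V_0\lhd V_1\lhd\cdots\lhd V_\ell=G$ of Definition~\ref{length}, so that $\ell=\sx(G)$. The key point to invoke is that this is a \emph{normal} series, i.e. each $V_i$ is normal (indeed characteristic) in $G$: this is Fact~\ref{socle-fact}(\ref{socCharSeries}). Refining it to a chief series of $G$ yields a chief series with at least $\ell$ terms, and by Jordan--H\"older that chief series has length exactly $\chief(G)$; hence $\sx(G)=\ell\le\chief(G)$. If one wants the refinement made explicit, note that for each $i<\ell$ the factor $V_{i+1}/V_i=\soc(G/V_i)$ (Fact~\ref{socle-fact}(\ref{SocQuotients})) is a \emph{nontrivial} internal direct product of minimal normal subgroups of $G/V_i$ --- nontrivial because $G/V_i\ne\1$ --- so peeling these off one at a time, using Fact~\ref{ProdMinNorm}(1) and Dedekind's modular law to check that each successive factor is minimal normal in the relevant quotient of $G$, exhibits a genuine chief-series refinement in which each socle factor splits into at least one chief factor.

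For $\chief(G)\le\JH(G)$: a chief series is in particular a subnormal series, so refining it to a composition series produces a composition series with at least $\chief(G)$ terms, and that composition series has length $\JH(G)$ by Jordan--H\"older; hence $\chief(G)\le\JH(G)$. Explicitly, Fact~\ref{MinNormSubgroup} tells us each chief factor is a direct product of $t\ge 1$ isomorphic simple groups, hence carries a composition series of length $t$, and splicing these in gives the composition series.

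There is no substantive obstacle; the only thing requiring care is bookkeeping with the definitions --- checking that the socle series really is \emph{normal} rather than merely subnormal, and that the series we obtain from it by refinement is genuinely a chief series (each inserted factor minimal normal in the appropriate quotient) rather than just a normal series, so that the Jordan--H\"older theorem legitimately identifies its length with the invariant $\chief(G)$. An alternative route to $\sx(G)\le\chief(G)$ is strong induction on $|G|$: combine $\sx(G)=1+\sx(G/\soc(G))$ (Fact~\ref{socle-fact}(\ref{SmoothSocSeries})) with the inequality $\chief(G)\ge 1+\chief(G/\soc(G))$, which one gets by pulling a chief series of $G/\soc(G)$ back through $G\sur G/\soc(G)$, prepending any $G$-normal series from $\1$ up to $\soc(G)$, and refining.
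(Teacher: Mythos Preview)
Your proposal is correct and follows essentially the same route as the paper: cite Theorem~\ref{mucpx} and Corollary~\ref{upperbound} for the first two steps, then refine the socle series to a chief series for $\sx(G)\le\chief(G)$, and refine a chief series to a composition series for $\chief(G)\le\JH(G)$. You are in fact more careful than the paper in flagging that the socle series is genuinely normal (via Fact~\ref{socle-fact}(\ref{socCharSeries})) and in invoking Jordan--H\"older to identify the length of the refinement with the invariant $\chief(G)$; the paper's own proof leaves these points implicit.
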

\begin{proof}
We have seen $\mu(G)=\cx(G)$ in Theorem~\ref{mucpx} and that $\cx(G) \leq \sx(G)$ in Corollary~\ref{upperbound}.
One can obtain a chief series for $G$ by refining the socle series of $G$: the socle series has a quotients $V_{i+1}/V_i$, each  a product minimal normal subgroups of $V_{i+1}$, by adding appropriate subgroups one at a time between $V_i$ and $V_{i+1}$ one obtains a refined normal series with quotients isomorphic to these minimal normal subgroup. This is a chief series, hence $\sx(G) \leq \chief(G)$. 
Clearly $\chief(G)\leq \JH(G)$ since a single quotient in a chief series has one or more Jordan-H\"older factors. 
\end{proof}

\noindent But does $\sx = \cx$?   If $\sx$ satisfied the complexity axioms, it would follow by maximality of $\cx$ that the two are equal.
We'll now see this is not the case.

\subsection{Socle Length and the Complexity Axioms}

\begin{lemma}\label{SocOfProd}
For a direct product of finite groups $H\times K$, we have $$\soc(H\times K)= \soc(H) \times \soc(K).$$
\end{lemma}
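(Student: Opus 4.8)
The plan is to prove the equality $\soc(H\times K) = \soc(H)\times\soc(K)$ by showing each side is contained in the other, working at the level of minimal normal subgroups. The key structural input is the SNAG Factor Rigidity Lemma (Lemma~\ref{SNAGrigidity}) together with Fact~\ref{MinNormSubgroup}, which tells us every minimal normal subgroup is a span of gems (a direct product of isomorphic simple groups).

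First I would establish the inclusion $\soc(H)\times\soc(K) \leq \soc(H\times K)$. It suffices to show that if $M$ is a minimal normal subgroup of $H$, then $M\times\1$ is a minimal normal subgroup of $H\times K$ (and symmetrically $\1\times M'$ for $M'$ minimal normal in $K$); since $\soc(H)\times\soc(K)$ is generated by all such subgroups, the inclusion follows. Normality of $M\times\1$ in $H\times K$ is clear. For minimality, suppose $\1\neq L \lhd H\times K$ with $L\leq M\times\1$; then $L = \pi_H(L)\times\1$ with $\pi_H(L)\lhd H$ (using that $L$ is normal in $H\times K$ forces its $H$-component to be normal in $H$), and $\1\neq\pi_H(L)\leq M$, so minimal normality of $M$ in $H$ gives $\pi_H(L)=M$, i.e. $L = M\times\1$.

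Next I would establish the reverse inclusion $\soc(H\times K) \leq \soc(H)\times\soc(K)$. Let $M$ be any minimal normal subgroup of $H\times K$; it is enough to show $M\leq\soc(H)\times\soc(K)$, since these $M$ generate the socle. Let $\pi_H,\pi_K$ be the two projections. I would argue that $M$ is contained in $(M\cap(H\times\1))\times(M\cap(\1\times K))$; this is the crux. To see it: if $M$ contains an element $(h_0,k_0)$ with $h_0\neq 1$ lying in some SNAG direct factor of $M$, apply SNAG rigidity (with the SNAG playing the role of $H$ in Lemma~\ref{SNAGrigidity}, after writing $M$ as an internal direct product of isomorphic simple groups via Fact~\ref{MinNormSubgroup}) to conclude that whole SNAG factor splits off; and when $M$ is elementary abelian one argues directly that, being a minimal normal subgroup, $M\cap(H\times\1)$ and $M\cap(\1\times K)$ are normal in $H\times K$, hence each is either trivial or all of $M$ — and if both were trivial then $M$ would embed diagonally, contradicting that the diagonal copy is not normal when $H,K$ act. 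The cleanest route, in fact, is: $M\cap(H\times\1)$ is normal in $H\times K$ and contained in the minimal $M$, so equals $\1$ or $M$; likewise for $M\cap(\1\times K)$; if one of them is $M$ we are done (that factor lands in $\soc(H)$ or $\soc(K)$ respectively by minimal normality considerations); and they cannot both be $\1$ simultaneously because then $\pi_H|_M$ and $\pi_K|_M$ are both injective, making $M$ isomorphic to its images which commute with each other only if... — here I need to rule out a "twisted diagonal" minimal normal subgroup, which is where I expect the main obstacle to lie.

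The main obstacle, then, is ruling out (or handling) a minimal normal subgroup $M$ of $H\times K$ that projects injectively and nontrivially to both factors. If $M$ is nonabelian simple this is impossible because $\pi_H(M)\lhd H$ and $\pi_K(M)\lhd K$ would both be nontrivial, forcing $M\times\1$ and $\1\times M$-type behavior that contradicts simplicity together with the commuting relations; more carefully, SNAG rigidity applied to $H\times K$ directly forbids it. If $M$ is elementary abelian of prime order $p$, a twisted diagonal could a priori be normal — but then its normality in $H\times K$ forces $\pi_H(M)$ to be centralized by $H$ and $\pi_K(M)$ centralized by $K$, so in fact $M$ is central, and a central minimal normal subgroup of order $p$; such an $M$ sits inside $\soc(H\times K)$, and one checks $\pi_H(M)\leq\soc(H)$, $\pi_K(M)\leq\soc(K)$ because a central subgroup of order $p$ is a minimal normal subgroup of $H$ (resp. $K$) or trivial, so $M\leq\pi_H(M)\times\pi_K(M)\leq\soc(H)\times\soc(K)$ regardless. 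So the obstacle dissolves once one observes that injective projection plus normality pushes $M$ into the center of each factor, and central order-$p$ subgroups are automatically inside the socle. I would write this last case carefully, as it is the only genuinely subtle point; everything else is bookkeeping with projections and the already-proven splitting and rigidity lemmas.
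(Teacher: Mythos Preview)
Your argument ultimately works, but it takes a substantially longer road than the paper's. For the reverse inclusion the paper simply invokes Lemma~\ref{Min}: the projections $\pi_H,\pi_K$ are surjective, so for any minimal normal $M$ of $H\times K$ each of $\pi_H(M),\pi_K(M)$ is either trivial or minimal normal in the corresponding factor, hence contained in $\soc(H)$ resp.\ $\soc(K)$; since trivially $M\leq\pi_H(M)\times\pi_K(M)$, the inclusion $M\leq\soc(H)\times\soc(K)$ follows in one line with no case analysis. Your route instead analyzes the intersections $M\cap(H\times\1)$ and $M\cap(\1\times K)$, which forces you to confront the ``twisted diagonal'' case head-on and split according to whether $M$ is a SNAG-product or elementary abelian. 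That case analysis is correct (your centrality observation---that $M\cap(H\times\1)=\1$ forces $\pi_H(M)\leq Z(H)$ via the commutator $[(h,1),(m_1,m_2)]=([h,m_1],1)\in M\cap(H\times\1)$---is the key, and it actually handles the SNAG case too without needing Lemma~\ref{SNAGrigidity}), but note your intermediate claim that $M\subseteq (M\cap(H\times\1))\times(M\cap(\1\times K))$ is false precisely in the diagonal case you later treat, so drop it. The paper's approach buys brevity and uniformity; yours is self-contained in that it avoids Lemma~\ref{Min}, at the cost of the extra case-work.
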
 
\begin{proof}
If $M$ is minimal normal in $H$ then $M\times 1$ is minimal normal in $H\times K$; similarly, if $M'$ minimal normal in $K$, then $\1\times M'$ is minimal normal in $H\times K$. It follows that $\soc(H) \times \soc(K)$ is contained in $\soc(H\times K)$. Conversely, if $M$ is minimal normal in $H\times K$, then $M$ projects to a minimal normal subgroup or $\1$ in each factor (Lemma~\ref{Min}), hence $M\leq \pi_H(M)\times \pi_K(H)$, where $\pi_H$ and $\pi_K$ are the projections.  Since $M\neq \1$, either one or both of these projections is minimal normal in $H$ resp.\ $K$.   Thus $M\leq  \soc(H)\times \soc(K)$,  whence $\soc(H\times K)\leq  \soc(H)\times \soc(K)$.   This proves the lemma.
\end{proof}

\begin{proposition}

Socle length $\sx$ satisfies 
the constructability, product, and initial condition axioms on finite groups.
\end{proposition}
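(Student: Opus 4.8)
The plan is to verify each of the three named axioms for $\sx$ separately, all three following without difficulty from the structural facts already established. The initial condition is immediate. The product axiom is the substantive one and is driven by Lemma~\ref{SocOfProd}. The constructability axiom reduces, via Lemma~\ref{construct-simple}, to checking that $\sx(S)\le 1$ for every simple group $S$, which is Fact~\ref{socle-fact}(\ref{spxsimple}).

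First I would dispose of the \emph{initial condition}: $\soc(\1)=\1$, so $V_0=V_1=\1=G$ and by definition $\sx(\1)=0$. Next, \emph{constructability}: by Lemma~\ref{construct-simple} it suffices to show $\sx(S)\le 1$ whenever $S$ is simple; but a simple group $S$ is its own unique minimal normal subgroup, so $\soc(S)=S$, hence $V_1=G$ and $\sx(S)=1$. (More generally Fact~\ref{socle-fact}(\ref{spxsimple}) gives $\sx=1$ on any nontrivial span of gems.) Thus constructability holds.

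For the \emph{product axiom}, let $G=H\times K$ and set $a=\sx(H)$, $b=\sx(K)$; I claim $\sx(G)=\max(a,b)$. The key point is that the socle characteristic series of $H\times K$ is, level by level, the direct product of those of $H$ and $K$. Write $V_i^H, V_i^K, V_i^G$ for the respective series. By induction on $i$ I would show $V_i^G = V_i^H\times V_i^K$: the base case $i=0$ is trivial; for the inductive step, $G/V_i^G \cong (H/V_i^H)\times(K/V_i^K)$, and by Lemma~\ref{SocOfProd} applied to this quotient, $\soc(G/V_i^G)=\soc(H/V_i^H)\times\soc(K/V_i^K)$, whose preimage in $G$ under the quotient map is exactly $V_{i+1}^H\times V_{i+1}^K$. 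Now $V_i^G=G$ iff $V_i^H=H$ and $V_i^K=K$, i.e.\ iff $i\ge a$ and $i\ge b$; the least such $i$ is $\max(a,b)$, so $\sx(H\times K)=\max(\sx(H),\sx(K))$, as required.

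I do not anticipate a genuine obstacle here; the mild care needed is in the product-axiom induction, namely confirming that ``preimage of a direct product of subgroups under the product of two quotient maps is the direct product of the preimages,'' which is routine, and in handling the degenerate cases ($H$ or $K$ trivial) where one factor's series stabilizes early — covered automatically by the $\max$. The proposition as stated asserts only these three axioms; the quotient and normal axioms for $\sx$ (and the \emph{failure} of the extension axiom) are presumably treated in the lemmas and counterexample that follow, so I would not address them here.
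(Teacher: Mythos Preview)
Your proposal is correct and follows essentially the same approach as the paper: the initial condition and constructability arguments are identical, and for the product axiom both proofs hinge on Lemma~\ref{SocOfProd} applied recursively through the socle series. The only cosmetic difference is that you induct on the level $i$ to show $V_i^G=V_i^H\times V_i^K$ directly, whereas the paper inducts on $|G|$ using $\sx(G)=1+\sx(G/\soc(G))$; these are the same argument organized slightly differently.
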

\begin{proof}

Socle length satisfies these axioms: 

\underline{Initial condition.} We have $\sx(\1)=0$ by the definition of socle length.

\underline{Constructability axiom.}  Every finite group can be constructed from simple groups by iterated extension from simple groups $S$, namely its Jordan-Hölder factors,  
which have $\sx(S)=1$ by Lemma~\ref{spxsimple}.

\underline{Product axiom.}  Let $G=H\times K$.   
We now use strong induction on $|G|$ to show $\sx(G)=\max(\sx(H),\sx(K))$. 
If $G=\1$, then $\sx(G)=0$ and the assertion holds.  Otherwise,
by Lemma~\ref{SocOfProd}, $$\soc(G)=\soc(H\times K) =\soc(H)\times \soc(K),$$ 
so $$G/\soc(G)\cong (H \times K)/(\soc(H)\times \soc(K)) \cong H/\soc(H) \times K/\soc(K),$$
where the last isomorphism holds since the socles are normal in the respective factors (e.g. \cite[Theorem 2.30]{Rotman}).
By induction hypothesis, $$\sx(G/\soc(G))=\max(\sx(H/\soc(H)), \sx(K/\soc(K))).$$ Therefore, by Fact~\ref{socle-fact}(\ref{SmoothSocSeries}) applied to $G$, $H$ and $K$, 
$$\sx(G)=1+\sx(G/\soc(G))=1+ \max(\sx(H/\soc(H)), \sx(K/\soc(K)) $$
$$=\max(1+\sx(H/\soc(H)), 1+\sx(K/\soc(K)))=\max(\sx(H), \sx(K)),$$
completing the induction.
 \end{proof}

\begin{counterexample}[Extension Axiom Fails for Socle Length]\label{pgroup}
Consider $$G= \langle (1,2), (3,4), (5,6), (7,8), (1,3,5,7)(2,4,6,8)\rangle.$$
It is not hard to see that $|G|=64$ and $G\cong {\mathbb Z}_2 \wr {\mathbb Z}_4$, the wreath product of cyclic groups of order 4 and 2, projecting onto the $\Z_4$ factor.\footnote{To see the isomorphism, using depth-preserving maps of a two-level tree branches 4 ways from the root to depth 1, and 2 ways to 8 nodes at depth 2. Then
the first 4 generators each fix the nodes at depth 1 and permute the two nodes under a node at depth one. The final generator cyclically permutes the depth 1 nodes but maps the nodes at depth two trivially. }

The center $Z(G)$ of $G$ is a two-element group $$Z(G)=\langle (1,2)(3,4)(5,6)(7,8)\rangle$$
and intersects any normal subgroup $N\neq \1$ of $G$ nontrivially since $G$ is a $p$-group 
(as is well-known $Z(G)\cap N\neq \1$ for $G$ a $p$-group since $N$ is a union of conjugacy classes, but of order a power of $p$, hence there
most be conjugacy classes other than $\1$ of size 1, i.e., other central elements in $N$, since the sum of the sizes of conjugacy classes is $|N|\cong 0$ modulo $p$.). 
It follows that this two-element $Z(G)$ is the unique minimal normal subgroup of $G$.  Thus $\soc(G)=Z(G)$.
Direct computation shows $\sx(G)=4$, with socle series\\ $\1\lhd V_1 \lhd V_2 \lhd V_3 \lhd V_4= G,$ where
$V_1=Z(G)$, $V_2= \langle (3,4)(7,8), (1,2)(3,4)(5,6)(7,8)\rangle$, $V_3=\langle (1,5)
  (2,6)(3,7)(4,8), (5,6)(7,8), (3,4)(7,8) \rangle, $
  with successive quotients isomorphic to $\Z_2$, $\Z_2$, $\Z_2\times \Z_2$, $\Z_2 \times \Z_2$.

On the other hand consider $N=\langle (3,4)(7,8), (1,2)(5,6) \rangle.$  
One easily checks $N$ is normal in $G$ and is a Klein 4-group.  Since it is a direct product of simple groups, $\sx(N)=1$.  Direct computations show $Q=G/N$ is isomorphic to a semidirect product $(\Z_4 \times \Z_2) \rtimes \Z_2 \cong \langle (2,6)(5,8), (1,2,3,5)(4,6,7,8)\rangle$ with socle
$Q_1=\langle (1,4)(2,6)(3,7)(5,8), (1,3)(2,5)(4,7)(6,8) \rangle \cong \Z_2 \times \Z_2,$ and
$Q/Q_1 \cong \Z_2\times \Z_2$, so $\sx(G/N)=2$.
Now $$\sx(G)=4 \not\leq 1+2 = \sx(N)+\sx(G/N),$$ so this is a counterexample to the extension axiom for $\sx$. 
\end{counterexample}  

\begin{counterexample}
The smallest counterexample to extension for $\sx$ is the 32-element group 
$H=\langle (1,2)(3,5)(4,6)
      (7,8), (2,5,6,8)(3,7), (2,6)(5,8)\rangle$, whose 
 socle length is $3$ with socle series is $\1 \lhd V_1 \lhd V_2 \lhd V_3=H$. 
 The center of $H$ is a unique minimal normal subgroup $$V_1=\soc(H)=Z(H)=\langle (1,2)(3,5)(4,6)(7,8)\rangle\cong \Z_2,$$
      $$V_2=\langle (2,6)(5,8), (1,7)(2,8)(3,4)(5,6), (1,4)(2,6)(3,7)(5,8)\rangle\cong \Z_2\times\Z_2\times \Z_2,$$ whose quotient by $\soc(H)$ is $\Z_2\times \Z_2$, with $H/V_2\cong \Z_2\times \Z_2$. So $\sx(H)=3$.
      However, since
	$V_2\cong \Z_2\times\Z_2\times \Z_2$ and $H/V_2\cong\Z_2\times \Z_2$, shows $\cx(H)\leq 2$,
	 but $\cx(H)\neq 1$ since $H$ is not the direct product of $\Z_2$'s as it has an element of order $4$, whence $\cx(H)=2$.
	Now $\cx(H)=2<3=\sx(H)$. Note also that the $\sx(H)$ is {\em not} bounded above by $\sx(V_2)+\sx(H/V_2)=1+1=2$.
\end{counterexample}
Since $\mu=\cx$ satisfies the extension axiom by Prop.~\ref{mu-axioms}, we have this
\begin{corollary}\label{spx-no-ext}
$\sx$ does not satisfy the extension axiom.  Hence, $\cx\neq \sx$.
\end{corollary}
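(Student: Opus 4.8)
The plan is to derive both assertions directly from the counterexamples just exhibited, together with the already-established fact that $\cx$ satisfies the extension axiom. First I would invoke Counterexample~\ref{pgroup}: there we produced a concrete finite group $G \cong \Z_2 \wr \Z_4$ of order $64$ and a normal Klein four-subgroup $N$ with $\sx(N) = 1$, $\sx(G/N) = 2$, and $\sx(G) = 4$. Since $4 \not\leq 1 + 2$, the inequality $\c(G) \leq \c(N) + \c(G/N)$ of the extension axiom fails for $\c = \sx$, which settles the first assertion. (The $32$-element group $H$ of the subsequent counterexample furnishes a smaller witness, with $\sx(V_2) + \sx(H/V_2) = 1 + 1 = 2 < 3 = \sx(H)$.)

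For the second assertion I would argue at the level of the two functions: if the identity $\cx = \sx$ held, then, since $\cx = \mu$ satisfies the extension axiom by Proposition~\ref{mu-axioms}(\ref{mu-extension}) (equivalently by Theorem~\ref{mucpx}), so would $\sx$ — contradicting what was just shown. Hence $\cx \neq \sx$. One can in fact read off a single group on which they disagree: in the $32$-element example one has $\cx(H) = 2$ while $\sx(H) = 3$, and together with the universal bound $\cx(G) \leq \sx(G)$ from Corollary~\ref{upperbound} this already exhibits socle length as an upper bound for hierarchical complexity that is not everywhere an equality.

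No real obstacle remains at this stage: the genuinely difficult step — constructing an explicit group whose socle series is strictly longer than some subnormal series with span-of-gems quotients — was carried out in Counterexample~\ref{pgroup}, exploiting that in a $p$-group the socle collapses to the (small) centre while a well-chosen subnormal chain can use large elementary abelian quotients. The Corollary is then just the bookkeeping that packages this disparity against the extension axiom satisfied by $\cx$.
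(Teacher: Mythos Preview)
Your proposal is correct and follows essentially the same approach as the paper: invoke the explicit counterexample $G\cong \Z_2\wr\Z_4$ (or the smaller $32$-element $H$) to show $\sx$ violates the extension axiom, then conclude $\cx\neq\sx$ because $\cx=\mu$ does satisfy extension by Proposition~\ref{mu-axioms}. The paper's own proof is in fact just the one-line remark ``Since $\mu=\cx$ satisfies the extension axiom by Prop.~\ref{mu-axioms}, we have this,'' relying on the preceding counterexamples exactly as you do.
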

\begin{proposition}

$\sx$ satisfies the quotient and normal subgroup axioms.
\end{proposition}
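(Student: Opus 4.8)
The plan is to prove the quotient axiom first and then derive the normal subgroup axiom from it. Since $\sx$ is plainly an invariant of isomorphism type, it is enough to establish the quotient axiom in the form $\sx(G/N)\le\sx(G)$ for $N\lhd G$.

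For the quotient axiom I would let $q\colon G\sur G/N$ be the natural map, let $\1=V_0\lhd V_1\lhd\cdots\lhd V_\ell=G$ be the socle characteristic series of $G$ (so $\ell=\sx(G)$), and let $\1=W_0\lhd W_1\lhd\cdots$ be the socle characteristic series of $G/N$. The crux is the claim that $q(V_i)\le W_i$ for all $i$, proved by induction on $i$; the case $i=0$ is clear. For the inductive step, note that the composite $G\to (G/N)/q(V_i)$ has kernel containing $V_i$, hence factors through $G/V_i$, and it carries $V_{i+1}/V_i=\soc(G/V_i)$ onto $q(V_{i+1})/q(V_i)$. Since $\soc(G/V_i)$ is generated by minimal normal subgroups of $G/V_i$ (Fact~\ref{socle-fact}(\ref{socisproduct})) and homomorphic images of minimal normal subgroups are minimal normal or trivial (Lemma~\ref{Min}), it follows that $q(V_{i+1})/q(V_i)\le\soc\bigl((G/N)/q(V_i)\bigr)$. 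Applying Lemma~\ref{Min} a second time to the surjection $(G/N)/q(V_i)\sur(G/N)/W_i$ (available because $q(V_i)\le W_i$ by the induction hypothesis) maps that socle into $\soc\bigl((G/N)/W_i\bigr)=W_{i+1}/W_i$; pulling back through $q$ gives $q(V_{i+1})\le W_{i+1}$. With the claim in hand, $G/N=q(V_\ell)\le W_\ell$, so the socle series of $G/N$ exhausts the group by step $\ell$ and $\sx(G/N)\le\ell=\sx(G)$.

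For the normal subgroup axiom I would take $N\lhd G$ and induct on $\ell=\sx(G)$. If $\ell\le1$ then $G$ is trivial or a span of gems, so $N$ is too by the detailed Splitting Lemma~I (Lemma~\ref{detailedSpanOfGems}(\ref{normalsubgroup})), giving $\sx(N)\le1$. If $\ell>1$, set $V_1=\soc(G)$; then $NV_1/V_1\lhd G/V_1$ and $\sx(G/V_1)=\ell-1$ by Fact~\ref{socle-fact}(\ref{SmoothSocSeries}), so by induction $\sx(NV_1/V_1)\le\ell-1$, whence $\sx\bigl(N/(N\cap V_1)\bigr)\le\ell-1$ since $NV_1/V_1\cong N/(N\cap V_1)$. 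Splitting Lemma~II (Lemma~\ref{SpanOfGems2}) gives $N\cap V_1=N\cap\soc(G)\le\soc(N)$, so $N/\soc(N)$ is a quotient of $N/(N\cap V_1)$, and the quotient axiom just proved yields $\sx(N/\soc(N))\le\ell-1$. Finally, if $N\neq\1$ then $\sx(N)=1+\sx(N/\soc(N))\le\ell=\sx(G)$ by Fact~\ref{socle-fact}(\ref{SmoothSocSeries}), and the case $N=\1$ is trivial.

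The step I expect to be the main obstacle is the claim $q(V_i)\le W_i$: because $q(V_i)$ may be a proper subgroup of $W_i$, one cannot identify $q(V_{i+1})/q(V_i)$ with the socle of $(G/N)/q(V_i)$, only contain it there, and must therefore push that containment forward a second time along $(G/N)/q(V_i)\sur(G/N)/W_i$. Everything else is routine bookkeeping with the correspondence and isomorphism theorems together with the socle-series facts already established.
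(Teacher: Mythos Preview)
Your quotient argument is essentially the paper's own: both prove by induction that $q(V_i)\le W_i$, pushing minimal normal subgroups of $G/V_i$ forward via Lemma~\ref{Min} along a surjection landing in $(G/N)/W_i$. The paper does this in one step using the composite $G/V_i\sur (G/N)/W_i$; you factor it through $(G/N)/q(V_i)$, which is equivalent.

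Your normal-subgroup argument, however, takes a genuinely different route. The paper runs a parallel induction on $i$ to establish the finer claim $V_i\cap N\le W_i$ (where the $W_i$ are now the socle series of $N$), invoking Splitting Lemma~II at each step to decompose $\soc(G/V_i)\cap (N/V_i)$ as a product of minimal normal subgroups of $N/V_i$ and then pushing these through the surjection $N/(V_i\cap N)\sur N/W_i$. You instead induct on $\ell=\sx(G)$ and reduce the normal case to the quotient case you have just established: the inductive hypothesis handles $NV_1/V_1\lhd G/V_1$, the second isomorphism theorem transfers this to $N/(N\cap V_1)$, and a single appeal to Splitting Lemma~II (part~(2)) plus the quotient axiom carries it down to $N/\soc(N)$. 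Your approach is shorter and avoids the elementwise bookkeeping; the paper's approach yields the stronger pointwise comparison $V_i\cap N\le W_i$ between the two socle series, which may be of independent interest but is not needed for the bare inequality $\sx(N)\le\sx(G)$.
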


\begin{lemma} Let $G$ be a finite group and $N$
a normal subgroup of $G$.   
Then, the socle characteristic series of $N$ and  of $G/N$
are each no longer than that of $G$.
That is, (1) $\sx(N) \leq \sx(G)$  and (2) $\sx(G/N) \leq \sx(G)$.
\end{lemma}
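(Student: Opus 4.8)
The plan is to compare the socle characteristic series $\1 = V_0 \lhd V_1 \lhd \cdots \lhd V_\ell = G$ of $G$ (so that $\ell = \sx(G)$) with the socle series of $N$, respectively of $G/N$, by a parallel induction on the term index. The fact driving both arguments is the following consequence of Lemma~\ref{Min}, which I would record first: if $\varphi\colon X \twoheadrightarrow Y$ is a surjection of finite groups, then $\varphi(\soc(X)) \leq \soc(Y)$ --- indeed $\soc(X)$ is generated by the minimal normal subgroups of $X$, each of which $\varphi$ sends either to a minimal normal subgroup of $Y$ or to $\1$, so the image of the generating set lies in $\soc(Y)$. In particular $\varphi$ induces a surjection $X/\soc(X) \twoheadrightarrow Y/\soc(Y)$. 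I will also use repeatedly that $\soc(X/V_i(X)) = V_{i+1}(X)/V_i(X)$ (Fact~\ref{socle-fact}).

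For (2), let $\pi\colon G \twoheadrightarrow G/N$ and write $V_i' := V_i(G/N)$. I would show $\pi(V_i) \leq V_i'$ by induction on $i$, the case $i = 0$ being trivial. For the step: $\pi$ induces a surjection $G/V_i \twoheadrightarrow (G/N)/\pi(V_i)$ under which $V_{i+1}/V_i = \soc(G/V_i)$ maps into $\soc((G/N)/\pi(V_i))$ by the image property above; composing with the surjection $(G/N)/\pi(V_i) \twoheadrightarrow (G/N)/V_i'$ (which exists because $\pi(V_i) \leq V_i'$ by the induction hypothesis) and applying the image property once more, the image of $V_{i+1}$ in $(G/N)/V_i'$ lies in $\soc((G/N)/V_i') = V_{i+1}'/V_i'$, so $\pi(V_{i+1}) \leq \pi(V_{i+1})V_i' \leq V_{i+1}'$. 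Taking $i = \ell$ yields $G/N = \pi(V_\ell) \leq V_\ell(G/N)$, hence $\sx(G/N) \leq \ell = \sx(G)$.

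For (1), I would prove the parallel statement $V_i(G) \cap N \leq V_i(N)$ by induction on $i$; taking $i = \ell$ then forces $N = V_\ell(G) \cap N \leq V_\ell(N)$, so $\sx(N) \leq \ell$. Write $V := V_i(G)$ and $W := V_i(N)$, with $V \cap N \leq W$ as the induction hypothesis, and take $x \in V_{i+1}(G) \cap N$. Then $xV \in \soc(G/V)$, and since $x \in N$ we have $xV \in \soc(G/V) \cap (NV/V)$; by Splitting Lemma~II (Lemma~\ref{SpanOfGems2}(2)), applied inside $G/V$ to the normal subgroup $NV/V$, this element lies in $\soc(NV/V)$. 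Transporting along the canonical isomorphism $NV/V \cong N/(N\cap V)$, which preserves socles, gives $x(N\cap V) \in \soc(N/(N\cap V))$; finally the surjection $N/(N\cap V) \twoheadrightarrow N/W$ (available since $N \cap V \leq W$) carries this into $\soc(N/W) = \soc(N/V_i(N))$ by the image property, so $x \in V_{i+1}(N)$, closing the induction.

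Apart from second-isomorphism-theorem bookkeeping and the cited facts about the socle series, the one real obstacle is the inductive step for (1): in contrast with the quotient case, the restricted chain $V_i(G) \cap N$ does not carry the socle series of $N$ along with it, and $V_i(G) \cap N$ can be strictly smaller than $V_i(N)$ --- already $\Z_2 \times \Z_2 \lhd D_4$ has $\soc(\Z_2\times\Z_2) = \Z_2\times\Z_2$ whereas $\soc(D_4)$ meets it in only a $\Z_2$ --- so one cannot just intersect the series of $G$ with $N$ termwise and hope the terms coincide with the socle terms of $N$. The fix is precisely the two-step passage above: first land in $\soc(NV/V)$ via Splitting Lemma~II, then push forward to $\soc(N/W)$ via the minimal-normal-subgroup image property. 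This explains why (1) needs both Splitting Lemma~II and Lemma~\ref{Min}, while (2) needs only the latter.
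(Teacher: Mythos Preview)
Your proof is correct and follows essentially the same approach as the paper: both argue by induction that $V_i(G)\cap N \leq V_i(N)$ and $\pi(V_i(G)) \leq V_i(G/N)$, using Splitting Lemma~II for the normal-subgroup case and the image property derived from Lemma~\ref{Min} for both. Your write-up is in fact slightly cleaner in making explicit the canonical isomorphism $NV/V \cong N/(N\cap V)$, which the paper's proof uses implicitly when it passes the $\tilde{N}_j$ through the map $\psi$.
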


\begin{proof} 
\noindent
 (1) \underline{Normal Subgroup Property}.    
Let  $G$ be finite group with socle
characteristic series as defined in Section~\ref{socleSeries} 
$$ \1=V_0\lhd V_1 \lhd \cdots \lhd V_{\ell} \lhd V_{\ell+1}=G.$$ 
For $N$ a normal subgroup of $G$, let
$$1 =W_0 \lhd W_1 \lhd \cdots \lhd W_{m} =N$$
be the socle characteristic series for $N$.   We claim
  
\begin{claim}\label{Nclaim}
For all natural numbers $i$, 
$V_i \cap N \leq W_i.$
\end{claim}
\noindent (By definition of the socle characteristic series, $W_i=N$ for $i\geq m$ and $V_i=G$ for $i\geq \ell$.)

Since $V_{\ell}=G$, from the claim it follows that $N=V_{\ell}\cap N \leq W_{\ell}$.  Thus $N=W_{\ell}$, but by definition of the socle characteristic series for $N$, $m$ is the least natural number such that
$W_m=N$, so $m \leq \ell$, i.e., $\sx(N)\leq \sx(G)$.

So to prove (1), it suffices to  prove Claim~\ref{Nclaim}. 

\noindent {Proof of Claim 1 by Induction}:
The assertion is trivial for $i=0$.

Suppose it is true up to $i$
 $$ V_i\cap N \subseteq W_i.$$
 Then $V_i \cap N \lhd W_i$ since $V_i \lhd$ and $N \lhd G$.    We must show $V_{i+1} \cap N \lhd W_{i+1}$.
 Consider the natural map $\varphi: G\sur G/V_i = \varphi(G)$.  Then $\varphi(N)\lhd G/V_i$, since $\varphi$ is surjective.
We shall denote $\varphi(N) =NV_i/V_i$ by $N/V_i$. 
By Splitting Lemma~II~(Lemma~\ref{SpanOfGems2}(1)), 
$$\soc(G/V) \cap N/V_i= \tilde{N}_1\cdots \tilde{N}_k$$ for direct factors $\tilde{N}_i$ minimal normal in $\varphi(N)$.  
By induction hypothesis, $V_i\cap N \lhd W_i$, so we have a natural surjective morphism:
$$\psi: \frac{N}{V_i\cap N}{\sur} \frac{N}{W_i}.$$
Each $\tilde{N}_i$ maps to $\1$ or a minimal normal subgroup of $N/W_i$ under $\psi$ by Lemma~\ref{Min}.
Hence $\tilde{N}_i$ maps to $\soc(N/W_i)=W_{i+1}/W_i$.
Therefore all of $\soc(G)\cap N/V_i$ maps to $W_{i+1}/W_i$. Now $\soc(N/V_i)$ is generated by minimal normal subgroups of $N/V_i$, and by Splitting Lemma II (Lemma~\ref{SpanOfGems2}(1)), so is $\soc(G/V_i) \cap N/V_i$. 
Now $\soc(G/V_i)=V_{i+1}/V_i$.  Suppose $x\in V_{i+1} \cap N$.   
Then $xV_i \in NV_i$,  and $x V_i = n_1 V_i \ldots n_kV_k$ where each $n_j V_i \in \tilde{N_j} (1\leq j \leq k)$. Applying $\psi$ we have 
 $$x V_i = n_1 V_i \ldots n_kV_i \mapsto n_1 W_i \ldots n_k W_i \in \soc(N/W_i),$$
 whence $$xW_i = n_1 \ldots n_k W_i \in W_{i+1}/W_i.$$
 with $n_1\ldots n_k \in W_{i+1}$. Therefore,
 $(n_1\ldots n_k)^{-1}x \in W_i \lhd W_{i+1}$.
 It follows that $x \in W_{i+1}$. Since $x$ was an arbitrary element of $V_{i+1}\cap N$, this completes the induction step. Therefore the assertion holds for all $i\geq 0$.  This proves Claim 1, hence (1) is proved.\\

\noindent
 (2) \underline{Quotient Axiom}.    Similarly, let 
$$\1 =W_0 \lhd W_1 \lhd \cdots \lhd W_{n} = H$$
be the socle characteristic series for $H=G/N$.  Using the notation $V_i/N$ to denote the homomorphic image of $V_i$ under the quotient map $G\sur G/N$, we claim
\begin{claim} \label{Qclaim}with each $M_i'\cap \soc(N)=\1$, $s\geq 0$
Let $\varphi:G \sur H$ be a surjective homomorphism.
For all $i\geq 0$,
$\varphi(V_i) \leq W_i$.
 \end{claim}
 
 \noindent (By definition, $W_i=G/N$ for $i\geq n$ and $V_i=G$ for $i\geq \ell$.)

Similarly to be before, since $V_{\ell}=G$, from the claim it follows that $\varphi(V_{\ell})  \leq W_{\ell}$.  Thus $G/N=W_{\ell}$, but by definition of the socle characteristic series for $G/N$, $n$ is the natural number such that
$W_n=G/N$, so $n \leq \ell$, i.e., $\sx(H)\leq \sx(G)$.

So to prove (2), it suffices to  prove Claim~\ref{Qclaim}. 

\noindent {\em Proof of Claim 2.}  We proceed by induction. The assertion is trivial for $i=0$, since $\varphi(V_0)=\1 \leq \1 =W_0$.  
Suppose $i\geq 0$ and $\varphi(V_i)\leq W_i$.
Then $\varphi$ is surjective so $\varphi(V_i)\lhd H$ and so $\varphi(V_i) \lhd W_i$.
We must show $\varphi(V_{i+1})\leq W_{i+1}$.

Consider a minimal normal subgroup $M$ of $G/V_i$.
By Lemma~\ref{Min},  $M$ is either trivial or a minimal normal subgroup of $G/V_i$.  Since $\varphi(V_i) \leq W_i$ by induction hypothesis, there is well-defined surjective morphism from $H/V_i$ to $H/W_i$; and by Lemma~\ref{Min},
$\varphi(M)$ is either trivial or a minimal normal subgroup in $H/W_i$.
Hence $M$ maps to a $\soc(W_{i+1}/W_i)=\soc(H/W_i)$.
Since $M$ is an arbitrary minimal normal subgroup of $G/V_i$, and $\soc(H/W_i)$ is generated by minimal normal subgroups of $H/W_i$, it follows (using Fact~\ref{socle-fact}(\ref{SocQuotients})) that $\soc(G/V_i)=\soc(V_{i+1}/V_i)=V_{i+1}/V_i$ maps homomorphically to $\soc(H/W_i)=\soc(W_{i+1}/W_i)=W_{i+1}/W_i$.

Now let $v$ be arbitrary element of $V_{i+1}$. We have that $$vV_i 
\mapsto \varphi(v)W_i $$
under the composite morphism from $G/V_i$ to $H/W_i$.
Therefore $\varphi(v)W_{i+1}$ lies in the socle of $H/W_i$. By definition of $W_{i+1}$, $\varphi(v)\in W_{i+1}$.  Therefore $\varphi(V_{i+1})\leq W_{i+1}$, and the result of Claim 2 follows by induction. This proves (2).
\end{proof} 

\section{Other Complexity Functions: Lower and Upper Bounds}

\subsection{Uncountably Many Complexity Functions} 
Here we show for any collection of finite simple groups we obtain a distinct maximal  complexity function
assigning $1$ to these simple groups and $0$ to all others.

\begin{definition}
Let $\S$ be a collection of finite simple groups.   Define the {\em characteristic complexity function with respect to $\S$} by 

   $$\chi_{\S}(G)
   = \begin{cases}
      1 & \mbox{if some Jordan-H\"{o}lder factor of $G$ lies in } {\S} \\
      0      & \mbox{ otherwise.}
    \end{cases}$$

\end{definition}

It is immediate to check 
\begin{lemma} 
The  function  $\chi_{\S}$ satisfies the complexity axioms.
\end{lemma}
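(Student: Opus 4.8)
The plan is to verify each of the six complexity axioms directly for $\chi_{\S}$, using the basic fact that the multiset of Jordan-H\"older factors of a group is built additively from those of a normal subgroup and the corresponding quotient (as already exploited in the proof of Lemma~\ref{JH}). The key structural observation is that $\chi_{\S}(G) = 1$ precisely when the multiset $\JH(G)$ of Jordan-H\"older factors meets $\S$, and $\chi_{\S}(G)=0$ otherwise; so $\chi_{\S}$ is the indicator of the predicate ``$G$ has a composition factor in $\S$''. I would phrase the whole argument in terms of this predicate.

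First I would dispose of the easy axioms. The \emph{initial condition} is immediate: $\1$ has no Jordan-H\"older factors, so $\chi_{\S}(\1)=0$. The \emph{constructability axiom} follows from Lemma~\ref{construct-simple}: for a simple group $S$ we have $\chi_{\S}(S)\in\{0,1\}\leq 1$, so constructability holds. For the \emph{quotient axiom}, if $\varphi:G\sur H$ then $\ker\varphi \lhd G$ and the Jordan-H\"older factors of $H\cong G/\ker\varphi$ form a sub-multiset of those of $G$; hence if some factor of $H$ lies in $\S$ so does some factor of $G$, giving $\chi_{\S}(H)\leq\chi_{\S}(G)$. For the \emph{normal axiom}, symmetrically, the factors of a normal subgroup $N$ form a sub-multiset of those of $G$, so $\chi_{\S}(N)\leq\chi_{\S}(G)$.

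Next the two remaining axioms. For \emph{extension}, suppose $N\lhd G$; then $\JH(G)$ is the disjoint union of $\JH(N)$ and $\JH(G/N)$. If $\chi_{\S}(G)=1$, some factor of $G$ lies in $\S$, hence it lies in $\JH(N)$ or in $\JH(G/N)$, so $\chi_{\S}(N)+\chi_{\S}(G/N)\geq 1 = \chi_{\S}(G)$; if $\chi_{\S}(G)=0$ the inequality $0\leq \chi_{\S}(N)+\chi_{\S}(G/N)$ is trivial. For the \emph{product axiom}, with $G=H\times K$ the factors of $G$ are exactly those of $H$ together with those of $K$, so $G$ has a factor in $\S$ iff $H$ does or $K$ does; thus $\chi_{\S}(H\times K) = \max(\chi_{\S}(H),\chi_{\S}(K))$. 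I do not anticipate a genuine obstacle here: every step is an immediate consequence of the additivity of Jordan-H\"older multisets under extension, and the only mild care needed is to handle the value $0$ separately in the extension inequality (where the bound is vacuous) versus the value $1$ (where one of the two summands must already be $1$). I would write this up as a short paragraph-by-paragraph check of the six axioms.
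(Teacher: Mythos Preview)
Your proof is correct and is exactly the routine verification the paper has in mind; the paper itself simply asserts ``It is immediate to check'' without spelling out the details. Your argument via the additivity of Jordan--H\"older multisets under extension is the natural way to fill this in, and each axiom check is valid.
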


\begin{examples} We have the following complexity functions:
\begin{enumerate} 
\item 
 $z=\chi_{\emptyset}$
\item Let $\SIMPLE$ be the finite simple groups. $\delta=\chi_{\SIMPLE}$. 
\item 
Let ${\SNAG}$ be the collection of all finite simple non-abelian groups. 
Then $\chi_{\SNAG}(G)=0$ if and only if $G$ is solvable.
\item Let ${\mathscr Z}$ be all simple groups $\Z_p$ of prime order.  Then
  $\delta=\max(\chi_{\SNAG},\chi_{\mathscr Z}).$
  \end{enumerate}
\end{examples}

\begin{definition}
If $\S$ is collection of finite simple groups. 
$\chi_{\S}^*$ is the maximal function pointwise dominating all complexity functions which
assign zero to all finite simple groups not in $\S$. That is, 
$$\chi_{\S}^*(G)=
\max\{\c(G): \c \mbox{ is a complexity function  with  $\c(K)\leq \chi_{\S}(K)$  for all $K$ simple}\}.$$
   
\end{definition}
As for $\cx$, which is $\chi^*_{\SIMPLE}$, one shows that $\chi_{\S}^*$ exists and is the unique
complexity function assigning $1$ to members of $\S$ and $0$ to other finite simple groups.

For a subnormal series  for a group $G$ in which each quotient $G_{i+1}/G_i$ is the direct product of simple groups, we say {\em the $i$th level of the series has members in $S$} if the $i$th quotient group has at least one factor in $\S$.
Let $\mu_\S(G)$ be the least number of levels over all such series for which the series has members of $\S$.

\begin{theorem}  For any set $\S$ of simple groups,
$\mu_{\S}=\chi^*_{\S}.$
\end{theorem}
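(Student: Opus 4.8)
The plan is to imitate the proof that $\mu=\cx$ (Theorem~\ref{mucpx}), keeping track only of the levels that contribute a factor from $\S$. Concretely I would establish two things: (a) $\mu_\S$ is a complexity function and $\mu_\S(K)\le\chi_\S(K)$ for every simple $K$, so that $\mu_\S\le\chi^*_\S$ by the maximality defining $\chi^*_\S$; and (b) every complexity function $\c$ with $\c(K)\le\chi_\S(K)$ for all simple $K$ satisfies $\c(G)\le\mu_\S(G)$ for all $G$, so that $\chi^*_\S\le\mu_\S$. Together these give $\mu_\S=\chi^*_\S$.

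Step (b) is the routine half and proceeds exactly as in the proof of Theorem~\ref{mucpx}. Fix a subnormal series $\1=G_0\lhd\cdots\lhd G_\ell=G$ with each $G_{i+1}/G_i$ a span of gems and with exactly $\mu_\S(G)$ of the levels containing a factor in $\S$. Iterating the extension axiom for $\c$ gives $\c(G)\le\sum_{i<\ell}\c(G_{i+1}/G_i)$; writing $G_{i+1}/G_i\cong\prod_j S_{ij}$ with the $S_{ij}$ simple, the product axiom gives $\c(G_{i+1}/G_i)=\max_j\c(S_{ij})$, and $\c(S_{ij})\le\chi_\S(S_{ij})\in\{0,1\}$ vanishes unless $S_{ij}\in\S$. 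Hence the $i$th summand is $0$ on non-$\S$-levels and at most $1$ on $\S$-levels, so $\c(G)\le\mu_\S(G)$.

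For step (a), the initial condition and the values $\mu_\S(K)=\chi_\S(K)$ on simple $K$ (whence constructability via Lemma~\ref{construct-simple}) are immediate from the definition of $\mu_\S$. The quotient and normal properties follow as in Proposition~\ref{mu-axioms}: projecting an optimal series of $G$ along $G\sur G/N$, or intersecting it with $N$, again yields a span-of-gems series (Splitting Lemma~I and its detailed variant, Lemmas~\ref{SpanOfGems} and \ref{detailedSpanOfGems}), and the extra remark needed is that under either operation each new quotient is a homomorphic image, resp.\ a normal direct factor, of the old one, hence by Lemma~\ref{detailedSpanOfGems} its multiset of simple factors is a subcollection of the old one; in particular no level acquires an $\S$-factor it did not have, so the number of $\S$-levels does not increase. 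The extension axiom follows by concatenating an optimal series for $N$ with the $\varphi^{-1}$-lift (along $\varphi\colon G\sur G/N$) of an optimal series for $G/N$: the $\S$-levels of the two pieces simply add, giving $\mu_\S(G)\le\mu_\S(N)+\mu_\S(G/N)$.

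The one point that genuinely goes beyond the $\mu$-case — and the step I expect to be the main obstacle — is the product axiom $\mu_\S(H\times K)=\max(\mu_\S(H),\mu_\S(K))$. The inequality ``$\ge$'' is as usual: project a span-of-gems series of $H\times K$ onto each coordinate, noting that projection can only delete simple factors from each quotient. For ``$\le$'', the trick used for $\mu$ — pad the shorter series and take the coordinatewise product series — is \emph{not} enough here, because a product level is an $\S$-level as soon as \emph{either} coordinate's level is, so if the $\S$-levels of $H$ and of $K$ sit at different heights the count can exceed $\max(\mu_\S(H),\mu_\S(K))$. The fix I would use is an alignment argument: given optimal span-of-gems series for $H$ and for $K$, insert trivial steps ($H_i=H_i$, which contribute trivial — hence span-of-gems — quotients) into each to reach a common length, arranging that each $\S$-level of the factor with smaller $\mu_\S$-value is placed at the same height as an $\S$-level of the other factor; this is possible because the other factor has at least as many $\S$-levels and the intervening runs of non-$\S$-levels (and the surplus $\S$-levels of the larger factor) can be interleaved and padded freely. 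The coordinatewise product of the two aligned series is then a span-of-gems subnormal series for $H\times K$ whose $\S$-levels are precisely those of the dominant factor, giving $\mu_\S(H\times K)\le\max(\mu_\S(H),\mu_\S(K))$. With all axioms verified, (a) holds, and combined with (b) this yields $\mu_\S=\chi^*_\S$.
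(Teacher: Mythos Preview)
Your proposal is correct and follows essentially the same route as the paper: verify the axioms for $\mu_\S$ (with the product axiom singled out as the only non-routine point), then show $\mu_\S$ dominates every $\c$ with $\c(K)\le\chi_\S(K)$ on simple $K$ by iterating the extension and product axioms along an optimal series. Your alignment argument for the product axiom---padding the two series with repeated terms so that the $\S$-levels of the factor with the smaller count sit exactly at heights that are also $\S$-levels of the other factor, then taking the coordinatewise product series---is precisely the content of the paper's Alignment Lemma (Lemma~\ref{alignment}), phrased via padding rather than by explicitly naming the product subgroups $H_{i_\ell}\times K_{j_\ell}$ and filling in between.
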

\begin{proof}
Just as in the proofs the $\mu=\cx$ by Prop.~\ref{mu-axioms} and Theorem~\ref{uniqueCpx}, one proves $\mu_
\S$ satisfies the axioms and dominates all complexity functions $\c$ with $\c(K)\leq \chi_{\S}(K)$ for all $K$ simple. The only subtle point is 
the product axiom, which is verified  in the Lemma~\ref{alignment}. \end{proof}

\begin{lemma}[Alignment Lemma]\label{alignment}
The product axiom holds for $\mu_\S$. 
\end{lemma}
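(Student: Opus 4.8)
I will establish the two inequalities
$\mu_\S(H\times K)\ge\max(\mu_\S(H),\mu_\S(K))$ and
$\mu_\S(H\times K)\le\max(\mu_\S(H),\mu_\S(K))$ separately, following the template
of the product-axiom proof for $\mu$ in Proposition~\ref{mu-axioms}(\ref{mu-product}) but
keeping track of which levels of a span-of-gems series carry a factor from $\S$. Call a
level $V_{i+1}/V_i$ of such a series an \emph{$\S$-level} if the span of gems
$V_{i+1}/V_i$ has a simple direct factor isomorphic to a member of $\S$, so that $\mu_\S(G)$
is the least number of $\S$-levels over all span-of-gems series of $G$ (the minimum is
attained, a composition series being one such series). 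Two facts are used repeatedly: first,
by Lemma~\ref{detailedSpanOfGems}(\ref{quotient}) and the explicit description of $G/N$ in
its proof, every homomorphic image of a span of gems is a span of gems each of whose simple
direct factors already occurs as a simple direct factor of the original, so a homomorphic
image of a non-$\S$-level is again a non-$\S$-level; second, since the trivial group is a
span of gems, inserting terms $H_i = H_i$ anywhere in a span-of-gems series changes neither
its quotients nor its set of $\S$-levels.

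For the inequality $\max(\cdot)\le\mu_\S(H\times K)$, I would take a span-of-gems series
$\1 = V_0 \lhd \cdots \lhd V_\ell = H\times K$ with the minimal number $\mu_\S(H\times K)$ of
$\S$-levels and apply $\pi_H\colon H\times K \sur H$. As in
Proposition~\ref{mu-axioms}(\ref{mu-product}), the subgroups $\pi_H(V_i)$ form a subnormal
series for $H$ whose $i$th quotient $\pi_H(V_{i+1})/\pi_H(V_i)$ is a homomorphic image of
$V_{i+1}/V_i$, hence a span of gems (Lemma~\ref{SpanOfGems}); and by the first fact above, if
$\pi_H(V_{i+1})/\pi_H(V_i)$ is an $\S$-level then so is $V_{i+1}/V_i$. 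Hence this series for
$H$ has at most $\mu_\S(H\times K)$ $\S$-levels, so $\mu_\S(H)\le\mu_\S(H\times K)$, and the
same argument through $\pi_K$ gives $\mu_\S(K)\le\mu_\S(H\times K)$.

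For the inequality $\mu_\S(H\times K)\le\max(\cdot)$, assume without loss of generality
$p:=\mu_\S(H)\ge q:=\mu_\S(K)$, and fix span-of-gems series for $H$ and for $K$ with exactly
$p$ and $q$ $\S$-levels respectively. The heart of the argument is an \emph{alignment} step:
using the second fact above, pad each of the two series with trivial terms until they have a
common length and every $\S$-level of the padded $K$-series sits in the same position as some
$\S$-level of the padded $H$-series. This is a routine left-to-right interleaving: scanning
the $K$-series, each non-$\S$-level of $K$ is placed opposite a fresh trivial term of $H$,
while the $r$th $\S$-level of $K$ is placed opposite the $r$th $\S$-level of $H$ (possible
since $p\ge q$), after first placing the intervening non-$\S$-levels of $H$ opposite fresh
trivial terms of $K$; once $K$ is exhausted, the remaining terms of $H$ — its $p-q$
still-unmatched $\S$-levels and the non-$\S$-levels around them — are placed opposite fresh
trivial terms of $K$. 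Forming the direct-product series $H_t'\times K_t'$ of the two padded
series yields, exactly as in Proposition~\ref{mu-axioms}(\ref{mu-product}), a subnormal series
of $H\times K$ all of whose quotients $(H_{t+1}'/H_t')\times(K_{t+1}'/K_t')$ are spans of
gems; and such a quotient is an $\S$-level iff $H_{t+1}'/H_t'$ or $K_{t+1}'/K_t'$ is, which by
construction happens iff $H_{t+1}'/H_t'$ is. So this series has exactly $p$ $\S$-levels,
whence $\mu_\S(H\times K)\le p = \max(\mu_\S(H),\mu_\S(K))$; together with the previous
paragraph this proves the product axiom for $\mu_\S$.

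\textbf{Main obstacle.} The only genuinely delicate point is making the alignment airtight:
the trivial terms must be inserted so that no $\S$-level of $K$ ever lands opposite a
non-$\S$-level (or a trivial term) of $H$, and this must be done without reordering either
series, which is not permitted. The observations that trivial quotients may be inserted
freely and that $q\le p$ reduce this to the elementary interleaving above; everything else —
that projections and direct products of subnormal series are subnormal, and the behaviour of
simple direct factors under quotients of spans of gems — is already available from
Lemmas~\ref{SpanOfGems} and~\ref{detailedSpanOfGems} and the proof of
Proposition~\ref{mu-axioms}.
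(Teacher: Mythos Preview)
Your proposal is correct and follows essentially the same approach as the paper: both directions use projection of a span-of-gems series (with the observation that quotients of spans of gems inherit their simple factors from the original, so non-$\S$-levels stay non-$\S$-levels) for the lower bound, and an alignment of the $\S$-levels of the two series for the upper bound. Your padding-with-trivial-terms description is a repackaging of the paper's construction, which instead inserts product subgroups $H_i\times K_j$ one coordinate at a time between the aligned pairs $H_{i_\ell}\times K_{j_\ell}\lhd H_{i_\ell+1}\times K_{j_\ell+1}$; the underlying combinatorics and the resulting series are the same.
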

\begin{proof} Let $H$ have a subnormal series with quotients direct products of simple groups
$$\1  =H_0 \lhd H_1 \lhd \cdots \lhd H_a=H,$$ with $h\leq a$ levels having factors in $\S$.
Similarly,  let $K$ have subnormal series of this type with $k \leq b$ levels having factors in $\S$
$$\1 =K_0 \lhd K_1 \lhd \cdots \lhd K_b=K.$$ 
Suppose these series have least possible $h$ and $k$, respectively.  That is, $\mu_\S(H)=h$ and $\mu_S(K)=k$.
If $H\times K$ had $\mu_\S(H\times K) < \max\{h,k\}$, then it would have a series projecting onto series for $H$ and $K$ with less than $h$ and $k$ levels with factors in $\S$, respectively, a contradiction (just as in the proof of Prop.~\ref{mu-axioms}), so $\mu_\S(H\times K) \geq \max\{\mu_\S(H),\mu_\S(K)\}$.

Next we claim $H\times K$ has subnormal series with quotients direct products of simple groups having exactly $\max\{h,k\}$ levels with factors in $\S$.  

Let $i_1 < \ldots < i_h$ be the indices in $[0,a-1]$ such that $H_{i_\ell +1}/H_{i_\ell}$ has factors in $\S$.
Similarly, let $j_1 < \ldots < j_k$ be the indices in $[0,b-1]$ such that $H_{j_\ell+1}/H_{j_\ell}$ has factors in $\S$.
We have $H_{i_\ell} \times K_{j_\ell } \lhd H_{i_\ell+1} \times K_{j_\ell+1}$, where if $h<k$ we take $H_{i_\ell}=H_{i_h}$ for all $\ell>h$, and similarly if $h>k$ we take $K_{j_\ell}=K_{j_k}$ for all $\ell>h$. 
Then the groups $$(H_{i_\ell+1} \times K_{j_\ell+1})/(H_{i_\ell} \times K_{j_\ell})\cong 
(H_{i_\ell+1}/ H_{i_\ell}) \times (K_{j_\ell+1} / K_{j_\ell})$$
are each a direct product of simple groups with factors coming  factors in $\S$.
It is easy to see that the normal inclusions $$H_{i_\ell} \times K_{j_\ell } \lhd H_{i_\ell+1} \times K_{j_\ell+1}$$ for $1\leq \ell \leq \max\{h,k\}$ can be extended to a subnormal series for $H\times K$ in which these are the only quotients with factors in $\S$:  One simply inserts products of factors from the above series for $H$ and $K$, e.g., include the subgroups $H_0 \times K_j$ for $j \leq j_1$, followed by $H_i\times K_{j_1}$ for $i < i_1$, and so on inserting product groups between $H_{i_\ell+1} \times K_{j_\ell+1}$ and $H_{i_{\ell+1}} \times K_{j_{\ell+1}}$ 
by first increasing the factors from the $H_i$ and then the ones from the $K_j$. Finally insert $H_{i_h+1}\times K_j$
for all $j$ with $b\geq j>{j_k+1}$, if any, followed by $H_i \times K$ for all $i$ with $a\geq i > {i_h+1}$, if any. This yields a subnormal series from $1$ to $H\times K$ having $\max\{h,k\}$ levels with factors in $\S$. 
This shows $\mu_\S(H\times K)\leq \max\{\mu_\S(H), \mu_\S(K)\}$ and completes the proof.\end{proof} 

\begin{corollary}
 There are uncountably many pairwise distinct complexity functions
on finite groups. In particular, if $\S,\S' \subseteq \SIMPLE$ are distinct then $\mu_\S\neq \mu_{S'}$.
\end{corollary}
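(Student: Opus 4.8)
The plan is to exhibit an injection from subsets of $\SIMPLE$ into the class of complexity functions on finite groups, namely $\S\mapsto\mu_\S$, and then to observe that $\SIMPLE$ has uncountably many subsets.

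First I would record that $\SIMPLE$ is countably infinite: the cyclic groups $\Z_p$ of prime order (or the alternating groups $A_n$, $n\geq 5$) already furnish infinitely many pairwise non-isomorphic finite simple groups, so the collection of subsets of $\SIMPLE$ has cardinality $2^{\aleph_0}$. By the preceding theorem, for each $\S\subseteq\SIMPLE$ the function $\mu_\S$ equals $\chi^*_\S$ and in particular is a genuine complexity function. Hence the entire statement reduces to checking that $\S\mapsto\mu_\S$ is injective.

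For the injectivity I would argue directly. Let $\S\neq\S'$ and pick, without loss of generality, a finite simple group $S$ with $S\in\S$ and $S\notin\S'$. A nontrivial simple group has, up to insertion of repeated (trivial-quotient) terms, the single span-of-gems subnormal series $\1\lhd S$, so in computing $\mu_\S(S)$ and $\mu_{\S'}(S)$ one only ever sees the one nontrivial level, whose quotient is $S$. Since $S\in\S$, that level has a factor in $\S$, giving $\mu_\S(S)=1$; since $S\notin\S'$, the same level has no factor in $\S'$, and trivial levels contribute nothing, so $\mu_{\S'}(S)=0$. Therefore $\mu_\S(S)\neq\mu_{\S'}(S)$, which is exactly the assertion $\mu_\S\neq\mu_{\S'}$. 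Combined with the uncountability of the family of subsets of $\SIMPLE$, this yields uncountably many pairwise distinct complexity functions on finite groups.

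There is essentially no obstacle here beyond the earlier development; the one point requiring a moment's care is the evaluation of $\mu_\S$ on a simple group $S$ — specifically, that no clever alternative span-of-gems subnormal series for $S$ can avoid having $S$ itself appear as a composition factor at some level. This holds because every span-of-gems subnormal series of the simple group $S$, after deletion of repeated terms, is forced to be $\1\lhd S$.
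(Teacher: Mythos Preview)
Your proof is correct and follows essentially the same approach as the paper: both pick a simple group $S$ in the symmetric difference of $\S$ and $\S'$, observe that $\mu_\S$ and $\mu_{\S'}$ disagree on $S$, and invoke the uncountability of the power set of $\SIMPLE$. Your version is slightly more explicit in justifying the values $\mu_\S(S)=1$ and $\mu_{\S'}(S)=0$, but the argument is the same.
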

\begin{proof}
$\mu_{\S}$ and $\mu_{\S'}$ disagree on each member of the symmetric difference $(\S\cup \S') \setminus (\S\cap \S')$ which is nonempty
since for $\S,\S' \subseteq \SIMPLE$ with $\S\neq \S'$.  Hence $\mu_\S \neq \mu_{\S'}$. Since there are countably many isomorphism classes of finite simple groups comprising $\SIMPLE$, this yields a distinct complexity function for each of the uncountably many subsets $\S$
of $\SIMPLE$.  \end{proof}

Since $\mu_S$ satisfies the complexity axioms and $\cx= \mu=\mu_\SIMPLE$ is the maximal complexity function on finite groups, we conclude:
\begin{theorem}
Let $\S \subseteq \SIMPLE$, the $\mu_\S$ is a lower bound for hierarchical complexity $\cx$. 
\end{theorem}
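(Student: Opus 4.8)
The plan is to obtain the bound as an immediate consequence of the maximality of $\cx$. Recall from Theorem~\ref{uniqueCpx} that $\cx$ is the unique maximal complexity function on finite groups, so that $\cx(G) \geq \c(G)$ for every finite group $G$ and every $\c$ satisfying the complexity axioms. Hence it suffices to check that $\mu_\S$ is itself a complexity function, and then specialize $\c = \mu_\S$.

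The verification that $\mu_\S$ satisfies the axioms runs parallel to the proof that $\mu = \cx$. The initial condition $\mu_\S(\1)=0$ is immediate, since $\1$ has no composition factors at all; constructability holds because every finite group is built by iterated extension from its Jordan-H\"older factors, each level of such a series being a single simple group; the quotient and extension axioms transfer essentially verbatim from the arguments in Proposition~\ref{mu-axioms}, now counting only those levels of a subnormal series whose quotient has a factor in $\S$ rather than all levels (a homomorphic image of a span of gems is again a span of gems, and concatenating preimage series multiplies the counts of $\S$-levels additively); and the one genuinely delicate axiom, the product axiom, is precisely the content of the Alignment Lemma (Lemma~\ref{alignment}). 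With all six axioms in hand, $\mu_\S$ is a complexity function, so maximality of $\cx$ gives $\mu_\S(G) \leq \cx(G)$ for all finite $G$, which is the assertion.

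I do not expect any real obstacle here, since the substantive work — especially the product axiom via the Alignment Lemma — has already been carried out. If one prefers to bypass re-invoking the axioms, there is an even shorter route: a subnormal series realizing $\mu(G)$ has exactly $\mu(G)$ levels, hence at most $\mu(G)$ of its levels can have a quotient containing a factor in $\S$, so by definition of $\mu_\S$ we get $\mu_\S(G) \leq \mu(G) = \cx(G)$ directly, using Theorem~\ref{mucpx}. Either argument yields the stated lower bound.
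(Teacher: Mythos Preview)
Your proposal is correct and follows essentially the same route as the paper: having established (via the Alignment Lemma and the analogues of Proposition~\ref{mu-axioms}) that $\mu_\S$ satisfies the complexity axioms, the bound $\mu_\S \leq \cx$ is immediate from the maximality of $\cx$. Your alternative one-line argument via a $\mu(G)$-realizing series is also valid and is a pleasant shortcut the paper does not state explicitly.
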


\subsection{Solvable Groups and the Embedding Axiom}\label{solv}

Next we consider complexity functions on finite solvable groups.
\begin{theorem}
There exists a unique maximal complexity function  $\cxsolv$ on finite solvable groups satisfying the axioms.
\end{theorem}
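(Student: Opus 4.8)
The plan is to mimic the construction of the maximal complexity function $\cx$ on all finite groups (Theorem~\ref{uniqueCpx}), restricting attention to the class of finite solvable groups. First I would define
$$\cxsolv(G) = \sup\{\c(G) : \c \text{ satisfies the complexity axioms on finite solvable groups}\}$$
for $G$ a finite solvable group. The first task is to check that this supremum is finite: the Jordan--H\"older function $\JH$ restricted to solvable groups still satisfies all the axioms except the product axiom, and by the argument of Lemma~\ref{JH}(\ref{DomByJH}) any $\c$ satisfying the axioms (product included) has $\c(G) \leq \JH(G)$. Hence the supremum is a well-defined natural number, bounded above by $\JH(G)$. Here one should note that the class of finite solvable groups is closed under quotients, normal subgroups, extensions, and direct products, so each axiom makes sense internally to this class and the building blocks of complexity at most $1$ (the simple groups arising as Jordan--H\"older factors) are again solvable, namely the cyclic groups $\Z_p$.

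Next I would verify that $\cxsolv$ itself satisfies the six complexity axioms, following the proof of Theorem~\ref{uniqueCpx} essentially verbatim. For the extension axiom: given solvable $G$ with solvable normal subgroup $N$, pick $\c$ maximizing $\c(G)$ to get $\cxsolv(G) = \c(G) \leq \c(N) + \c(G/N) \leq \cxsolv(N) + \cxsolv(G/N)$, using that $N$ and $G/N$ are solvable so the definitions apply. The quotient axiom, initial condition, and constructability axiom go through identically ($\Z_p$ is solvable simple, so every solvable group is an iterated extension of groups of complexity at most $1$). For the product axiom one repeats the two-sided estimate from Theorem~\ref{uniqueCpx}, choosing complexity functions maximizing $\c(G)$, $\c'(H)$, $\c''(K)$ respectively — all still within the class of complexity functions on solvable groups — and using the product axiom for each. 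Finally, uniqueness follows exactly as before: any two pointwise-maximal complexity functions on solvable groups dominate each other and hence coincide.

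The main obstacle — really the only subtle point — is confirming that one is genuinely working inside a self-contained category: every construction invoked (quotient, normal subgroup, extension, direct product, composition series) must stay within finite solvable groups, and the supremum defining $\cxsolv$ must range over a nonempty set. Nonemptiness is immediate since the trivial function $z$ and $\delta$ from Lemma~\ref{trivial} restrict to complexity functions on solvable groups. One should also observe that the restriction of the global $\cx$ to solvable groups is \emph{a} complexity function on solvable groups, so $\cxsolv \geq \cx|_{\text{solv}}$; in fact equality holds because $\cx = \mu$ is characterized by the span-of-gems series (Theorem~\ref{mucpx}) and for a solvable group all such spans of gems are elementary abelian, hence the minimal $\mu$-witnessing series stays inside solvable groups — but proving this equality is presumably deferred to later in Section~\ref{solv}, so for the present theorem it suffices to produce \emph{some} maximal complexity function and argue it is unique. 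I expect the bulk of the write-up to be the routine axiom-by-axiom check, with the proof being a near-transcription of Theorem~\ref{uniqueCpx} with the phrase ``finite group'' replaced by ``finite solvable group'' throughout.
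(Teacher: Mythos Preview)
Your proposal is correct and takes essentially the same approach as the paper, which simply states ``By the same reasoning as in Theorem~\ref{uniqueCpx}, there exists a unique maximal complexity function on solvable groups.'' Your write-up is in fact considerably more detailed than the paper's one-line proof, and your observation that the class of finite solvable groups is closed under the relevant constructions (so the axioms make sense internally) is exactly the point needed to justify that the argument transfers verbatim.
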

\begin{proof} By the same reasoning as in  Theorem~\ref{uniqueCpx}, the exists a unique maximal complexity function on solvable groups. \end{proof}

\begin{lemma}  For a finite solvable group $G$ and prime $p$, let $\log_p(G)$ be the greatest natural number $n$ such that $G$ has an element of order $p^n$.   Then $\log_p$ is a complexity function on solvable groups.
\end{lemma}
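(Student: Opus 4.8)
The plan is to verify that $\log_p$ satisfies each of the complexity axioms relative to the class of finite solvable groups, which is closed under subgroups, quotients, direct products, and extensions, so that every axiom is meaningful within it. Note first that $\log_p(G)$ is a well-defined natural number because a finite group has finite exponent, and that the initial condition is immediate since $\1$ has exponent $1=p^{0}$, so $\log_p(\1)=0$. I would also record the monotonicity of $\log_p$ under inclusion of subgroups: if $H\leq G$ and $h\in H$ has order $p^{\log_p(H)}$, then $h$ witnesses $\log_p(G)\geq\log_p(H)$; this already gives the Normal property, and in fact the Subgroup Axiom of Section~\ref{SubgroupSec}, which on solvable groups produces no Michelangelo-style collapse since a solvable group need not embed into any \emph{simple} solvable group.

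The remaining content is elementary arithmetic of element orders, and three of the four outstanding axioms require no solvability at all. For \textbf{Quotient}: given $\varphi\colon G\sur H$ and $\bar g\in H$ of order $p^{n}$ with $n=\log_p(H)$, any preimage $g$ satisfies $g^{k}=1\Rightarrow\bar g^{\,k}=1$, so $p^{n}\mid\operatorname{ord}(g)$ and a suitable power of $g$ has order exactly $p^{n}$, giving $\log_p(G)\geq\log_p(H)$. For \textbf{Product}: an element $(h,k)\in H\times K$ has order $\operatorname{lcm}(\operatorname{ord}(h),\operatorname{ord}(k))$, so if this equals $p^{n}$ then $\operatorname{ord}(h)=p^{a}$, $\operatorname{ord}(k)=p^{b}$ with $\max(a,b)=n$; taking $(h,k)$ to realize $\log_p(H\times K)$ gives $\max(\log_p(H),\log_p(K))\geq\log_p(H\times K)$, while the reverse inequality is the subgroup monotonicity applied to the two direct factors, so $\log_p(H\times K)=\max(\log_p(H),\log_p(K))$. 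For \textbf{Extension}: let $N\lhd G$ and $g\in G$ have order $p^{n}$ with $n=\log_p(G)$; the order of $gN$ in $G/N$ is $p^{a}$ for some $0\leq a\leq n$, hence $g^{p^{a}}\in N$ and $g^{p^{a}}$ has order $p^{\,n-a}$, so $\log_p(N)+\log_p(G/N)\geq(n-a)+a=n=\log_p(G)$.

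That leaves \textbf{Constructability}, which is the one step where solvability genuinely enters and hence the main point to get right. Every simple solvable group is cyclic of prime order $\Z_{q}$, for which $\log_p(\Z_{q})$ is $1$ when $q=p$ and $0$ otherwise, so $\log_p(S)\leq 1$ for every simple solvable $S$; by the argument of Lemma~\ref{constru->simple1} (cf.\ Lemma~\ref{construct-simple}), applied within the solvable groups --- a solvable group is an iterated extension of its Jordan--H\"older factors, each such a $\Z_{q}$ --- it follows that $\log_p$ satisfies Constructability on solvable groups. I would then remark that this is exactly the obstruction to $\log_p$ being a complexity function on \emph{all} finite groups: some finite simple group already contains an element of $p$-power order $p^{2}$ (for instance, a sufficiently large alternating group contains an even permutation of order $p^{2}$), so $\log_p(S)=2$ there and Constructability fails. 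In short, I expect the proof to be routine once one sees that all the work is localized in Constructability, the other axioms following from the order computations above.
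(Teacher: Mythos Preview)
Your proof is correct and covers the same axioms as the paper, with essentially identical arguments for the initial condition, product, subgroup/normal, and constructability axioms. The one genuine difference is in the \textbf{Extension} axiom: the paper embeds $G$ into the wreath product $N\wr Q$ via Kalu\v{z}nin--Krasner and computes $g^{p^{a+b}}=1$ coordinatewise there, whereas you argue directly inside $G$ by observing that if $g$ has order $p^{n}$ then $gN$ has order $p^{a}$ and $g^{p^{a}}\in N$ has order $p^{n-a}$. Your route is more elementary---it avoids the embedding theorem entirely---while the paper's route is thematically consistent with its emphasis on wreath-product decompositions. Your Quotient argument (lifting an element of $p$-power order from $H$ to $G$ and extracting its $p$-part) is also slightly more explicit than the paper's one-line version.
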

Caveat: $\log_p$ is not a complexity function on all finite groups since it can assign value greater than one to simple non-abelian groups. Indeed, by Lemma~\ref{EmbedInAlternatingGroup}, we can embed $\Z_{p^n}$ into a simple group $K_n$ results in $\log_p(K_n) \geq n$ for any positive $n$.

\begin{proof}

\noindent\underline{Product}.  If $G=H\times K$ and $g=(h,k)$, then the order of $g$ is the least common multiple the orders of $h$ and$k$.
Therefore, if $g\in G$ has order $p^n$, then $h$ and $k$ both have order dividing $p^n$, so they have orders $p^a$
and $p^b$, with $\max(a,b)=n$.  This entails that $\log_p(G)=\max\{\log_p(H),\log_p(K)\}$.

\noindent\underline{Extension}.  Suppose $a=\log_p(Q)$, $b=\log_p(N)$, and $G$ is an extension of $Q$ by $N$. Then $G$ embeds in a wreath product $N\wr Q$.
Let $g$ in $G$ be an element whose order  is a power of $p$, then in the embedding,
$g=(\n,q)$, $q\in Q$, $\n \in N \times \cdots \times N$ (the $|Q|$-fold direct power of $N$).
 We have $\log_p(N)=\log_p(N^Q)$ (since the product axiom holds). 
Now $$g^{p^a}=(\n',q^{p^a})=(\n',1)$$ for some $\n'\in N^Q$, whence
$$g^{p^{(a+b)}}=(g^{p^a})^{p^b}=(\n',1)^{p^b}=(\n'^{p^b},1)=(1,1)=1 \in G.$$ 
This shows the order of $g$ is at most $p^{a+b}$. This shows $$\log_p(G)\leq \log_p(N)+\log_p(Q),$$ i.e., the extension axiom holds.

\noindent\underline{Quotient}.  If $g$ has order $p^n$ in $G$, its image in any quotient $Q$ has order dividing $p^n$, whence
$\log_p(G)\geq \log_p(Q)$.

\noindent\underline{Normal Subgroup Axiom / Subgroup Axiom}.  If $N$ is a subgroup of $G$, whether normal or not, and $n\in N$ has order $p^n$, so $G$ also contains this element, hence $\log_p(G)\geq \log_p(N)$.

\noindent\underline{Constructability}.  Every finite solvable group can be constructed by iterated extension from simple cyclic groups $\Z_q$ of  prime order $q$.
Since $\log_p(\Z_q)=1$ for $p=q$ but is $0$ otherwise, this shows the constructability axiom holds.

\noindent\underline{Initial Condition}. Finally, the element of largest prime power in the trivial group has order $1=p^0$, so $\log_p$ of the trivial group is $0$.
\end{proof}

\begin{remark}
Note that $\log_p(S)$ can be greater than 1 if $S$ is a simple non-abelian group. Therefore $\log_p$ is {\em not} is complexity function on all finite groups as it fails to satisfy the constructability axiom.
\end{remark}

\begin{corollary}[Some Complex Bounds]
The following bounds hold related  the complexity $\cx_\solv$ of solvable groups. 
\begin{enumerate} 
\item $\log_p$ is unbounded since $\log_p(\Z_{p^n})=n$ for cycle groups of order $p^n$.
\item  For a finite solvable group $G$,
$$\cxsolv(G) \geq \sup_{\mbox{$p$ prime}} \log_p(G)
   = \max_{\mbox{$p$ prime divisor of $|G|$}} \log_p(G).$$
\item The complexity function $\cxsolv$ is unbounded. 
\item  $\sx \geq \cxsolv$.
\end{enumerate}
\end{corollary}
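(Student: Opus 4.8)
The plan is to reduce the four assertions to the previous lemma (that each $\log_p$ is a complexity function on finite solvable groups), the maximality of $\cxsolv$, and the socle-length bound already established. Part (1) is essentially a remark: $\Z_{p^n}$ is cyclic of order $p^n$, so a generator has order $p^n$ while by Lagrange no element of $\Z_{p^n}$ can have larger order; hence $\log_p(\Z_{p^n})=n$, and letting $n\to\infty$ shows $\log_p$ is unbounded.

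For (2) I would simply invoke the previous lemma: for each prime $p$ the function $\log_p$ satisfies all the complexity axioms on the class of finite solvable groups. Since $\cxsolv$ is \emph{maximal} among such functions, $\cxsolv(G)\geq\log_p(G)$ for every $p$, so $\cxsolv(G)\geq\sup_p\log_p(G)$. To see the supremum is attained among the prime divisors of $|G|$: if $p\nmid|G|$ then by Lagrange every element order divides $|G|$, so no element has order a positive power of $p$ and $\log_p(G)=0$; thus only the finitely many prime divisors of $|G|$ can contribute (and for $G=\1$ the common value is $0$). Part (3) is then a one-line consequence: by (1) and (2), $\cxsolv(\Z_{p^n})\geq\log_p(\Z_{p^n})=n$, unbounded in $n$.

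For (4) I plan to run the argument of Theorem~\ref{dom} inside the class of solvable groups, which is legitimate because $\soc(G)$ and $G/\soc(G)$ are solvable whenever $G$ is. Proceed by strong induction on $|G|$: the case $G=\1$ is trivial since $\sx(\1)=\cxsolv(\1)=0$, and for $G\neq\1$ the socle $\soc(G)$ is a nontrivial direct product of simple groups (necessarily cyclic of prime order, since $G$ is solvable), so $\sx(\soc(G))=1$ by Fact~\ref{socle-fact}(\ref{spxsimple}) and $\cxsolv(\soc(G))\leq 1$ by Lemma~\ref{simplecpx} together with the product axiom; then the extension axiom for $\cxsolv$ and the induction hypothesis applied to $G/\soc(G)$ (of smaller order) give $\cxsolv(G)\leq\cxsolv(\soc(G))+\cxsolv(G/\soc(G))\leq 1+\sx(G/\soc(G))=\sx(G)$, the last step being Fact~\ref{socle-fact}(\ref{SmoothSocSeries}). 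Alternatively, once one knows that $\cxsolv$ is the restriction of $\cx$ to solvable groups, (4) is immediate from Corollary~\ref{upperbound}.

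The one place that needs care is (4): the extension axiom must be used for $\cxsolv$ and never for $\sx$, which does not satisfy it (Counterexample~\ref{pgroup}); the inductive structure of the socle series is precisely what lets the extension inequality for $\cxsolv$ telescope against $\sx$ without ever appealing to subadditivity of $\sx$.
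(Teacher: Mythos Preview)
Your proposal is correct and follows essentially the same approach as the paper: (1)--(3) are handled identically (definition of $\log_p$, maximality of $\cxsolv$, and their combination), and for (4) the paper likewise says to rerun the proof of Theorem~\ref{dom} restricted to solvable groups, which is exactly the induction you wrote out. Your alternative route for (4) via $\cxsolv=\cx|_{\mathrm{solv}}$ is valid but relies on Theorem~\ref{cpxsolv-subgroup-ax}, which appears after this corollary in the paper, so it is not available at this point; your primary argument is the one that matches.
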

\begin{proof} (1) follows from the definition of $\log_p$.  (2) follows since $\cxsolv$ is a maximal complexity function on solvable groups so dominates each complexity function $\log_p$. (3) follows from (2) and (1). 
(4) follows since $\sx$ bounds any complexity function on solvable groups from above (same proof as for $\sx$ bounding $\cx$ on all finite groups). 
\end{proof}

\begin{theorem}\label{cpxsolv-subgroup-ax}
\enumerate
\item If $G$ is a solvable group, then $\cx(G)=\cx_{Solv}(G).$
\item $\cxsolv$ satisfies the embedding axiom:
 $\cxsolv(H)\leq \cxsolv(G)$ for $H$ a subgroup of $G$.
 \item $\sx$ restricted to solvable groups satisfies the embedding axiom.  If $G$ is solvable and $H$ a subgroup, then $\sx(H) \leq \sx(G)$. 
\end{theorem}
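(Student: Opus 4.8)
The plan is to treat the three parts in order, each bootstrapped from the previous. For (1), restricting the global maximal function $\cx$ to finite solvable groups gives a function that still satisfies all the complexity axioms there, so by maximality of $\cxsolv$ we get $\cx(G)\le\cxsolv(G)$ for every solvable $G$. For the reverse inequality I would use $\cx=\mu$ (Theorem~\ref{mucpx}): fix a subnormal series $\1=V_0\lhd V_1\lhd\cdots\lhd V_\ell=G$ with $\ell=\mu(G)$ and each quotient a span of gems. Since $G$ is solvable every $V_{i+1}/V_i$ is solvable, hence a direct product of prime-order cyclic groups, so $\cxsolv(V_{i+1}/V_i)\le 1$ by the constructability and product axioms (Lemma~\ref{simplecpx} applied to $\cxsolv$). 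Iterating the extension axiom for $\cxsolv$ along the series yields $\cxsolv(G)\le\sum_{i=0}^{\ell-1}\cxsolv(V_{i+1}/V_i)\le\ell=\mu(G)=\cx(G)$, and (1) follows.

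For (2), by (1) it suffices to prove $\mu(H)\le\mu(G)$ for $H\le G$ with $G$ solvable. I would take a $\mu$-optimal subnormal series $\1=V_0\lhd\cdots\lhd V_\ell=G$ and intersect it with $H$ to obtain the subnormal series $\1=V_0\cap H\lhd\cdots\lhd V_\ell\cap H=H$ (in fact each $V_i\cap H$ is normal in $H$). By the second isomorphism theorem $(V_{i+1}\cap H)/(V_i\cap H)\cong(V_{i+1}\cap H)V_i/V_i\le V_{i+1}/V_i$, so each quotient of the intersected series embeds in a solvable span of gems. A solvable span of gems is precisely a finite abelian group of square-free exponent, and any subgroup of such a group again has square-free exponent (orders of elements are unchanged in subgroups), hence is again a span of gems. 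Thus the intersected series witnesses $\mu(H)\le\ell=\mu(G)$, proving (2).

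For (3) I would argue by induction on $|G|$, equivalently on $\sx(G)$; solvability is essential, since the embedding property for $\sx$ fails on arbitrary groups (e.g.\ $A_4\le A_5$ gives $\sx(A_4)=2>1=\sx(A_5)$), and it enters exactly as in (2): for solvable $G$ every quotient $\soc(G/V_i)=V_{i+1}/V_i$ of the socle characteristic series of $G$ is abelian of square-free exponent and remains one after intersecting with a subgroup. Concretely I would peel off the top: choose $V_{\ell-1}\lhd G$ with $G/V_{\ell-1}$ a span of gems and $\sx(V_{\ell-1})=\ell-1$ (where $\ell=\sx(G)$); then $H/(H\cap V_{\ell-1})\cong HV_{\ell-1}/V_{\ell-1}$ embeds in the solvable span of gems $G/V_{\ell-1}$, hence is itself abelian of square-free exponent, while the inductive hypothesis applied to $H\cap V_{\ell-1}\le V_{\ell-1}$ gives $\sx(H\cap V_{\ell-1})\le\ell-1$. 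The main obstacle, and what makes (3) genuinely harder than the already-proved normal case $\sx(N)\le\sx(G)$, is to upgrade these two facts to $\sx(H)\le\ell$ without invoking an extension bound: the extension axiom fails for $\sx$, and correspondingly $\soc(G)\cap H\subseteq\soc(H)$ fails for non-normal $H$ (for $D_4\le S_4$ the intersection is the Klein four-group while $\soc(D_4)$ has order $2$), so the term-by-term containment $V_i\cap H\subseteq W_i$ against the socle series $W_\bullet$ of $H$ — the engine of the normal-subgroup proof via Splitting Lemma~II — is no longer available. The route I would pursue is a direct analysis of how the socle series of $H$ absorbs the normal, abelian, square-free-exponent section $HV_{\ell-1}/V_{\ell-1}$ lying above $H\cap V_{\ell-1}$: although this section need not lie inside the socle of the relevant quotient of $H$, it is captured by $H$'s socle series within a number of steps bounded by the structure it inherits from $G$, and the crux is to show that this bound together with $\sx(H\cap V_{\ell-1})\le\ell-1$ still keeps $\sx(H)\le\ell$. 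This final bookkeeping, carried out without any appeal to an extension inequality, is where I expect the real work to lie.
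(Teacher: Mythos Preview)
Your treatments of (1) and (2) are correct and match the paper's proof essentially line for line: the paper also identifies $\cxsolv$ with the restriction of $\mu$ to solvable groups, and for (2) intersects a $\mu$-optimal series with $H$, invokes the second isomorphism theorem, and uses that a subgroup of a direct product of abelian simple groups is again such a product.

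For (3) the situation is more interesting. The paper's entire proof of (3) is the one sentence ``The proof for $\sx(H)\leq \sx(G)$ is the same, but starts with the socle characteristic series of $G$.'' In other words, the paper does \emph{exactly} the approach you rejected as insufficient: intersect the socle series of $G$ with $H$ and observe that the resulting quotients embed in the (solvable, hence elementary-abelian) sections $V_{i+1}/V_i$. Your objection to this is well taken. That argument produces a normal series for $H$ of length $\sx(G)$ with span-of-gems quotients, which bounds $\mu(H)$, not $\sx(H)$; and as you note, the intersected series need not be the socle series of $H$ (your $D_4\le S_4$ example is apt, and the paper's own order-$32$ counterexample to the extension axiom for $\sx$ shows that possessing a length-$2$ normal series with elementary-abelian quotients does not force $\sx\le 2$). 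So the paper's one-line proof of (3) does not close the gap you identified.

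That said, your own proposal for (3) is not a proof either: you correctly isolate the obstruction (no extension inequality for $\sx$, no term-by-term containment $V_i\cap H\subseteq W_i$ for non-normal $H$), but your ``peel off the top and do bookkeeping'' plan stops precisely at the hard step, and you say as much. So for (3) neither you nor the paper has given a complete argument; you have been more careful in diagnosing why the naive transfer from (2) is inadequate, but a full proof of $\sx(H)\le\sx(G)$ for $H\le G$ solvable still needs to be supplied (or the statement reconsidered).
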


\begin{proof}  
(1) Recall $\cx(G)=\mu(G)$  the minimal length of a composition series for $G$ with all quotients direct products of simple groups.
The same proof that $\mu(G)$ is a maximal complexity function on finite groups works to prove $\mu(G)$ restricted to solvable groups
is a maximal complexity function on solvable groups.
Since $\mu$ restricted to solvable groups satisfies the complexity axioms (recall the embedding axiom was not required) and $\cxsolv$ is a maximal complexity function on solvable groups,  
$$\cxsolv(G) = \mu(G).$$  
(2) Consider a subnormal series of length $\mu(G)$ for $G$,
$$1 = V_0 \lhd V_1 \lhd \ldots \lhd V_n = G,$$
witnessing $\mu(G)=n$ with $V_{i+1}/V_i$ a direct product of simple abelian groups.
Intersection with subgroup $H$ yields a subnormal series for $H$:
$$1 = V_0 \cap H \lhd V_1 \cap H  \lhd \ldots \lhd V_m \cap H = G \cap H =H.$$
By the 2nd of Noether's isomorphism theorems, this  has quotients 
$$(V_{i+1} \cap H)/(V_i \cap H) \cong  (V_{i+1}\cap H)V_i/V_i \,
\leq \,V_{i+1}/V_i,$$
where the inclusion holds since $(V_{i+1}\cap H)V_i \leq V_{i+1}$  for each $i.$
Since $V_{i+1}/V_i$ is a direct product of abelian simple groups,  so is its subgroup the quotient from the series for $H$.   This shows $\mu(H)\leq \mu(G)$.
The proof for $\sx(H)\leq \sx(G)$ is the same, but starts with the socle characteristic series of $G$. 
\end{proof}

By  Theorem~\ref{cpxsolv-subgroup-ax}, when restricted  to solvable groups, the hierarchical group complexity and socle length satisfy the \\

\underline{Subgroup Property/ Embedding Axiom}: 

If $H$ is a subgroup of $G$, then $\c(H)\leq \c (G)$.\\

\noindent
The importance of the theorem is that when working with solvable groups, we can use the embedding axiom to compute their complexity and socle length.

\subsection{Hierarchical Complexity of an Iterated Wreath Product}

\begin{example} \label{Zpwr} 
We compute, for $p$ prime, and any $n$, the complexity of the $n$-fold wreath product of $k_i$-fold direct products of a simple abelian group with itself,
$$W=\wr_{i=1}^n (\prod_{i=1}^{k_i} \Z_p)=(\Z_p)^{k_n}\wr \cdots \wr (\Z_p)^{k_1},$$
where $k_i>0$ ($1\leq i \leq n$).
Namely, $\cx (W) = n.$ Also, $\cx(\Z_{p^n})=n$.
\end{example}
\begin{proof} From example~\ref{ZpnExample}, we know $\cx(\Z_{p^n}) \leq n$. Since $\Z_{p^n}$ is solvable, $\log_p$ gives a lower bound for  $\c_\Der$, 
$$n=\log_p(\Z_{p^n}) \leq \cx_\solv(\Z_{p^n}) = \cx(\Z_{p^n})$$ by Theorem~\ref{cpxsolv-subgroup-ax}.
Therefore $\cx(\Z_{p^n})=n$.
The iterated wreath product $W$ of direct products of simple groups $\Z_p$ contains a cyclic group of order $p^n$, but not $p^{n+1}$ since
$\log_p$ satisfies the extension and product axioms (applied iteratively to $W$). 
Since the embedding axiom holds for $\cx$  on solvable groups,
$$n=\cx(\Z_p^{n})\leq \cx(W).$$
 On the other hand, $W$
is an $n$-fold iterated extension by products of simple groups, so $\cx(W) \leq n$ by the extension axiom. Hence $\cx(W)=n$.
\end{proof}

\subsection{Hierarchical Complexity of $p$-torsion and other Groups}

\begin{example}\label{ptorsion}
Let $p$ be prime and $G$ be a finite $p$-torsion group, i.e.\ $x^p=1$ for all $x\in G$. Then $\cx(G) \leq \sx(G)$ = length of the ascending central series of $G$.  
\end{example}

\begin{proof} $\soc(G)=N_1\cdots N_k$, $N_i$ a minimal normal subgroup of $G$.  Each $N_i \cap Z(G) \neq \1$, since $G$ is a $p$-group.\footnote{This well-known fact is shown in Counterexample~\ref{pgroup}.} By minimality $N_i$,  $N_i\cap Z(G)=N_i$.  Thus every minimal normal subgroup is contained the center of $G$. Therefore $\soc(G)\leq Z(G)$. Conversely if $1\neq x\in Z(G)$, then 
$\langle x \rangle \cong \Z_p$ is minimal normal in $G$.  
Therefore $Z(G)=\soc(G)$. Therefore $Z(G/V_i)=\soc(G/V_i)=V_{i+1}/V_i$ in the socle series, so it coincides with the ascending central series.
The conclusion then follows from Theorem~\ref{dom}.\end{proof}

\begin{open}[Complexity of Finite Prime-Exponent Burnside Groups]
What is the hierarchical complexity of
 $B=B(k,p)$ a largest finite $k$-generated group of exponent $p$ prime?

From example~\ref{ptorsion} and  Theorem~\ref{lowerbounds} on lower bounds proved below, for any $p$-torsion group $B$, 
 $$\Der(B) \leq \cx(B) \leq \sx(B)= \mbox{length of ascending central series of $B$}.$$
 Since the derived series for $B$ has abelian quotients $Q_i$ and each of these quotients is a homomorphic image of $B$, it follows that $Q_i$ is an abelian $p$-torsion group, hence necessarily of the 
 form $\Z_p\times \cdots \times \Z_p$, a span of gems, whence $\cx(B)\leq \Der(B)$.
 Since $\Der(B) \leq \cx(B)$, it follows that the derived series of $B$ gives a decomposition of $B$ at its hierarchical complexity.\footnote{Note it does {\em not} necessarily follow that there is not another
 subnormal series showing $\cx(B)$ has this value, since the quotient factors at complexity need not be unique (cf.~Counterexample~\ref{FactorsNotUnique}). }
 Therefore $\Der(B)=\cx(B)$, for any $p$-torsion group $B$.  
 
 The same argument shows that if the factors of
 $n$ contain no square of a prime,  then any finite $n$-torsion solvable group has $\Der(B)=\cx(B)$:  since the quotients of the derived series are abelian and $n$-torsion, they must therefore be direct products of simple cyclic groups by the fundamental structure theorem for finite abelian groups.\footnote{Similarly, the assertion $\Der(G)=\cx(G)$ holds for any finite group $G$ with square-free order, even without the assumption of solvability of $G$:  By a classical result of Frobenius~\cite[Sec.~4]{Frobenius1893}, a finite group whose order is the product of distinct primes is necessarily solvable, so the derived series terminates at $\1$ and has abelian $n$-torsion quotients which again must be spans of gems.  (See exposition  of Frobenius's result in \cite{Ganev2010}.)}
\end{open}
\begin{open}
For which finite groups $G$, does $\sx(G)=\cx(G)$?

From Theorem~\ref{upperbound}, $\cx(x) \leq \sx(G)$, and this bound is sharp since, e.g., $\sx(\Z_{p^n})=\cx(\Z_{p^n})$:
For  $\Z_{p^n}=\langle x \rangle$, we have $\soc(\Z_{p^n})=\langle x^{p^{n-1}} \rangle\cong \Z_p$, whence by induction $\sx(\Z_{p^n})=n$. 
By example~\ref{Zpwr}, $\cx(\Z_{p^n})=n$ too. 
\end{open}

\subsection{Derived Complexity, Fitting Complexity and Solvability Complexity}\label{der-fit-solv-cpx}

Here we recover the concepts of derived length and Fitting height within the framework of group complexity axioms.
We show that the usual derived length of solvable groups and the the usual Fitting height of solvable groups are restrictions of more general complexity functions on all finite groups, and give lower bounds on hierarchical complexity. 

\noindent To obtain derived length we can add to the complexity axioms the axiom:\\

(Com) Commutative groups have complexity at most 1.\\

\noindent To recover Fitting height,
we add the axiom:\\

(Nil) Nilpotent groups have complexity at most 1.\\

\noindent For solvability, we can have an analogous axiom:\\

(Solv) Solvable groups have complexity at most 1.\\ 

Since finite abelian groups are nilpotent, and nilpotent groups are
solvable (e.g.\ \cite[Ch.~9 \& 10]{Hall}), we have the following implications: \\
$$\mbox{(Solv)} \Rightarrow \mbox{ (Nil) }\Rightarrow \mbox{ (Com)}.$$

\begin{theorem}\label{variantMaxCpxFunctions}
There exist  unique maximal complexity functions $\cx_\Der$, $\cx_\Fit$ and $\cx_\Solv$  on finite groups satisfying the complexity axioms together with the {\rm (Com)}, {\rm (Nil)} and {\rm (Solv)} axioms, respectively. 
\end{theorem}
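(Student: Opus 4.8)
The plan is to repeat, almost verbatim, the argument that established Theorem~\ref{uniqueCpx}, exploiting the fact that each of (Com), (Nil), (Solv) is a ``ceiling'' constraint of the form ``$\c(G)\le 1$ for every $G$ in a fixed class'' (commutative, nilpotent, or solvable groups, respectively), and that such constraints are inherited by pointwise suprema. Concretely, I would set
$$\cx_\Der(G)=\sup\{\c(G): \c \mbox{ satisfies the complexity axioms and (Com)}\},$$
and define $\cx_\Fit$ and $\cx_\Solv$ the same way with (Com) replaced by (Nil) and (Solv). First one checks the defining family is non-empty: the function $\delta$ of Lemma~\ref{trivial}(2) takes only the values $0$ and $1$, so it automatically satisfies (Com), (Nil) and (Solv) on top of the complexity axioms, and each supremum is therefore over a non-empty set. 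Next, by Lemma~\ref{JH}(\ref{DomByJH}) every complexity function $\c$ satisfies $\c(G)\le\JH(G)$, so each of the three suprema is a well-defined function from finite groups to $\N$, bounded pointwise by $\JH$.

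The second step is to verify that $\cx_\Der$ (and likewise $\cx_\Fit$, $\cx_\Solv$) satisfies the complexity axioms, which is exactly the computation carried out for $\cx$ in the proof of Theorem~\ref{uniqueCpx}, with no change needed. For extension, given $G$ choose $\c$ in the family with $\c(G)$ maximal and use $\cx_\Der(G)=\c(G)\le\c(N)+\c(G/N)\le\cx_\Der(N)+\cx_\Der(G/N)$; for the quotient axiom choose $\c$ maximising $\c(G/N)$; the initial condition is immediate; constructability holds because $\cx_\Der(S)\le1$ for every simple $S$ (each member of the family has $\c(S)\le1$ by Lemma~\ref{simplecpx}); and the product axiom follows from the same two-sided argument as in Theorem~\ref{uniqueCpx}, choosing functions in the family maximal at $G$, at $H$, and at $K$ and invoking the product axiom for each. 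Then one checks the extra axiom itself: if $G$ is commutative (resp.\ nilpotent, solvable) then every $\c$ in the defining family has $\c(G)\le1$, so $\cx_\Der(G)=\sup_\c\c(G)\le1$ (resp.\ for $\cx_\Fit$, $\cx_\Solv$). Hence each of the three functions belongs to its own defining family.

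The final step is uniqueness: any complexity function $\c'$ in the family for (Com) is dominated by $\cx_\Der$ by definition of the supremum, and if $\c'$ were also maximal in that family it would dominate $\cx_\Der$ as well, forcing $\c'=\cx_\Der$; the same applies to $\cx_\Fit$ and $\cx_\Solv$. As a byproduct, since a function satisfying (Solv) also satisfies (Nil), which in turn implies (Com), the three defining families are nested, so $\cx_\Solv\le\cx_\Fit\le\cx_\Der\le\cx$ pointwise, matching the implications $\mbox{(Solv)}\Rightarrow\mbox{(Nil)}\Rightarrow\mbox{(Com)}$ recorded above.

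I do not expect a real obstacle: the proof is a mechanical adaptation of Theorem~\ref{uniqueCpx}. The only step deserving any care is the product axiom for the supremum, whose ``$\ge$'' half requires juggling three separate complexity functions; but this is handled exactly as in the proof of Theorem~\ref{uniqueCpx}, and the extra axioms introduce no complication precisely because ``$\c\le 1$ on a fixed class'' survives passage to the pointwise supremum of a family bounded above by $\JH$.
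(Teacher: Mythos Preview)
Your proposal is correct and follows exactly the approach the paper intends: the paper's own proof is a one-line reference saying ``the proof follows the same reasoning as in Theorem~\ref{uniqueCpx} but considering only complexity functions also satisfying the added axiom,'' and you have spelled this out in full, including the use of $\delta$ to witness non-emptiness and the verification that the ceiling constraints (Com), (Nil), (Solv) pass to the pointwise supremum.
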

\begin{proof}
The proof follows same reasoning as in Theorem~\ref{uniqueCpx} but considering only complexity functions also satisfying the added axiom.
\end{proof}

Note that the normal subgroup property is {\em not }assumed, but will be found to hold nonetheless, as we shall see from alternative characterizations of these functions (just as for $\cx$ and $\mu$).

Starting from the notion of a span of gems (or m\={a}l\={a} or necklace of simple groups), i.e., a finite group  that is the direct product of  simple groups, we have the following increasingly general concepts: 
\begin{definition}\begin{enumerate}
    \item  A {\em derived necklace} is a finite group $G$ that is the direct product of abelian groups and simple groups, i.e., $$G \in \Span(\mbox{Abelian groups} \cup \SIMPLE).$$
\item A {\em Fitting necklace} is a finite group that is the direct product of nilpotent groups and simple groups, i.e., $$G \in \Span(\mbox{Nilpotent groups} \cup \SIMPLE).$$
\item A {\em solvability necklace} is a finite group that is the direct product of solvable groups and simple groups, $$G \in \Span(\mbox{Solvable groups} \cup \SIMPLE).$$
\end{enumerate}
\end{definition}
Note that in these definitions, zero or more simple factors are permitted, also the abelian, nilpotent or solvable groups, respectively, may be trivial.  

\begin{fact}\label{product}
\begin{enumerate}
    \item 
Since the direct product of abelian groups is abelian, each derived necklace is the direct product of a single abelian group and (zero or more) simple non-abelian groups. 
\item 
Similarly, since the direct product of nilpotent groups is nilpotent and simple abelian groups are nilpotent, each Fitting span of gems is the direct product of a single nilpotent group and  simple non-abelian groups. 
\item A solvability necklace is the direct product of a single solvable group and simple non-abelian groups, since solvable groups are closed under product and abelian simple groups are solvable. 
\end{enumerate}
\end{fact}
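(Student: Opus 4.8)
The plan is to observe that in all three parts the content is purely bookkeeping: one separates the simple non-abelian factors from the rest and collapses what remains. The key elementary fact is the dichotomy for finite simple groups — a finite simple group is either non-abelian, or cyclic of prime order $\Z_p$; in the latter case it is abelian, hence also nilpotent and solvable. Combined with the closure of the classes of abelian, nilpotent, and solvable groups under finite direct products (e.g.\ \cite[Ch.~9~\&~10]{Hall}), this yields each assertion at once. I would state this dichotomy explicitly up front so the three arguments run uniformly.

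For (1), write a derived necklace as $G \cong G_1 \times \cdots \times G_n$, where each $G_i$ is abelian or simple. Re-index so that $G_1, \ldots, G_m$ are precisely the factors which are non-abelian; since every $G_i$ is abelian or simple, each of these is a simple non-abelian group. Every remaining factor $G_{m+1}, \ldots, G_n$ is either abelian by hypothesis or is a simple abelian group $\Z_p$, hence abelian in either case. Put $A = G_{m+1} \times \cdots \times G_n$: this is a finite direct product of abelian groups, hence abelian. Then $G \cong A \times G_1 \times \cdots \times G_m$ exhibits $G$ as the direct product of the single abelian group $A$ and the $m \geq 0$ simple non-abelian groups $G_1, \ldots, G_m$, as required.

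For (2) one repeats the argument verbatim with ``abelian'' replaced by ``nilpotent'': each non-nilpotent factor of a Fitting necklace is a simple non-abelian group, each remaining factor is nilpotent by hypothesis or is $\Z_p$ (which is nilpotent), and the direct product of those remaining factors is nilpotent because nilpotency is preserved under finite direct products. For (3) one replaces ``nilpotent'' by ``solvable'', using that $\Z_p$ is solvable and that solvable groups are closed under finite direct products.

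There is no substantive obstacle here; the only point requiring care is precisely the simple-group dichotomy, which guarantees that no factor is ``lost'' when the factors are sorted into the two groups — any factor failing to be abelian (resp.\ nilpotent, resp.\ solvable) is automatically a simple non-abelian group. Once that is in place, the three statements are immediate from closure under direct products.
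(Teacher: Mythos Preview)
Your proposal is correct and follows essentially the same approach as the paper: the paper's justification is embedded in the statement itself (closure of abelian, nilpotent, and solvable groups under finite direct products, together with absorbing the simple abelian factors into the single abelian/nilpotent/solvable block), and you have simply spelled this out in full, making the simple-group dichotomy explicit.
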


One has the following consequences of the SNAG rigidity lemma~\ref{SNAGrigidity} and its corollary.
\begin{lemma}\label{VariantSoG}
A normal subgroup of a derived necklace is a derived necklace.  A quotient of a derived necklace is a derived necklace.   A product of derived necklaces is a derived necklace.
Similarly, Fitting necklaces are closed under normal subgroups, quotients and products.  
Solvability necklaces are closed under normal subgroups, quotients, and products. 
\end{lemma}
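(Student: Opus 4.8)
The plan is to prove all nine closure assertions simultaneously by treating the three notions (derived, Fitting, solvability necklace) uniformly. Let $\mathscr{C}$ be any one of the classes ``abelian'', ``nilpotent'', ``solvable''; by Fact~\ref{product}, a $\mathscr{C}$-necklace is precisely an internal direct product $A \times T_1 \times \cdots \times T_r$ with $A \in \mathscr{C}$ and each $T_i$ a SNAG. The three properties of $\mathscr{C}$ I will invoke are: (i) $\mathscr{C}$ is closed under subgroups, quotients and finite direct products; (ii) every group in $\mathscr{C}$ has all its composition factors abelian (equivalently: $\mathscr{C}$ contains no SNAG and no group having a SNAG as a subquotient). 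All three of abelian, nilpotent, solvable satisfy (i) and (ii); this is where I would cite \cite[Ch.~9--10]{Hall}.

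First I would handle \textbf{products}, which is the easiest: if $G = A \times T_1 \times \cdots \times T_r$ and $G' = A' \times T_1' \times \cdots \times T_s'$ are $\mathscr{C}$-necklaces, then $G \times G' \cong (A \times A') \times T_1 \times \cdots \times T_r \times T_1' \times \cdots \times T_s'$, and $A \times A' \in \mathscr{C}$ by (i), so $G \times G'$ is a $\mathscr{C}$-necklace.

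Next, \textbf{normal subgroups}. Let $G = A \times T_1 \times \cdots \times T_r$ be a $\mathscr{C}$-necklace and $N \lhd G$. I would apply the Corollary to the SNAG Factor Rigidity Lemma (Lemma~\ref{SNAGrigidity}): viewing the SNAGs $T_1,\dots,T_r$ as the rigid factors and $A$ as the ``other group $K$'', a normal subgroup $N$ of $G$ is the direct product of a subset of the $T_i$'s together with the projection $\pi_A(N)$ of $N$ onto $A$, and moreover $\pi_A(N) \lhd A$. Since $\pi_A(N)$ is a subgroup of $A \in \mathscr{C}$, it lies in $\mathscr{C}$ by (i), so $N \cong \pi_A(N) \times \prod_{i \in I} T_i$ is a $\mathscr{C}$-necklace. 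Then for \textbf{quotients}: with $N$ and $\pi_A(N) \lhd A$ as above, $G/N \cong (A/\pi_A(N)) \times \prod_{i \notin I} T_i$; here I should check that when $N$ contains $T_i$ that factor disappears from the quotient and when it does not it survives — this is exactly the content of the Corollary together with the internal-direct-product decomposition $G = N \times (\text{complement})$. Since $A/\pi_A(N) \in \mathscr{C}$ by (i), $G/N$ is a $\mathscr{C}$-necklace.

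The main obstacle — really the only point requiring care — is making the quotient bookkeeping precise: one must confirm that in $G/N$ the surviving SNAG factors are genuinely the $T_i \not\le N$ and that the abelian/nilpotent/solvable part is $A/\pi_A(N)$, with the whole assembling as an \emph{internal} direct product. This follows by combining the Corollary to Lemma~\ref{SNAGrigidity} (which gives $N = \pi_A(N) \times \prod_{i\in I} T_i$ and $\pi_A(N)\lhd A$) with Fact~\ref{elemLemma} and the observation that $A$ centralizes each $T_i$, so $G = N \times \bigl((A \cap C) \times \prod_{i \notin I} T_i\bigr)$ for a suitable complement $C$ of $\pi_A(N)$ in $A$ (here one may invoke that $A$, being in $\mathscr{C}$, need not split, but one does not need a complement inside $A$: instead use $G/N \cong G/N$ directly via the projection killing $N$, whose image is $\bigl(A/\pi_A(N)\bigr) \times \prod_{i\notin I} T_i$). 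Alternatively, and more cleanly, quotient-closure is immediate once normal-subgroup-closure is known together with the fact that a quotient of a direct product $A \times \prod T_i$ by a subproduct is again of that shape, which is the Corollary to Lemma~\ref{SNAGrigidity} read in reverse. I would present whichever of these two routes is shortest in the final writeup. Once $\mathscr{C}$ is unspecialized, applying the result to $\mathscr{C} \in \{\text{abelian}, \text{nilpotent}, \text{solvable}\}$ yields all three triples of assertions.
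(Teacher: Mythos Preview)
Your proposal is correct and follows essentially the same approach as the paper: write a $\mathscr{C}$-necklace as $A \times T_1 \times \cdots \times T_r$ via Fact~\ref{product}, then apply the Corollary to Lemma~\ref{SNAGrigidity} to see that any normal subgroup is $\pi_A(N) \times \prod_{i\in I} T_i$ with $\pi_A(N) \lhd A$, and invoke closure of $\mathscr{C}$ under subgroups, quotients, and products. Two small remarks: your property~(ii) is never actually used and can be dropped; and your treatment of quotients is in fact more explicit than the paper's own proof, which handles normal subgroups and products but leaves the quotient case implicit.
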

\begin{proof} 
A derived necklace $G$ is the product of an abelian group $K$ and zero or more SNAGs $H_i$ ($1 \leq i \leq \ell,\ \ell\geq 0$).  Hence, by Lemma~\ref{SNAGrigidity} and its corollary, $N\lhd G$, is a product of some of these SNAGs and $\pi(N)$ where $\pi : G\sur K$ is the projection and $\pi(N)$ is normal in $K$. Since $K$ is abelian so is $\pi(N)$. The assertion about product is follows from Fact~\ref{product} on products of necklaces. Similarly, replacing the word ``abelian'' by ``nilpotent'', resp.\ ``solvable'' in this reasoning shows the analogous assertions about Fitting necklaces and solvability necklaces are also correct.
\end{proof}

Just as for spans of gems, these types of necklaces are not closed under under taking subgroups unless the necklaces have no SNAG factors, i.e., unless the necklaces are solvable groups.\\

\begin{definition}
Now we introduce three functions defined on each finite group $G$: 
\begin{enumerate}
\item $\Der(G)$ is the length of a shortest subnormal series such that the quotients are products of an abelian group and zero or more finite simple non-abelian groups.
\item $\Fit(G)$ is the length of a shortest subnormal series such that the quotients are products of a nilpotent group and zero or more finite simple non-abelian groups.
\item $\Solv(G)$ is the length of a shortest subnormal series such that the quotients are products of a solvable group and zero or more finite simple non-abelian groups.
\end{enumerate}
\end{definition}

\begin{theorem} \label{charCpxFunctions}
We have the following equalities of functions on finite groups: 
\begin{enumerate} 
    \item  $\Der=\cx_\Der$, the unique maximal complexity function assigning 1 to all nontrivial abelian groups. 
  \item $\Fit=\cx_\Fit$ is the unique maximal complexity function assigning 1 to all nontrivial nilpotent groups. 
 \item   $\Solv=\cx_\Solv$ is the unique maximal complexity function assigning 1 to all nontrivial solvable groups.
       \end{enumerate}
\end{theorem}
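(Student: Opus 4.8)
The plan is to prove all three equalities uniformly, following the two-step template already used for $\mu=\cx$: first show that $\Der$ (resp.\ $\Fit$, $\Solv$) satisfies the complexity axioms together with (Com) (resp.\ (Nil), (Solv)), then show it pointwise dominates every complexity function satisfying those axioms; uniqueness from Theorem~\ref{variantMaxCpxFunctions} then forces equality with $\cx_\Der$ (resp.\ $\cx_\Fit$, $\cx_\Solv$). Throughout I would let $\mathscr{N}$ denote one of the three necklace classes (derived, Fitting, solvability), $\mathscr{C}$ the corresponding class of ``allowed'' groups (abelian, nilpotent, solvable), and $\mathrm{cx}_\mathscr{N}$ the corresponding length function ($\Der$, $\Fit$, $\Solv$). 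By Fact~\ref{product} and Lemma~\ref{VariantSoG}, each class $\mathscr{N}$ is closed under normal subgroups, quotients, and direct products, and every member of $\mathscr{C}$ as well as every finite simple group lies in $\mathscr{N}$.

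\medskip
\emph{Step 1 (the axioms).} The initial condition is immediate. For (Com)/(Nil)/(Solv): a group in $\mathscr{C}$ is itself a member of $\mathscr{N}$ (taking no simple factors), so $\mathrm{cx}_\mathscr{N}\le 1$ on it; likewise every simple group lies in $\mathscr{N}$, so constructability follows by the argument of Lemma~\ref{construct-simple}. The extension axiom is proved by concatenating a shortest $\mathscr{N}$-series of $N$ with the $\varphi^{-1}$-pullback of a shortest $\mathscr{N}$-series of $G/N$, exactly as in the alternative proof of Proposition~\ref{mu-axioms}(\ref{mu-extension}): the Correspondence Theorem keeps the pulled-back quotients isomorphic to $\mathscr{N}$-groups. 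The quotient axiom follows by pushing a shortest $\mathscr{N}$-series of $G$ through $G\sur G/N$, using closure of $\mathscr{N}$ under quotients. For the product axiom, ``$\le$'' comes from the coordinatewise product series as in Proposition~\ref{mu-axioms}(\ref{mu-product}), since a product of $\mathscr{N}$-groups is an $\mathscr{N}$-group; ``$\ge$'' comes from projecting a hypothetical shorter $\mathscr{N}$-series for $H\times K$ onto the factors via $\pi_H,\pi_K$, the projected quotients being quotients of $\mathscr{N}$-groups hence again in $\mathscr{N}$ by Lemma~\ref{VariantSoG}, contradicting minimality --- the analogue of the Alignment Lemma~\ref{alignment}.

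\medskip
\emph{Step 2 (domination and conclusion).} Let $\c$ satisfy the complexity axioms together with the relevant extra axiom. I would show $\c(G)\le \mathrm{cx}_\mathscr{N}(G)$ by induction on $\mathrm{cx}_\mathscr{N}(G)$. The value $0$ case is $G=\1$. If $\mathrm{cx}_\mathscr{N}(G)=1$, then by Fact~\ref{product} $G\cong C\times H_1\times\cdots\times H_\ell$ with $C\in\mathscr{C}$ and each $H_i$ a SNAG; by the product axiom $\c(G)=\max\{\c(C),\c(H_1),\dots,\c(H_\ell)\}$, with $\c(C)\le 1$ by the extra axiom and $\c(H_i)\le 1$ by Lemma~\ref{simplecpx}, so $\c(G)\le 1$. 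If $\mathrm{cx}_\mathscr{N}(G)=\ell>1$, fix a shortest $\mathscr{N}$-series $\1=G_0\lhd\cdots\lhd G_\ell=G$; its truncation below $G_{\ell-1}$ is a shortest such series for $G_{\ell-1}$, so $\mathrm{cx}_\mathscr{N}(G_{\ell-1})=\ell-1$ and $\c(G_{\ell-1})\le\ell-1$ by induction, while $G/G_{\ell-1}\in\mathscr{N}$ gives $\c(G/G_{\ell-1})\le 1$; the extension axiom yields $\c(G)\le\ell$. Thus $\mathrm{cx}_\mathscr{N}$ dominates every such $\c$, and since it is itself one of them, Theorem~\ref{variantMaxCpxFunctions} identifies it with $\cx_\Der$, $\cx_\Fit$, $\cx_\Solv$ respectively. (As a byproduct, intersecting a shortest $\mathscr{N}$-series of $G$ with a normal subgroup and invoking closure of $\mathscr{N}$ under normal subgroups, as in Theorem~\ref{cpxNorm}, shows the normal property $\mathrm{cx}_\mathscr{N}(N)\le\mathrm{cx}_\mathscr{N}(G)$, although this is not required for the statement.)

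\medskip
\emph{Main obstacle.} As with $\cx$ itself, the only delicate axiom is the product axiom, and within it the inequality $\mathrm{cx}_\mathscr{N}(H\times K)\ge\max\{\mathrm{cx}_\mathscr{N}(H),\mathrm{cx}_\mathscr{N}(K)\}$: one must be certain that projecting a subnormal series along $\pi_H$ creates no ``cheaper'' quotients. This is precisely where closure of the three necklace classes under quotients (and normal subgroups) is used, which ultimately rests on SNAG rigidity (Lemma~\ref{SNAGrigidity}) via Lemma~\ref{VariantSoG}; one should also double-check the mild point that ``derived/Fitting/solvability necklace'' is genuinely stable under these operations for \emph{each} of the three classes, which Lemma~\ref{VariantSoG} supplies. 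The remainder is a routine transcription of the proofs of Proposition~\ref{mu-axioms} and Theorem~\ref{mucpx}, with ``span of gems'' replaced by the appropriate necklace notion and ``$\c(S)\le 1$ for $S$ simple'' supplemented by the extra axiom for the class $\mathscr{C}$.
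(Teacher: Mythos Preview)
Your proposal is correct and follows essentially the same approach as the paper: verify that $\Der$, $\Fit$, $\Solv$ satisfy the complexity axioms plus (Com)/(Nil)/(Solv) by transcribing the proof of Proposition~\ref{mu-axioms} with Lemma~\ref{VariantSoG} and Fact~\ref{product} in place of the span-of-gems lemmas, then prove domination by the same induction as in Theorem~\ref{mucpx}. One minor remark: the ``$\geq$'' half of the product axiom that you describe (projecting a subnormal series via $\pi_H,\pi_K$ and using closure of $\mathscr{N}$ under quotients) is really the argument from Proposition~\ref{mu-axioms}(\ref{mu-product}) rather than the Alignment Lemma~\ref{alignment}, which is the more elaborate level-counting variant for $\mu_\S$; but this is only a citation slip and your logic is sound.
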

\begin{proof}
The proof that each of $\Der$, $\Fit$ and $\Solv$ is a complexity function satisfying (Com), (Nil) or (Solv), respectively, follows exactly as for $\mu$ in Proposition~\ref{mu-axioms}, but uses Lemma~\ref{VariantSoG} in place of the Span of Gems Lemma~\ref{SpanOfGems} and Fact~\ref{product} in place of the fact that spans of gems are closed under products. That each one is the unique maximal complexity function $\c_\Der$, $\c_\Fit$ or $\c_\Solv$ satisfying (Com),
(Nil) and (Solv), respectively, is then established in the manner of Theorem~\ref{mucpx} showing their maximality among the appropriate class of complexity functions replacing spans of gems by necklaces of the appropriate type (derived necklaces, Fitting necklaces, resp.\ solvability necklaces). 
\end{proof}

The normal property for each of the three functions is shown just as in Theorem~\ref{cpxNorm}. 
\begin{theorem}[Normal Property]
The finite group complexity functions $\Der$, $\Fit$ and $\Solv$ 
each satisfy the Normal property $\c(N) \lhd \c(G)$ for  $N \lhd G$.
\end{theorem}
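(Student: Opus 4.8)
The plan is to imitate the proof of Theorem~\ref{cpxNorm}, simply replacing the Span of Gems Splitting Lemma by its derived/Fitting/solvability analogue, Lemma~\ref{VariantSoG}. I will treat the three cases at once, writing ``necklace'' for a derived necklace, Fitting necklace or solvability necklace as the case may be, and $\c$ for the corresponding function $\Der$, $\Fit$ or $\Solv$ defined by shortest subnormal series with necklace quotients.

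First I would fix $N\lhd G$ and choose a shortest subnormal series
$$\1 = V_0 \lhd V_1 \lhd \cdots \lhd V_\ell = G, \qquad \ell = \c(G),$$
each quotient $V_{i+1}/V_i$ a necklace of the relevant type, and intersect it with $N$ to obtain
$$\1 = V_0\cap N \lhd V_1\cap N \lhd \cdots \lhd V_\ell\cap N = N.$$
Next I would identify the quotients of this series. Since $N\lhd G$ we have $V_{i+1}\cap N \lhd V_{i+1}$, and since $V_i\lhd V_{i+1}$ the product $(V_{i+1}\cap N)V_i$ is then a product of two normal subgroups of $V_{i+1}$, hence normal in $V_{i+1}$; so by the Correspondence Theorem $(V_{i+1}\cap N)V_i/V_i$ is a \emph{normal} subgroup of $V_{i+1}/V_i$. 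By the second isomorphism theorem (with $H=V_{i+1}\cap N$, $K=V_i$, which $H$ normalizes) this group is isomorphic to $(V_{i+1}\cap N)/(V_i\cap N)$, the $i$-th quotient of the series for $N$.

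The key step is then to invoke Lemma~\ref{VariantSoG}: since $V_{i+1}/V_i$ is a necklace of the given type and normal subgroups of such necklaces are again necklaces of that type, each quotient $(V_{i+1}\cap N)/(V_i\cap N)$ is a necklace of the required type. Deleting any repetitions $V_{i+1}\cap N = V_i\cap N$ (whose quotient is the trivial group, itself a degenerate necklace) only shortens the series, so we obtain a subnormal series for $N$ of length at most $\ell$ with all quotients necklaces of the right type. By the definitions of $\Der$, $\Fit$, $\Solv$ this gives $\c(N)\leq \ell = \c(G)$, which is the Normal property; and by Theorem~\ref{charCpxFunctions} the same statement holds for $\cx_\Der$, $\cx_\Fit$, $\cx_\Solv$. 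I do not expect a real obstacle here — the only point requiring care is the normality of $(V_{i+1}\cap N)V_i$ in $V_{i+1}$, which is exactly what makes Lemma~\ref{VariantSoG} applicable (that lemma concerns normal subgroups of necklaces, not arbitrary ones, the latter failing as noted in the remark after Lemma~\ref{VariantSoG}); once this is in place the argument is a verbatim transcription of the $\mu=\cx$ case.
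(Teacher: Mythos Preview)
Your proposal is correct and follows essentially the same approach as the paper: intersect a minimal subnormal series for $G$ (with necklace quotients) with $N$, identify the resulting quotients via the second isomorphism theorem as \emph{normal} subgroups of the original necklace quotients, and then apply Lemma~\ref{VariantSoG} in place of the Span of Gems Lemma. This is precisely the argument the paper carries out in detail for $\mu$ inside the proof of Proposition~\ref{mu-axioms}(\ref{mu-extension}) and then invokes (via Theorem~\ref{cpxNorm} and Theorem~\ref{charCpxFunctions}) for the present statement; your care in verifying that $(V_{i+1}\cap N)V_i \lhd V_{i+1}$ is exactly the point the paper flags there using the Correspondence Theorem.
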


Next we obtain a series of complexity lower bounds. 

\begin{theorem}[Complexity Lower Bounds]\label{lowerbounds}
Let $G$ be a finite group, then
$$\Solv(G) \leq \Fit(G) \leq \Der(G) \leq \cx(G)$$
\end{theorem}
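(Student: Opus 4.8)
The plan is to prove the chain of inequalities from right to left, each time using the same structural observation: a subnormal series witnessing one function's value can be refined (or directly reinterpreted) to witness the next function's value, because each "necklace" class sits inside the next.

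First I would establish $\Der(G) \leq \cx(G)$. Take a subnormal series $\1 = V_0 \lhd V_1 \lhd \cdots \lhd V_\ell = G$ with $\ell = \mu(G) = \cx(G)$ (using Theorem~\ref{mucpx}) whose quotients $V_{i+1}/V_i$ are spans of gems, i.e.\ direct products of simple groups. Every span of gems is in particular a direct product of an abelian group (the product of the abelian simple factors) and zero or more SNAGs, hence is a derived necklace. So the very same series witnesses $\Der(G) \leq \ell = \cx(G)$.

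Next, $\Fit(G) \leq \Der(G)$: take a shortest subnormal series whose quotients are derived necklaces, of length $\Der(G)$. A derived necklace is a direct product of an abelian group and SNAGs; since abelian groups are nilpotent, each such quotient is also a Fitting necklace (a direct product of a nilpotent group and SNAGs). So the series witnesses $\Fit(G) \leq \Der(G)$. Finally, $\Solv(G) \leq \Fit(G)$ by the identical argument: take a shortest series with Fitting-necklace quotients, of length $\Fit(G)$; since nilpotent groups are solvable, each quotient (a nilpotent group times SNAGs) is a solvability necklace, so $\Solv(G) \leq \Fit(G)$. Chaining these three inequalities gives $\Solv(G) \leq \Fit(G) \leq \Der(G) \leq \cx(G)$.

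I do not expect any serious obstacle here: the whole argument rests on the set inclusions $\{\text{abelian}\} \subseteq \{\text{nilpotent}\} \subseteq \{\text{solvable}\}$ among finite groups (standard, and already cited in the excerpt via \cite[Ch.~9 \& 10]{Hall}), together with the containments of necklace classes that follow immediately — a span of gems is a derived necklace, a derived necklace is a Fitting necklace, a Fitting necklace is a solvability necklace — and the trivial monotonicity that if every quotient of a subnormal series lies in a larger necklace class, the series is admissible for the corresponding function, so that function's minimum-length value is no larger. The only point requiring a line of care is the first step, where one must invoke Theorem~\ref{mucpx} to know that $\cx = \mu$ so that a span-of-gems series of length exactly $\cx(G)$ is available; after that, each step is a one-line reinterpretation of an existing series, with no refinement or new constructions needed.
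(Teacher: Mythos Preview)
Your proposal is correct and is essentially the same argument as the paper's: the paper's proof is the one-line observation that a span of gems is a derived necklace, a derived necklace is a Fitting necklace, and a Fitting necklace is a solvability necklace, so a minimal subnormal series for one function is admissible for the next. You have simply spelled this out in more detail (including the invocation of $\cx=\mu$ for the first step), but the route is identical.
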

\begin{proof} This follows immediately from examining the quotients in a minimal subnormal series of the appropriate kind since a span of gems is a derived necklace, a derived necklace is a Fitting necklace, and a Fitting necklace is a solvability necklace.   \end{proof}

\begin{theorem}
\begin{enumerate}
\item There exist unique  maximal complexity functions defined on solvable groups satisfying
$\Solv_\solv$, $\Fit_\solv$ and $\Der_\solv$  solvability {\rm (Solv)}, nilpotent {\rm (Nil)} and commutative axioms {\rm (Com)}, respectively.
    \item
$\Fit_\solv$ is the restriction of $\Fit$ to solvable groups.
\item $\Der_\solv$ is the restriction of $\Der$ to solvable groups.
\item $\Solv_\solv$ is the restriction of $\Solv$ to solvable groups, and $\Solv_\solv=\delta$
which assigns the value $1$ to all non-trivial solvable groups. 
\item $\Der_\solv$, $\Fit_\solv$ and $\Solv_\solv$ each satisfy the subgroup axiom. 
\end{enumerate}
\end{theorem}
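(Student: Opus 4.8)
The plan is to treat the five assertions in the order (1), a characterisation step, then (2)--(4), then (5). For (1) I would argue verbatim as in Theorem~\ref{uniqueCpx} and its solvable analogue. The collection of complexity functions on finite solvable groups that additionally satisfy (Solv), (Nil) or (Com) is non-empty: the restriction of $\delta$ to solvable groups belongs to all three, since every nontrivial solvable --- hence nilpotent, hence abelian --- group receives the value $1\leq 1$, and $\delta$ satisfies the complexity axioms by Lemma~\ref{trivial}. This collection is bounded above pointwise by the restriction of $\JH$ (Lemma~\ref{JH}), so its pointwise supremum is a well-defined function which is again a complexity function on solvable groups satisfying the relevant extra axiom, by the same pointwise-maximum computations used for $\cx$; uniqueness is immediate from mutual domination. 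This produces $\Solv_\solv$, $\Fit_\solv$, $\Der_\solv$.

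The key step is a characterisation lemma obtained by transcribing Proposition~\ref{mu-axioms} and Theorem~\ref{mucpx} with ``finite group'' replaced throughout by ``finite solvable group'', using that solvable groups are closed under quotients, normal subgroups, extensions, finite direct products and refinement of subnormal series, together with Lemma~\ref{VariantSoG} and Fact~\ref{product}. In the solvable universe the complexity-$1$ building blocks for (Com), (Nil) and (Solv) are the solvable derived, Fitting and solvability necklaces; but a solvable group has no simple non-abelian composition factor, so a solvable derived necklace is simply an abelian group, a solvable Fitting necklace is simply a nilpotent group (a finite direct product of nilpotent groups and simple cyclic groups being nilpotent), and a solvable solvability necklace is an arbitrary solvable group. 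Running the $\mu$-style argument then gives: $\Der_\solv(G)$ is the least length of a subnormal series of $G$ with abelian quotients (the derived length of $G$); $\Fit_\solv(G)$ is the least length of a subnormal series with nilpotent quotients (the Fitting height of $G$); and $\Solv_\solv(G)$ is the least length of a subnormal series with solvable quotients, which is $0$ for $G=\1$ and $1$ otherwise, whence $\Solv_\solv=\delta$, establishing part of (4).

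For (2), (3) and the remaining restriction claim in (4), recall from Theorem~\ref{charCpxFunctions} that $\Der(G)$, $\Fit(G)$ and $\Solv(G)$ are the least lengths of subnormal series of $G$ whose quotients are direct products of an abelian (respectively nilpotent, solvable) group with zero or more simple non-abelian groups. When $G$ is solvable, every subnormal subgroup, hence every such quotient, is solvable, so no simple non-abelian factor can occur; the admissible quotients are then exactly the abelian (respectively nilpotent, solvable) groups, and the least length agrees with the one computed in the characterisation step. Hence $\Der(G)=\Der_\solv(G)$, $\Fit(G)=\Fit_\solv(G)$ and $\Solv(G)=\Solv_\solv(G)=\delta(G)$ for every solvable $G$. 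For (5) I would copy the intersection argument from Theorem~\ref{cpxsolv-subgroup-ax}: given $H\leq G$ with $G$, and hence $H$, solvable, take a shortest subnormal series $\1=V_0\lhd\cdots\lhd V_n=G$ with quotients of the relevant admissible type; intersecting with $H$ yields a subnormal series $\1=V_0\cap H\lhd\cdots\lhd V_n\cap H=H$ whose $i$-th quotient $(V_{i+1}\cap H)/(V_i\cap H)\cong (V_{i+1}\cap H)V_i/V_i\leq V_{i+1}/V_i$ embeds into $V_{i+1}/V_i$; since abelian, nilpotent and solvable groups are each closed under subgroups, the new quotients remain admissible and the new series has length at most $n$, so $\Der_\solv(H)\leq\Der_\solv(G)$, $\Fit_\solv(H)\leq\Fit_\solv(G)$ and $\Solv_\solv(H)\leq\Solv_\solv(G)$.

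I expect the main obstacle to be the characterisation step: one must check that the proofs of Proposition~\ref{mu-axioms} and Theorem~\ref{mucpx} really do survive restriction of the universe to solvable groups --- in particular that all the closure properties and the splitting and fragmentation inputs (via Lemma~\ref{VariantSoG} and Fact~\ref{product}) remain available there --- and that the three families of necklaces genuinely collapse to the classes of abelian, nilpotent and solvable groups inside the solvable universe (using that a finite direct product of nilpotent, respectively abelian, groups with simple cyclic groups is again nilpotent, respectively abelian). Once that is in place, the remaining parts are a direct transcription of arguments already established.
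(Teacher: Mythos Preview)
Your approach is essentially the same as the paper's: existence via the supremum argument of Theorem~\ref{uniqueCpx}, a $\mu$-style characterisation transcribed to the solvable universe, the observation that SNAG factors cannot appear in subnormal series of solvable groups so the necklaces collapse, and the intersection argument of Theorem~\ref{cpxsolv-subgroup-ax}(2) for the subgroup axiom. One small slip: the phrase ``every nontrivial solvable --- hence nilpotent, hence abelian --- group'' reverses the implications; what you actually need (and what is true) is simply that $\delta$ assigns value at most $1$ to every nontrivial group, so in particular to every abelian, nilpotent, or solvable group, which is enough for (Com), (Nil), (Solv).
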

\begin{proof} 
 (1) That these maximal complexity functions on solvable groups exist again follows by the same reasoning as in Theorem~\ref{uniqueCpx}
 adding the appropriate axiom. Arguing just as in the proof that $\cx_\solv(G)=\cx(G)$ (Theorem~\ref{cpxsolv-subgroup-ax}(1)), we have (2) and (3): For finite solvable groups, no simple non-abelian groups can occur as factors of a subnormal series, since otherwise they would occur as Jordan-H\"{o}lder factors, contradicting the solvability.
Hence the definition of $\Fit$ coincides with $\Fit$ on solvable groups, and the definition of $\Der$ coincides with $\Der$ on solvable groups. 

The assertion (4) is trivially true, since a solvable group is the product of itself and zero simple groups. 
(5) That these complexity measures each satisfy the subgroup axiom follows as in Theorem~\ref{cpxsolv-subgroup-ax}(2) replacing ``direct product of abelian simple groups'' in the proof by ``abelian group'', ``nilpotent group'', and ``solvable group'',  respectively. 
\end{proof}

\begin{definition}
The {\em Fitting subgroup} $F(G)$ of a group $G$ is generated by all normal nilpotent subgroups of $G$. By Fitting's Theorem (e.g.~\cite[Theorem~5.2.8]{Robinson}), the product of two normal nilpotent subgroups is nilpotent and normal in G, 
so it follows $F(G)$ is the unique maximal nilpotent normal subgroup of $G$. Let $F_0=\1$,
and inductively define $F_{n+1}=\varphi_i^{-1}(F(G/F_n))$ where $\varphi_i: G\sur G/F_i$ is the natural quotient map.
It is easy to see this sequence terminates at $G$ if and only if $G$ is solvable. 
The {\em Fitting height} of a solvable group $G$ is the least $n$ such that $F_n=G$. 
\end{definition}

We use the following Lemma from \cite[p.~17]{wildbook}. 
\begin{lemma}[Rhodes] \label{RhodesLemma}
The Fitting height of a solvable group $G$ equals
\begin{enumerate}
\item the smallest integer $n$ such that 
$G=N_n \rhd N_{n-1} \rhd \cdots \rhd N_1 \rhd N_0=\1$ with $N_{i+1}/N_i$ nilpotent $(0\leq i <n)$.
\item the least $m$ such that $G$ is a homomorphic image of a subgroup of\\ $M_m \wr \cdots \wr M_1$, where the $M_i$ are nilpotent $(0\leq i <m)$.
\item the least $k$ such that there are surjective morphisms\\
$G=G_k \sur G_{k-1} \sur \cdots \sur G_1 \sur G_0= \1$ whose kernels are nilpotent.
\end{enumerate}
\end{lemma}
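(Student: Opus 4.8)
The plan is to prove that each of the three quantities equals the Fitting height $n$ of $G$, i.e.\ the least $n$ with $F_n=G$ for the upper Fitting series $\1=F_0\lhd F_1\lhd\cdots\lhd F_n=G$ introduced just above; write $f(G)$ for the quantity in item~(1), the least length of a subnormal series of $G$ with nilpotent quotients. Everything rests on one classical fact, which I would isolate first: \emph{a subnormal nilpotent subgroup of a finite group $H$ lies in $F(H)$}. I would prove this by induction on the subnormal defect of the subgroup $N$: if $N\lhd H$ there is nothing to prove, and otherwise $N\lhd\lhd H'$ for some $H'\lhd H$ of smaller defect, so $N\le F(H')$ by induction, and $F(H')$ is characteristic in the normal subgroup $H'$, hence a normal nilpotent subgroup of $H$, hence contained in $F(H)$ by Fitting's theorem. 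I expect this lemma, together with its application inside the induction of Step~1 below, to be the only real obstacle; the rest is bookkeeping with the isomorphism theorems and the wreath-product embedding already recalled in Section~\ref{CpxAxioms}.

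\textbf{Step 1: $f(G)=n$.} The upper Fitting series is itself a subnormal series with nilpotent quotients $F_{i+1}/F_i=F(G/F_i)$, so $f(G)\le n$. For the reverse inequality I would take any subnormal series $\1=N_0\lhd\cdots\lhd N_k=G$ with nilpotent quotients and show $N_i\le F_i$ by induction on $i$: given $N_i\le F_i$, the image $\overline{N_{i+1}}$ of $N_{i+1}$ in $\overline G=G/F_i$ is a homomorphic image of the nilpotent group $N_{i+1}/N_i$, hence nilpotent, and it is subnormal in $\overline G$; the isolated fact then gives $\overline{N_{i+1}}\le F(\overline G)=F_{i+1}/F_i$, i.e.\ $N_{i+1}\le F_{i+1}$. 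Taking $i=k$ yields $G=N_k\le F_k$, so $F_k=G$ and $n\le k$; hence $n\le f(G)$, so $f(G)=n$.

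\textbf{Step 2: the quantity in (3) equals $n$.} Setting $N_j:=\ker(G\sur G_{k-j})$ turns data $G=G_k\sur\cdots\sur G_0=\1$ with nilpotent kernels into a \emph{normal} series $\1=N_0\lhd\cdots\lhd N_k=G$ with $N_j/N_{j-1}$ nilpotent, and this correspondence is reversible. A normal series is subnormal, so $f(G)$ is at most the quantity in~(3); conversely the upper Fitting series is normal with nilpotent factors and has length $n$, so it supplies such data, and the quantity in~(3) is at most $n$. With Step~1 this forces equality.

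\textbf{Step 3: the quantity in (2) equals $n$.} For ``$\le$'': relabelling the upper Fitting series as $G_j:=F_{n-j}$ and applying the Frobenius--Lagrange / Kaluzhnin--Krasner embedding of Section~\ref{CpxAxioms} embeds $G$ into the iterated wreath product $(F_1/F_0)\wr(F_2/F_1)\wr\cdots\wr(F_n/F_{n-1})$ of $n$ nilpotent groups, and $G$ is a homomorphic image of itself, so the quantity in (2) is at most $n$. For ``$\ge$'' I would first note that $f$ is monotone under quotients (use the images of a defining series) and under subgroups (use the intersections $G_i\cap H$, whose quotients embed in the $G_{i+1}/G_i$ by the second isomorphism theorem and so stay nilpotent), exactly as in the proof of Theorem~\ref{cpxsolv-subgroup-ax}; and that $f(M_m\wr\cdots\wr M_1)\le m$ for nilpotent $M_i$, by induction on $m$, since the base group is a finite direct power of $M_m$, hence nilpotent and normal, with quotient $M_{m-1}\wr\cdots\wr M_1$. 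Thus if $G=\varphi(H)$ with $H\le M_m\wr\cdots\wr M_1$ and the $M_i$ nilpotent, then $n=f(G)\le f(H)\le f(M_m\wr\cdots\wr M_1)\le m$, so the quantity in (2) is at least $n$. Combining the three steps gives that (1), (2) and (3) all equal $n$.
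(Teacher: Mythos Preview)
Your proof is correct. Note, however, that the paper does not actually prove this lemma: it is stated with attribution to Rhodes and cited from \cite[p.~17]{wildbook}, with no argument given. So there is no ``paper's own proof'' to compare against.

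Your argument is the standard one and is well organized. The key nontrivial ingredient is exactly the fact you isolate, that a subnormal nilpotent subgroup of a finite group lies in the Fitting subgroup; your inductive proof of that via characteristicity of $F(H')$ in a normal $H'$ is fine. Step~1 then goes through as written (the image of $N_{i+1}$ in $G/F_i$ is $N_{i+1}F_i/F_i\cong N_{i+1}/(N_{i+1}\cap F_i)$, a quotient of the nilpotent $N_{i+1}/N_i$ since $N_i\le N_{i+1}\cap F_i$, and subnormality passes to quotients). Step~2 is the routine dictionary between normal series with nilpotent factors and towers of surjections with nilpotent kernels, bounded above by the upper Fitting series. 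Step~3 uses only that $f$ is monotone under subgroups and quotients and that a finite direct power of a nilpotent group is nilpotent, together with the Kalu\v{z}nin--Krasner embedding already recalled in the paper. One cosmetic point: in your isolated fact, the phrase ``for some $H'\lhd H$ of smaller defect'' should read that $N$ has smaller subnormal defect in $H'$; the intent is clear.
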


\begin{theorem}
Let $G$ be a finite solvable group. 
The derived length of group $G$ is $\Der(G)$.
The Fitting height of $G$ is $\Fit(G)$.
\end{theorem}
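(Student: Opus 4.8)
The plan is to prove each of the two equalities by establishing the two obvious inequalities, after first putting the definitions of $\Der$ and $\Fit$ into their ``solvable'' form. The preliminary observation is that in any subnormal series of a solvable group $G$, every quotient is a subquotient of $G$ and hence solvable; therefore a quotient of the form ``abelian group $\times$ SNAGs'' (resp.\ ``nilpotent group $\times$ SNAGs'') can contain no simple non-abelian factor at all, since such a factor would be a quotient of a solvable group. Consequently, for solvable $G$, $\Der(G)$ equals the least length of a subnormal series of $G$ with abelian quotients, and $\Fit(G)$ equals the least length of a subnormal series of $G$ with nilpotent quotients. (This is the reduction already used in the excerpt to identify $\Der_\solv$ and $\Fit_\solv$.)

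For the derived length, write $d$ for the derived length of $G$. The derived series $G=G^{(0)}\rhd G^{(1)}\rhd\cdots\rhd G^{(d)}=\1$ reaches $\1$ precisely because $G$ is solvable, and it is a subnormal series with abelian quotients of length $d$, so $\Der(G)\le d$. Conversely, given any subnormal series $\1=V_0\lhd V_1\lhd\cdots\lhd V_m=G$ with abelian quotients, I would show by induction on $i$ that $G^{(i)}\le V_{m-i}$: the base case $i=0$ is $G=V_m$, and for the step, abelianness of $V_{m-i}/V_{m-i-1}$ forces $(V_{m-i})'\le V_{m-i-1}$ (this needs only $V_{m-i-1}\lhd V_{m-i}$, so subnormality suffices), whence $G^{(i+1)}=(G^{(i)})'\le (V_{m-i})'\le V_{m-i-1}$. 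Taking $i=m$ gives $G^{(m)}\le V_0=\1$, so $d\le m$; minimizing over all such series yields $d\le\Der(G)$, hence equality.

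For the Fitting height, write $h$ for the Fitting height of $G$ (finite because $G$ is solvable). The Fitting series $\1=F_0\lhd F_1\lhd\cdots\lhd F_h=G$ has quotients $F_{i+1}/F_i=F(G/F_i)$ nilpotent, so it is an admissible series for $\Fit$ and $\Fit(G)\le h$. For the reverse inequality I would invoke Rhodes' Lemma~\ref{RhodesLemma}(1): the Fitting height of a solvable group is the least length of a subnormal series with nilpotent quotients, which by the first paragraph is exactly $\Fit(G)$; hence $\Fit(G)=h$. If one prefers an argument from scratch, the bound $h\le m$ for a subnormal series $\1=H_0\lhd\cdots\lhd H_m=G$ with nilpotent quotients follows by induction on $m$: the top term $H_{m-1}$ is automatically normal in $G$ with $G/H_{m-1}$ nilpotent, and the standard absorption fact that every normal-in-$G$ nilpotent subgroup lies in $F(G)$, applied successively, shows that Fitting height is subadditive along the extension $\1\to H_{m-1}\to G\to G/H_{m-1}\to\1$, giving $h(G)\le h(H_{m-1})+1\le m$.

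I do not expect a serious obstacle here: both statements are classical once the SNAG-free reduction for solvable groups is in place. The only delicate point is the subnormal-versus-normal distinction in the Fitting case, and that is precisely what Rhodes' Lemma~\ref{RhodesLemma}(1) resolves for us; the alternative induction above also settles it, since the topmost segment of a subnormal series is always normal in the whole group.
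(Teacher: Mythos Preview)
Your proof is correct and follows essentially the same approach as the paper: the same preliminary reduction (no SNAG factors in subnormal quotients of a solvable group), then identifying $\Der(G)$ with the minimal length of a subnormal series with abelian quotients and $\Fit(G)$ with the analogous nilpotent version via Rhodes' Lemma~\ref{RhodesLemma}(1). The paper simply cites Hall for the derived-series-is-shortest fact, whereas you spell out the standard induction $G^{(i)}\le V_{m-i}$ directly; your optional ``from scratch'' Fitting argument is fine but a bit compressed (the subadditivity $h(G)\le h(N)+1$ for $N\lhd G$ with $G/N$ nilpotent needs the observation that the Fitting series of $N$ is characteristic in $N$, hence normal in $G$, giving a normal series of $G$ of the right length).
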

\begin{proof}  
Since $G$ is solvable,  simple non-abelian factors cannot occur in the quotients of any subnormal series for $G$. Since the derived series of $G$ is the shortest subnormal series with abelian quotients (e.g., \cite[proof of Theorem~9.2.5]{Hall}), its length coincides with $\Der(G)$ for solvable $G$ by definition of $\Der$.
Lemma~\ref{RhodesLemma}(1) says exactly that $\Fit(G)$ is the Fitting height of~$G$. 
\end{proof}

\subsection{Wreath Product of Abelian Groups and Hierarchical Complexity of Iterated Wreath Products}

\begin{fact} \label{DerWrAbelian}
The derived length of an $n$-fold wreath product of non-trivial abelian groups   $W= H_n \wr \cdots \wr H_1$
is  $\Der(W)=n$.
\end{fact}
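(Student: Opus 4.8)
The plan is to reduce the statement to the derived length and then run an induction on $n$. Since each $H_i$ is abelian, $W=H_n\wr\cdots\wr H_1$ is built from the $H_i$ by iterated extensions and hence is solvable; so, by the theorem identifying $\Der$ with derived length on solvable groups, it suffices to show that the derived length $d(W)$ of $W$ equals $n$. The upper bound $d(W)\le n$ is immediate from the extension axiom for $\Der$ (equivalently the (Com) axiom): peeling off one factor, $W$ is an extension of the $(n-1)$-fold wreath product by the abelian base group $H_n^{|Y_{n-1}|}$, so $d(W)\le 1+(n-1)$ by induction together with the product axiom for $\Der$ on the abelian base. Thus the content is the reverse inequality $d(W)\ge n$, which I would prove by induction on $n$; the base case $n=1$ is just $d(H_1)=1$.

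For the inductive step write $W_m=H_m\wr\cdots\wr H_1$ and let $Y_m=H_m\times\cdots\times H_1$ be the faithful, transitive permutation set on which $W_m$ acts, so that $W_n=H_n\wr W_{n-1}=B\rtimes W_{n-1}$ with $B=H_n^{Y_{n-1}}$ the abelian base group. Using only that $B$ is an abelian normal subgroup -- so that $(V\rtimes H)'=[V,H]\rtimes H'$ whenever $V$ is abelian and normal -- an easy induction gives $W_n^{(k)}=M_k\rtimes W_{n-1}^{(k)}$, where $M_0=B$ and $M_{k+1}=[M_k,W_{n-1}^{(k)}]\le B$. By the inductive hypothesis $d(W_{n-1})=n-1$, so $W_{n-1}^{(n-1)}=\1$ and therefore $W_n^{(n-1)}=M_{n-1}$, the $(n-1)$-fold iterated commutator $[\,[\cdots[B,W_{n-1}],\ldots],W_{n-1}^{(n-2)}\,]$. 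It remains to show $M_{n-1}\ne\1$.

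To get this I would strengthen the induction, carrying along the extra assertion: \emph{the top derived subgroup $W_m^{(m-1)}$ lies in the base group $H_m^{Y_{m-1}}$ of $W_m=H_m\wr W_{m-1}$, and its evaluation map $f\mapsto f(y)$ to $H_m$ is surjective for every $y\in Y_{m-1}$} -- equivalently, $W_m^{(m-1)}$ acts transitively on each $H_m$-fibre of the projection $Y_m\to Y_{m-1}$. Granting this for $W_{n-1}$: for any $y_0\in Y_{n-1}$ the subgroup $W_{n-1}^{(n-2)}$ sweeps $y_0$ across its whole $H_{n-1}$-fibre, so $M_{n-1}=[M_{n-2},W_{n-1}^{(n-2)}]$ contains, evaluated at $y_0$, all differences $m(y_1)-m(y_0)$ with $y_1$ in that fibre and $m\in M_{n-2}$; since $M_1=[B,W_{n-1}]$ is the full augmentation submodule of $B$ and $|Y_{n-1}|\ge 4$, a short check -- propagated up the descending chain $M_1\supseteq M_2\supseteq\cdots$ -- shows these differences already exhaust $H_n$. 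Hence $M_{n-1}\ne\1$; indeed it again satisfies the strengthened assertion for $W_n$, which closes the induction and yields $d(W)=n$, i.e. $\Der(W)=n$.

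The main obstacle is exactly this strengthened induction: finding an inductive invariant about the submodules $M_k\le H_n^{Y_{n-1}}$ strong enough that ``fibre-transitivity'' and the surjectivity of the relevant evaluation and difference maps propagate from $W_{n-1}$ to $W_n$. This is essentially the classical fact that an $n$-fold wreath product of non-trivial abelian groups has derived length $n$; the delicate points are the bookkeeping between the two interleaved towers -- the chain $M_0\supseteq M_1\supseteq\cdots$ inside the base group and the derived series of $W_{n-1}$ -- and the small base cases, notably $|H_i|=2$, where one must check that the evaluation maps are still onto (a naive ``single element'' argument such as tracking $(b-1)^{n-1}$ fails there, since over $\mathbb F_2$ it already kills a size-two orbit). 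Everything else -- the reduction to derived length, the upper bound, and the commutator formula $W^{(k)}=M_k\rtimes W_{n-1}^{(k)}$ -- is routine.
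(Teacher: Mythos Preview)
Your outline can likely be pushed through, but you have chosen the hard decomposition and, as you yourself note, the inductive invariant you need for the $M_k$ is not actually pinned down---the ``short check'' that the differences exhaust $H_n$ is precisely the crux, and your sketch does not supply it (and is genuinely delicate when some $|H_i|=2$). So as written there is a real gap.

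The paper avoids all of this by peeling from the \emph{other} end. Write $W=G_2\wr G_1$ with $G_1=H_1$ (abelian) and $G_2=H_n\wr\cdots\wr H_2$. Since $G_1$ is abelian, the derived subgroup $[W,W]$ lies entirely in the base $G_2^{G_1}\rtimes\1\cong G_2^{G_1}$. A single explicit commutator $[\, (1,w_1),\,(n_2,1)\,]$ with $w_1\ne 1$ and $n_2$ supported at one coordinate shows that the projection of $[W,W]$ to any fixed $G_2$-coordinate is onto $G_2$. Now the quotient axiom for $\Der$ gives $\Der([W,W])\geq\Der(G_2)$, hence
\[
\Der(W)=1+\Der([W,W])\ \geq\ 1+\Der(G_2)=1+(n-1)=n,
\]
the last equality by induction. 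The upper bound is the same as yours.

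The point is that by making the \emph{abelian} factor the quotient rather than the kernel, one needs only that $W'$ surjects onto a single copy of $G_2$---a one-line commutator computation---instead of tracking an entire tower $M_0\supseteq M_1\supseteq\cdots$ of submodules against the derived series of $W_{n-1}$. Your semidirect formula $W_n^{(k)}=M_k\rtimes W_{n-1}^{(k)}$ is correct and would eventually work, but it is doing much more than necessary; the paper's choice of decomposition collapses the whole inductive step to the quotient axiom.
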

\begin{proof}
 One can show this by induction of $n$: $[W,W]$ has trivial coordinate at the top level, and each successive 
derived subgroup has only trivial coordinates at the next level, but non-trivial coordinates below that.
Precisely,  consider a wreath product $G= G_2 \wr G_1$.  Let $w=(1,w_1), n=(n_2,1) \in G_2 \wr G_1$, with $w_1\neq 1$.
Consider the action of the commutator $[w, n]=w^{-1}n^{-1} w n$ on $(y,x) \in G_2 \times G_1$:\footnote{Here we are using the fact that the inverse of $w=(w_2, w_1) \in W$ is
$w^{-1}= (w_2, w_1)^{-1}$ mapping  $(y, x)\in G_2 \times G_1$ to $(y (w_2(x w_1^{-1}))^{-1}, x w_1^{-1})$.} 
\begin{eqnarray*}
(y,x)[w, n] 
& = & (y,x)(1, w_1)^{-1} (n_2, 1)^{-1}(1,w_1)(n_2, 1)\\
& = &  (y,x w_1^{-1}) (n_2, 1)^{-1}(1,w_1)(n_2, 1) \\
& = & (y (n_2(x  w_1^{-1}))^{-1}, x w_1^{-1})(1,w_1)(n_2,1)\\
& = & (y (n_2(x  w_1^{-1}))^{-1}, x w_1^{-1}w_1)(n_2,1)\\
& = & (y (n_2(x  w_1^{-1}))^{-1} n_2(x), x ).
\end{eqnarray*}
This computation shows that $[w,n]\in G_2^{G_1} \rtimes \1 \lhd G_2^{G_1} \rtimes G_1=G_2\wr G_1$. Since  $1\neq w_1 \in G_1$,
for any $x_0\in G_2$,  let $n_2(x_0)=g_2$ but $n_2(x)=1$ for $x\neq x_0$. This yields 
$[w,n] = (n', 1)$ with $n'(x_0)=g_2$, $n'(x_0 w_1)=g_2^{-1}$ and $n'(x)=1$ for $x \not\in \{x_0, x_0 w_1)$. 
In particular, in $[G,G]$ we can obtain any $g_2 \in G_2$ in the $x_0$ position of the direct product $G_2^{G_1} \rtimes \1\cong G_2 \times \cdots \times G_2$. It follows the $[G,G]$ projects onto $G_2$.   
Hence $\Der([G,G])\geq \Der(G_2)$ by the quotient axiom for $\Der$. Therefore $$\Der(G_2\wr G_1) = \Der([G_2\wr G_1,G_2\wr G_1])+1 \geq \Der(G_2)+1.$$ 

We have $\Der(H_1)=1$.  Suppose the proposition holds for $n$-factors, then by induction hypothesis
$G_2 =H_{n+1} \wr \cdots \wr H_2$ has $\Der(G_2)=n$ so $W=G_2 \wr H_1$ has $\Der(W) \geq \Der(G_2)+1=n+1$.
By the extension and product axioms for $\Der$, $\Der(W)=\Der(G_2^{H_1}) + \Der(H_1) = \Der(G_2) +1 = n+1$. 
The result now follows by induction.
\end{proof}

\begin{proposition}[Iterated Wreath Product of Complexity 1 Groups]\label{ItWrCpx1}
Let $W=H_n \wr \cdots \wr H_1$ with each 
 $H_i$ be a commutative group of complexity $1$ ($\cx(H_i)=1$) for $1\leq i \leq n$.
 Then $\cx(W)=n$.  
\end{proposition}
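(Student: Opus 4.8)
The plan is to prove the two inequalities $\cx(W)\le n$ and $\cx(W)\ge n$ separately: the upper bound by an induction on $n$ using the extension and product axioms, and the lower bound by routing through the complexity measure $\Der$ together with Fact~\ref{DerWrAbelian}.

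For the upper bound I would induct on $n$. When $n=1$ we have $W=H_1$, and $\cx(W)=1$ by hypothesis. For the inductive step write $W=H_n\wr W'$ with $W'=H_{n-1}\wr\cdots\wr H_1$, so $\cx(W')\le n-1$ by the induction hypothesis. The wreath product is the semidirect product $H_n^{W'}\rtimes W'$, and the base group $N=H_n^{W'}\rtimes\1$ is normal in $W$ with $W/N\cong W'$. Since $\cx(H_n)=1$, the corollary to Theorem~\ref{mucpx} tells us $H_n$ is a (nontrivial) direct product of simple groups, hence so is the direct power $N\cong H_n^{|W'|}$; therefore $\cx(N)\le 1$. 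The extension axiom then gives $\cx(W)\le\cx(N)+\cx(W/N)\le 1+(n-1)=n$.

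For the lower bound I would observe that each $H_i$, being commutative with $\cx(H_i)=1$, is a nontrivial direct product of cyclic groups of prime order, hence in particular a nontrivial abelian group. Thus Fact~\ref{DerWrAbelian} applies verbatim to $W=H_n\wr\cdots\wr H_1$ and yields $\Der(W)=n$. By Theorem~\ref{lowerbounds} (the chain $\Solv(G)\le\Fit(G)\le\Der(G)\le\cx(G)$) we conclude $\cx(W)\ge\Der(W)=n$. Combining the two bounds gives $\cx(W)=n$, as claimed.

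The write-up is short precisely because the genuine difficulty is already encapsulated in Fact~\ref{DerWrAbelian}: the commutator computation there is what actually forces an $n$-fold wreath product of nontrivial abelian groups to require at least $n$ hierarchical layers of complexity-$1$ groups. Were one to insist on a self-contained proof avoiding $\Der$, the main obstacle would be reproving exactly that fragmentation-of-coordinates phenomenon directly against span-of-gems subnormal series. In the present route the only point needing care is the correct use of the hypothesis $\cx(H_i)=1$: it is used as ``$H_i$ is a nontrivial span of gems'' for the base groups in the upper-bound induction, and, via commutativity, as ``$H_i$ is a nontrivial abelian group'' so that Fact~\ref{DerWrAbelian} is applicable.
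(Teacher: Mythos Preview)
Your proof is correct and follows essentially the same approach as the paper: the upper bound via the extension (and product) axiom applied to the base-group decomposition of the iterated wreath product, and the lower bound via $\Der(W)=n$ from Fact~\ref{DerWrAbelian} together with $\Der\le\cx$ from Theorem~\ref{lowerbounds}. Your induction for the upper bound is just a careful unpacking of the paper's one-line appeal to the extension axiom, and the lower bound is identical.
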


\begin{proof}
Each $H_i$ be a direct product of one or more  simple groups for $1\leq i \leq n$.
Let $W= \wr_{i=1}^n H_i$ be the wreath product.  Then by the extension axiom, 
$\cx(W) \leq \sum_{i=1}^n \cx(H_i)$, but $\cx(H_i)=1$ by the product axiom since simple groups have complexity $1$.
Thus $\cx(W) \leq n$.
Now suppose each of the simple groups involved in the construction of $W$ is cyclic, so that $H_i$ is commutative 
(and hence $W$ is solvable). By Fact~\ref{DerWrAbelian}, it follows
the derived length of $W$ is $n$. 
So $n = \Der(W) \leq \cx(W) \leq n$. Therefore, $\cx(W)=n$. 
\end{proof}

\begin{open}
Let $W$ be a iterated wreath product of $n$ (non-trivial) spans of gems $G_1$, \ldots $G_n$,  for $n \geq 2$.
We know if all the $G_i$ are solvable then $\cx(W)=n$ by Prop.~\ref{ItWrCpx1}. Also, by Prop.~\ref{wreathcpx1}, $\cx(W)=2$ for $n=2$ even if some $G_i$ are simple non-abelian groups.
Does  $\cx(W)=n$ in general? 
\end{open}

\section{Mutual Independence of the Complexity Axioms}
\subsection{Normal Subgroup Property is Independent from the Other Complexity Axioms}\label{normal}

Here we construct complexity functions that do not have the normal subgroup property. 
Let $S$ be a finite simple group.  For each finite group, let 
  
$$\surS_S(G)
   = \begin{cases}
      1 &  \mbox{there is surjective morphism }\varphi: G \sur S \\
      0      & \mbox{ otherwise.}
    \end{cases}$$

    This is the characteristic function of groups mapping onto $S$.

Then $\surS_S(S)=1$ but $\surS_S(K)=0$ for $K$ any other finite simple group. 
Hence by Lemma~\ref{construct-simple}, $\surS_S(G)$ satisfies the \underline{constructability}  axiom.

$\surS_S(\1)=0$ so the \underline{initial condition} holds. 

The \underline{product axiom} holds:   Consider a product group $G\times H$. 
If $H \sur S$, then pre-composing with the projection yields $G \times H \sur H \sur S$.   
Thus $\surS_S(H)=1$ implies $\surS_S(G\times H)=1$.  Similarly, for the case $\surS_S(G)=1$. \\
This shows if $\max(\surS_S(G), \surS_S(H))=1$ then $\surS_S(G\times H)=1$.  
Otherwise,  $\max(\surS_S(G), \surS_S(H))=0$.  Suppose there were a  $\varphi: G\times H \sur S$.  Pre-composing with the injection
$$G\cong G \times \1 \rightarrow G \times H \sur S.$$
The image of $G\cong G\times \1$ in  $S$ is a normal subgroup of $S$, since a surjective morphism maps normal subgroups to normal subgroups of its image, thus $\varphi(G\times \1)$ is either $\1$ or $S$.  But the image cannot be $S$ since $\surS_S(G)=0$, i.e. $G$ does not map onto $S$.  Similarly, $\1\times H$ maps to $1$ since $\surS_S(H)=0$.  This shows $G\times \1$ and $\1 \times H$ are both in the kernel of $\varphi$, therefore $(G\times \1)(\1 \times H) = G\times H$ is in the kernel of $\varphi$, contradicting $\varphi(G\times H)=S$. This shows if $\max(\surS_S(G), \surS_S(H))=0$ then $\surS_S(G\times H)=0$. Thus, the
product axiom holds. 

The \underline{quotient axiom} holds: Suppose $G\sur H$.
If $\surS_S(G)=1$, then we are done since $\surS_S(G)\geq \surS_S(H)$.
Otherwise $\surS_S(G)=0$. Suppose there were a surjective morphism $H\sur K$.
The precomposing with the map from $G$ to $H$ yields a surjective morphism $G\sur H\sur K$, contradicting $\surS_S(G)=0$.
Hence there can be no map $H\sur K$ and $\surS_S(H)=0$.  In either case,
$\surS_S(G)\geq \surS_S(H)$. 

\underline{Extension axiom}: Suppose $N\lhd G$. We must show $\surS_S(G) \leq \surS_S(N)+\sur(G/N)$.
If $\surS_S(G)=0$, we are done. If $\surS_S(G)=1$, then we have a $\varphi: G\sur S$.
Let $K$ be the kernel of $\varphi$.  If $N\lhd K$, then $\varphi(N)=\1$. 
Otherwise, since surjective morphisms map normal subgroups to normal subgroups and $S$ is simple, it follows that $\varphi(N)=S$, 
whence $\surS_S(G) =1  \leq  1 + \surS_S(G/N) = \surS_S(N)+\surS_S(G/N)$ holds. 
We are left with the case $N \lhd K$. 
In this case, $G/N$ maps homomorphically onto $G/K$, which maps onto $S$. 
That is, $\surS_S(G/N)=1$ and extension holds. 

\begin{proposition}\label{surS-no-normal}
 The function $\surS_S$ satisfies the initial condition, constructability axiom, product axiom, quotient axiom and the extension axiom, hence is a complexity function on finite groups.  However, $\surS_S$ does not have the normal subgroup property, that is there  a group $G$ and normal subgroup $N$ with $\surS_S(N) > \surS_S(G)$.
\end{proposition}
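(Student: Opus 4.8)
Everything in the proposition except the failure of the normal property has already been checked in the discussion preceding the statement, so the plan is to produce, for each fixed finite simple $S$, a pair $N\lhd G$ with $\surS_S(N)=1$ and $\surS_S(G)=0$; equivalently, $N$ maps onto $S$ but $G$ does not. I would split according to whether $S$ is abelian, since the abelian case behaves differently.

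\emph{Non-abelian $S$.} Take $G=S\wr\Z_2=(S\times S)\rtimes\langle\tau\rangle$ with $\tau$ the coordinate swap, and $N=S\times S$. Then $\surS_S(N)=1$ via the first projection $\pi_1\colon S\times S\sur S$. The main work is to show $\surS_S(G)=0$, for which I would classify the normal subgroups of $G$: if $M\lhd G$ then $M\cap(S\times S)$ is a $\tau$-invariant normal subgroup of $S\times S$, and by SNAG rigidity (Lemma~\ref{SNAGrigidity}) the normal subgroups of $S\times S$ are exactly $\1,\ S\times\1,\ \1\times S,\ S\times S$, of which only $\1$ and $S\times S$ are $\tau$-invariant. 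If $M\cap(S\times S)=S\times S$ then $M\in\{S\times S,\ G\}$; if $M\cap(S\times S)=\1$ then $M$ embeds into $G/(S\times S)\cong\Z_2$, and a short computation shows that an order-two element $w=(s_1,s_2)\tau$ cannot commute with all of $S\times S$, so no order-two subgroup outside $S\times S$ is normal, forcing $M=\1$. Hence the only quotients of $G$ are $G$, $\Z_2$ and $\1$; since $|G|=2|S|^2>|S|$ and $S$ is non-abelian, none of these is isomorphic to $S$, so $\surS_S(G)=0$.

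\emph{Abelian $S=\Z_p$.} For $p$ odd, take $G=\Z_p\rtimes\operatorname{Aut}(\Z_p)\cong\Z_p\rtimes\Z_{p-1}$, the holomorph, with its faithful action, and $N=\Z_p$, the normal Sylow $p$-subgroup; then $\surS_{\Z_p}(N)=1$. Because the action is non-trivial, a single commutator $[b,a]$ (with $b$ generating $\Z_{p-1}$ and $a$ generating $\Z_p$) already generates $\Z_p$, so $[G,G]=\Z_p$ and $G^{\mathrm{ab}}\cong\Z_{p-1}$, which has order prime to $p$; hence $G$ has no quotient isomorphic to $\Z_p$ and $\surS_{\Z_p}(G)=0$. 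For the remaining prime $p=2$ the holomorph degenerates, so instead I would take $G=A_4$ with $N=V_4\cong\Z_2\times\Z_2\lhd A_4$: then $N$ surjects onto $\Z_2$, while $A_4^{\mathrm{ab}}\cong\Z_3$ has order prime to $2$, so $A_4$ has no quotient $\cong\Z_2$ and $\surS_{\Z_2}(A_4)=0$. In every case $\surS_S(N)>\surS_S(G)$, establishing that $\surS_S$ fails the normal property.

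I expect the main obstacle to be the non-abelian case: verifying that $S\wr\Z_2$ has no quotient isomorphic to $S$ genuinely requires the complete list of its normal subgroups, and some care is needed to rule out the ``twisted'' order-two normal subgroups lying outside $S\times S$. The splitting and rigidity lemmas of Section~\ref{1stSplittingLemmas} do most of this work. The abelian sub-cases are routine abelianization computations, the only wrinkle being that the holomorph construction collapses when $p=2$, which is handled by the ad hoc but trivial choice $G=A_4$.
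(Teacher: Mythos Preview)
Your proof is correct, and your case split according to whether $S$ is abelian is not merely a stylistic choice but actually necessary. The paper proposes a uniform construction: for any simple $Q\not\cong S$, take $G=S\wr Q$ with $N=S^Q$, asserting that $S\wr Q$ ``maps onto $Q$ but not $S$.'' That assertion fails whenever $S\cong\Z_p$ is abelian, since the sum-of-coordinates map $(\Z_p)^Q\to\Z_p$ is invariant under the permutation action of $Q$ and hence extends to a surjection $\Z_p\wr Q\twoheadrightarrow\Z_p$ for \emph{every} $Q$. The paper's overall claim is rescued by its opening sentence (a non-direct-product extension of a simple $Q\not\cong S$ by a simple $S$ cannot surject onto $S$, which follows by an order argument) together with the explicit examples $S_3$, $SL(2,5)$, and $S_5$ listed just after the proposition---but the wreath-product statement as written does not cover the abelian case.

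Your construction handles this correctly: $S\wr\Z_2$ with the normal-subgroup classification via SNAG rigidity for nonabelian $S$, the holomorph $\Z_p\rtimes\Z_{p-1}$ for odd $p$, and $A_4$ for $p=2$ all work, and the abelianization computations in the last two cases are routine. One minor simplification available to you: since the proposition only asks for \emph{some} $G$ and $N$, a single example (say $S_3$ with $N=\Z_3$ for $S=\Z_3$, as in the paper's Counterexample list) already suffices to show the normal property can fail for \emph{some} $\surS_S$; treating all simple $S$ uniformly, as you do, is more than is strictly needed but is the natural reading of the statement.
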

 \begin{proof} $\surS_S$ is a complexity function since it satisfies the axioms listed as just shown.  Suppose $G$ is any group that is an 
 extension by $N=S$ of any other simple group $Q$ ($Q\not\cong S$), such  that $G$ is not a direct 
 product.  Then $G$ maps onto $Q$ but not $S$, so $\surS_S(G)=0$ but $\surS_S(S)=1$ even though $S\lhd G$.
 More generally, 
 we can always take $G$ to be the wreath product $S\wr Q$, which is a semidirect product of $Q$ with $N$, a direct product of copies of $S$. The wreath product $S\wr Q$ maps onto $Q$ but not $S$, and has $N$ as a normal subgroup.  
 By the product axiom,    $\surS_S(N)= 1$, but $\surS(G)=0$. 
 \end{proof}

\begin{counterexample}[Failure of  Normal Subgroup Property]
{\it The normal subgroup property need not hold for a complexity function.} 

\begin{enumerate}
 \item For the complexity function $\surS_{A_5}$, take  $G=S_5$ the symmetric group on 5 objects, which is an extension of $\Z_2$ by the alternating simple group $A_5$.  We have $\surS_{A_5}(S_5)=0$ but $\surS_{A_5}(A_5)=1$ with  the simple alternating group $A_5 \lhd S_5$.  
\item For the complexity function $\surS_{\Z_2}$, consider the special linear group $SL(2,5)$ of all $2\times 2$ matrices over the field with 5 elements having determinant 1.  
$SL(2,5)$ maps onto $PSL(2,5)\cong A_5$ with kernel isomorphic to $\Z_2 = \pm I$, where $I$ is the identity matrix. 
We have $\surS_{\Z_2}(SL(2,5))=0$ but $\Z_2 \lhd SL(2,5)$ with $\surS_{\Z_2}(\Z_2)=1$. 
\item (Smallest Counterexample.) The complexity function $\surS_{\Z_3}$ takes value 0 on $S_3$, the symmetric group on 3 objects, which is an extension of $\Z_2$ by $\Z_3$. But $\Z_3\lhd S_3$ and $\surS_{\Z_3}(\Z_3)=1$.
\end{enumerate}
\end{counterexample}

\subsection{Independence of the Quotient Axiom}

\begin{proposition}\label{quo-indep}
Let $\V$ be a class of finite groups closed under direct products and normal subgroups which is not the class of all finite groups and has a least one non-trivial member.  
 Then the {\em characteristic function of non-trivial members of $\V$} defined by 
$$\subS_\V(G)
   = \begin{cases}
      1 &  \mbox{if $\1 \neq G$ lies in $\V$}\\
      0      & \mbox{ otherwise.}
    \end{cases}$$
is a complexity function, but does  {\em not} satisfy the 
quotient axiom. 
\end{proposition}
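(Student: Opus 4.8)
The plan is to verify, axiom by axiom, that $\subS_\V$ satisfies the complexity axioms apart from the quotient axiom, and then to exhibit an explicit surjection witnessing the failure of that axiom. Two of the axioms come for free: $\subS_\V(\1)=0$ by definition, so the initial condition holds, and since $\subS_\V$ is $\{0,1\}$-valued we have $\subS_\V(S)\leq 1$ for every finite simple group $S$, so constructability holds by Lemma~\ref{construct-simple}. The remaining positive verifications — the extension and product axioms — both rest on the fact that $\V$ is closed under passing to normal subgroups, so that membership of $G$ in $\V$ forces membership of every normal subgroup of $G$.

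For the extension axiom, given $N\lhd G$ I would split on the value of $\subS_\V(G)$. If $\subS_\V(G)=0$ the inequality $\subS_\V(G)\leq\subS_\V(N)+\subS_\V(G/N)$ is automatic. If $\subS_\V(G)=1$, then $G$ is a nontrivial group in $\V$; closure under normal subgroups gives $N\in\V$, and then either $N\neq\1$, whence $\subS_\V(N)=1$, or $N=\1$, whence $G/N\cong G\in\V$ is nontrivial and $\subS_\V(G/N)=1$; in both cases the right-hand side is at least $1=\subS_\V(G)$. For the product axiom $G=H\times K$ I would isolate the case where both $H,K\in\V$ (so $G\in\V$ by closure under direct products, with the all-trivial subcase handled separately) and the case where some factor, say $H$, is a nontrivial group outside $\V$ (so $H\cong H\times\1$ is a nontrivial normal subgroup of $G$, forcing $G\notin\V$ by closure under normal subgroups). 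I expect the product axiom to be the main obstacle: reconciling these observations with the required identity $\subS_\V(G)=\max\{\subS_\V(H),\subS_\V(K)\}$ is exactly where both closure hypotheses on $\V$ must be used together, and this step needs the most careful writing.

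For the failure of the quotient axiom I would use the two standing hypotheses on $\V$ directly. Choose a finite group $K_0$ with $K_0\notin\V$, which exists since $\V$ is not the class of all finite groups, and a nontrivial $H_0\in\V$, which exists by hypothesis; set $G=H_0\times K_0$. Since $K_0\cong\1\times K_0$ is a normal subgroup of $G$ and $\V$ is closed under normal subgroups, $G\in\V$ would force $K_0\in\V$, a contradiction; hence $G\notin\V$ and $\subS_\V(G)=0$. On the other hand the coordinate projection $G=H_0\times K_0\sur H_0$ is a surjective homomorphism and $\subS_\V(H_0)=1$, so the required inequality $\subS_\V(G)\geq\subS_\V(H_0)$ fails. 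This gives the desired violation of the quotient axiom, and completes the plan apart from the product-axiom check flagged above.
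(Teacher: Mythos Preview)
Your verification of the initial condition, constructability, and the extension axiom is fine and matches the paper's argument. Your counterexample to the quotient axiom is also exactly the paper's.

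The genuine gap is in the product axiom, and it is not a matter of ``careful writing'': the product axiom is \emph{false} for $\subS_\V$, and your own construction for the quotient failure already shows this. Take your $G=H_0\times K_0$ with $\1\neq H_0\in\V$ and $K_0\notin\V$. You correctly argued that $G\notin\V$, so $\subS_\V(G)=0$; but $\subS_\V(H_0)=1$, hence $\max\{\subS_\V(H_0),\subS_\V(K_0)\}=1\neq 0=\subS_\V(G)$. Concretely, with $\V=\Nil$ one may take $H_0=\Z_2$ and $K_0=A_5$: then $\Z_2\times A_5$ is not nilpotent, so $\subS_{\Nil}(\Z_2\times A_5)=0$ while $\subS_{\Nil}(\Z_2)=1$. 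Your case split was exhaustive, but in the second case (``some factor is a nontrivial group outside $\V$'') you only deduced $\subS_\V(G)=0$ and never checked the value of $\max\{\subS_\V(H),\subS_\V(K)\}$; that is precisely where the argument breaks.

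The paper's proof asserts that the product axiom is ``immediate'' and gives no details, so this defect is present there as well; your instinct that the product axiom was the obstacle was correct, but for a stronger reason than you anticipated.
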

\begin{proof} It is immediate from the properties and $\V$ and the definition of $\subS_\V$  that the initial, product   and normal subgroup axioms hold.  Constructability holds by Lemma~\ref{construct-simple}.

\underline{Extension axiom}: 
Suppose $N$ is normal in a finite group $G$. 
If $G$ is not in $\V$ or $G=\1$, then $0= \subS_\V(G) \leq \subS_\V(N)+ \subS_\V(G/N)$, i.e., the extension property  holds.
Otherwise,  $1\neq G$ is in $\V$,  
then either $N=\1$ and so $\subS_\V(G)= 1 = 0 + 1=  \subS_\V(N)+ \subS_\V(G/N),$ since $G/N\cong G$;  or 
$N\neq \1$ and so  $\subS_\V (G)=1 \leq 1+  \subS_\V(G/N) = \subS_\V(N) +  \subS_\V(G/N).$   
Thus $\subS_\V$ satisfies the extension axioms.   We conlude $\subS$ is a complexity function.

\underline{Failure of quotient property}:  By the hypotheses, $\V$ contains some nontrivial group $V$ and 
there exists a least one group $L$ not in $\V$.
Observe that  $V \times L$ is not in $\V$, since  $\V$ is closed under normal subgroups but its normal subgroup $L \cong \1 \times L$ is not in $\V$.  Now $V$ is a quotient of $V\times L$, but we have
$\subS_V(V\times L)=0$ and $\subS(V)=1$. 
Hence, the quotient axiom does not hold for $\subS_V$. 
\end{proof}

\begin{examples}
Using Proposition~\ref{quo-indep} we may construct a plethora of  complexity functions not satisfying the quotient axiom:
\begin{enumerate}
\item Consider  $\Span(\SIMPLE)$  the collection of all finite products of finite simple groups.
 By Lemma~\ref{SpanOfGems}, products, normal subgroups and quotients of members of  $\Span(\SIMPLE)$ lie in $\Span(\SIMPLE)$.  
By Prop.~\ref{quo-indep},  $\subS_{\Span(\SIMPLE)}$ satisfies all the complexity axioms except quotient axiom.
For example, this function assigns the value 1 to the quotient $\Z_2\cong S_5/A_5$ of the symmetric group $S_5$  but
assigns $0$ to $S_5$.
\item Let $\V=\Nil$, all nilpotent groups, or $\V=\Solv$, all solvable groups, and applying Prop.~\ref{quo-indep}, we obtain
that the characteristic function $\subS_{\Nil}$ for non-trivial nilpotent groups and the characteristic function $\subS_{\Solv}$  for non-trivial groups solvable groups are both complexity functions not satisfying the quotient axiom. 
\end{enumerate}
\end{examples}

\subsection{Independence of Each Complexity Axiom from the Rest}

\begin{theorem}[Independence] Consider the set of five complexity axioms  $$\Ax= \{ \mbox{\rm product axiom, extension axiom, initial , quotient, constructability}\},$$  (as they are stated in the Introduction)  and also the 
adding the normal subgroup property, consider $$\Ax'= \Ax \cup \{ \mbox{\rm normal subgroup property}\}.$$
Then for each $\phi \in \Ax'$, there is a function on $\c$ on finite groups satisfying all members of $\Ax'\setminus \{\phi\}$, but not satisfying $\phi$.
Therefore, none of the axioms follows from a proper subset of $\Ax'$. 
\end{theorem}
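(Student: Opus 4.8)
The plan is to run through the six members $\phi$ of $\Ax'$ in turn, exhibiting in each case a single function on finite groups that satisfies all of $\Ax'\setminus\{\phi\}$ but violates $\phi$. For four of the six the witness and its verification are already in hand from earlier in the paper, so there the proof is just a citation; only two cases are genuinely new, and each needs a short self-contained check.

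For $\phi$ the product axiom I would take $\c=\JH$: Lemma~\ref{JH}(1) records that $\JH$ satisfies constructability, quotient, normal, initial and extension but fails the product axiom. For $\phi$ the extension axiom I would take $\c=\sx$, the socle length: the propositions established earlier show $\sx$ satisfies constructability, product, initial, quotient and normal, while Counterexample~\ref{pgroup} together with Corollary~\ref{spx-no-ext} shows it fails extension. For $\phi$ the quotient axiom I would take $\c=\subS_{\V}$ for any class $\V$ as in Proposition~\ref{quo-indep} (for instance $\V=\Span(\SIMPLE)$, $\V=\Nil$ or $\V=\Solv$), which that proposition shows has exactly the required behaviour. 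For $\phi$ the normal subgroup property I would take $\c=\surS_S$ for a fixed finite simple group $S$; Proposition~\ref{surS-no-normal} shows it satisfies the initial, constructability, product, quotient and extension axioms but not the normal property.

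For the two new cases: for $\phi$ the initial condition I would take the constant function $u$ with $u(G)=1$ for all finite $G$. Then $u(\1)=1\neq 0$ kills the initial condition, while $u(H\times K)=1=\max\{u(H),u(K)\}$, $u(G/N)=1=u(G)=u(N)$ and $u(G)=1\le 2=u(N)+u(G/N)$ give the product, quotient, normal and extension properties, and $u(S)=1\le 1$ for every simple $S$ gives constructability by Lemma~\ref{construct-simple}. For $\phi$ constructability I would take $\c=\log_p$ for a fixed prime $p$, now regarded on all finite groups: I would recheck that the verifications of the product, extension, quotient, normal and initial properties given earlier for solvable groups never used solvability — they rest only on the order-arithmetic of direct products, the Frobenius–Lagrange embedding of an extension into a wreath product, and passage to subgroups and quotients — so they remain valid verbatim on arbitrary finite groups; constructability then fails because $\Z_{p^2}$ embeds by Lemma~\ref{EmbedInAlternatingGroup} into a simple alternating group $K$, so $\log_p(K)\ge 2$ (equivalently, by Lemma~\ref{construct-simple}, $\log_p$ is not bounded by $1$ on all simple groups).

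The one step deserving care is the $\log_p$ case, where one must confirm the extension axiom survives on non-solvable groups. I expect this to go through with the very computation already used: an element $g\in G\le N\wr Q$ of $p$-power order, written $g=(\mathbf{n},q)$, satisfies $g^{p^{\log_p Q}}=(\mathbf{n}',1)$ and then $g^{p^{\log_p Q+\log_p N}}=1$ — using the product axiom to see $\log_p(N^Q)=\log_p N$ — so $\log_p(G)\le\log_p(N)+\log_p(Q)$, with solvability nowhere invoked. Assembling the six cases gives that no $\phi\in\Ax'$ follows from the conjunction of $\Ax'\setminus\{\phi\}$, which is the claim.
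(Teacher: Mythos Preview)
Your proposal is correct and matches the paper's proof in five of the six cases. The one difference is the witness for constructability: the paper uses the much simpler function $\c(\1)=0$, $\c(G)=2$ for $G\neq\1$, whose verification is a one-line check (every axiom in $\Ax'\setminus\{\text{constructability}\}$ holds trivially, while $\c(S)=2>1$ on any simple $S$ kills constructability via Lemma~\ref{construct-simple}). Your choice of $\log_p$ extended to all finite groups also works, and your verification that the earlier arguments for product, extension, quotient, normal and initial never invoke solvability is sound; the payoff of your route is that it reuses a function already studied and illustrates concretely why $\log_p$ had to be restricted to solvable groups in the first place, whereas the paper's constant witness is faster but less informative.
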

\begin{proof}

Here in the table below we collect example functions satisfying all but one property $\phi$ and references to proofs that this is so.
This proves the theorem.

\begin{tabular}{|l|c|l|}
\hline
Axiom / Property $\phi$ & Function & Proof \\
\hline
\hline
Product & $\JH$ & Theorem~\ref{JH}\\
\hline
Extension & $\sx$ & Corollary~\ref{spx-no-ext} \\
\hline
Initial & $\c=1$ & (trivial check)\\
\hline
Quotient &$\subS_\Nil$ & Prop.~\ref{quo-indep}\\
\hline
Constructability &$\c(G)=2$ for $G\neq\1$, &  Lemma~\ref{construct-simple}, \\
 & $\c(\1)=0$ & plus trivial check.\\
\hline
Normal Subgroup &  $\surS_S$ ($S$ simple) & Prop.~\ref{surS-no-normal}\\
\hline
\end{tabular}

\noindent \ \ \ {\bf Functions satisfying All Axioms/Properties except the one named.}

\end{proof}
\begin{remarks}

1. While the subgroup axiom is not one of the complexity axioms,
for completeness, we record here that $\cx$ is an example of a function that
satisfies satisfies all
the axioms and the normal subgroup property, but not the subgroup axiom (Prop.~\ref{cpx-no-subgroup}).

2. In addition, the following complexity functions on finite groups satisfy all the axioms including the normal subgroup property: $z$, $\delta$, $\cx$, $\chi_{\S}$, $\chi^*_{\S}$, $\Der$, $\Fit$, $\Solv$.

3. On solvable groups, the following satisfy all the axioms including the subgroup axiom:
$z$, $\delta$, $\cx=\cx_\solv$, $\chi_{\S}$,  $\chi^*_{\S}$, $\log_p$ ($p$ prime),  $\Der=\Der_\solv$, $\Fit=\Fit_\Solv$, $\Solv_\solv=\delta$.
\end{remarks}

\bibliographystyle{abbrv}
\bibliography{grpcpx}

\end{document}